\theoremstyle{definition}
\newtheorem{theorem}{Theorem} 
\newtheorem{problem}{Open Problem}
\newtheorem{remark}{Remark}[section] 
\newtheorem{proposition}{Proposition} 
\title[\textsf{\textbf{A survey of closed self-shrinkers with symmetry}}]{A survey of closed self-shrinkers with symmetry}
\author[]{Gregory Drugan}  
\author[]{Hojoo Lee}  
\author[]{Xuan Hien Nguyen}
 \numberwithin{equation}{section}
\begin{document}

 \begin{abstract}
In this paper, we survey known results on closed self-shrinkers for mean curvature flow and discuss techniques used in recent constructions of closed self-shrinkers with classical rotational symmetry. We also propose new existence and uniqueness problems for closed self-shrinkers with bi-rotational symmetry and provide numerical evidence for the existence of new examples.
 \end{abstract}

 \maketitle

 

 \section{Introduction}

The self-shrinking solitons for mean curvature flow are ancient solutions to the flow that evolve by ``shrinking'' self-similarly about a point. A time-slice for a self-shrinking flow is a hypersurface, called a \emph{self-shrinker}, that satisfies a non-linear second order elliptic equation involving the mean curvature. When the flow shrinks about the origin, the self-shrinker equation for a time-slice $\Sigma$ is $${\Delta}_{ {\mathbf{g}}_{{}_{\Sigma}}  }  \mathbf{X} = \alpha {\mathbf{X}}^{\bot},$$ where $\alpha < 0$ is a constant and ${\Delta}_{ {\mathbf{g}}_{{}_{\Sigma}}  }  \mathbf{X}$ equals to the mean curvature vector for $\Sigma$.

The first variation formula shows that self-shrinkers are minimal hypersurfaces in the Riemmanian manifold $\mathbb{R}^{n+1}$ with conformal metric $$e^{ \frac{ \alpha \vert   \mathbf{X}  \vert^{2} }{n} } \left( {dx_{1}}^{2}+\cdots+{dx_{n+1}}^{2} \right).$$ This geometric variational characterization leads to the reduction that self-shrinkers with rotational or bi-rotational symmetry correspond to geodesics in the plane equipped with induced conformal metrics. So, the existence and uniqueness of closed self-shrinkers with one of these types of symmetry can be reduced to the study of closed geodesics in two dimensional manifolds.

Self-shrinkers play a vital role in the theory of mean curvature flow and admit a number of interesting applications. Husiken's monotonicity formula~\cite[Section 3]{Huisken 1990} shows that self-shrinkers model the asymptotic behavior of mean curvature flow at type I singularities. Self-shrinkers can also be used as barriers to explore different phenomena for solutions to mean curvature flow. For instance, the existence of a self-shrinking torus was recently used in \cite{Drugan Nguyen 2016} to construct initial entire graphs whose mean curvature flow evolves away from the heat flow.

The focus of this survey is on closed self-shrinkers with rotational or bi-rotational symmetry. The paper is organized as follows: 

\begin{itemize}

\item In Section \ref{Section 2}, we introduce the self-shrinker equation and highlight various elliptic and parabolic characterizations of self-shrinkers.  

\item In Section \ref{Section 3}, we discuss existence and uniqueness results for closed self-shrinkers. We begin with the classification of self-shrinking curves in ${\mathbb{R}}^{2}$, including proofs of two geometric conservation laws for self-shrinking curves~\cite{Abresch Langer 1986, Epstein Weinstein 1987, Halldorsson 2012}. Next, we review rigidity results for closed self-shrinkers due to Huisken~\cite{Huisken 1984, Huisken 1990} and Brendle~\cite{Brendle 2016}. Finally, we mention several examples of self-shrinkers with symmetry \cite{Angenent 1992, Drugan 2015, Drugan Kleene 2017, Moller 2011}.

\item In Section \ref{Section 4}, we give a detailed sketch of the recent variational proof for the existence of an embedded, torus self-shrinker with rotational symmetry \cite{Drugan Nguyen preprint}. The proof uses a modified curve shortening flow to find a closed geodesic in the upper half-plane. A new feature of this variational proof is that it comes with an upper bound for the weighted length of the constructed geodesic.

\item In Section \ref{Section 5}, we expand on how the shooting method can be used to construct closed self-shrinkers with rotational symmetry. In the first part of this section, we outline the analysis used in~\cite{Drugan 2015} to construct an immersed sphere self-shrinker with rotational symmetry. Then, we illustrate how the behavior of geodesics for three shooting problems can be used to generate more examples of closed self-shrinkers with rotational symmetry~\cite{Drugan Kleene 2017}. We end this section with a discussion on the role of continuity in the shooting method.

\item In Section \ref{Section 6}, we consider the problem of constructing closed self-shrinkers with bi-rotational symmetry. Here we propose the existence of various bi-rotational ${\mathbb{T}}^{3}$ and ${\mathbb{S}}^{3}$ self-shrinkers in ${\mathbb{R}}^{4}$ and present numerical approximations of their symmetric profile curves. 

\item Finally, in Section \ref{Section 7}, we present a list of old and new open problems on the existence and uniqueness of closed self-shrinkers. 

\end{itemize}



\section{Elliptic and parabolic characterizations of self-shrinkers} \label{Section 2} 

\subsection{Self-shrinker equation}

A hypersurface $\Sigma$ in Euclidean space ${\mathbb{R}}^{n+1}$ is a \emph{self-shrinker} with the constant coefficient $\alpha<0$ when it solves the quasi-linear elliptic partial differential system of second order
 \begin{equation} \label{SS_eqn1}
{\Delta}_{ {\mathbf{g}}_{{}_{\Sigma}}  }  \mathbf{X} = \alpha {\mathbf{X}}^{\bot},
 \end{equation}
where $\mathbf{X}$ is the position vector for $\Sigma$, ${\mathbf{g}}_{{}_{\Sigma}}$ is the induced metric on $\Sigma$, ${\Delta}_{ {\mathbf{g}}_{{}_{\Sigma}} }$ is the Laplace-Beltrami operator for ${\mathbf{g}}_{{}_{\Sigma}}$, and $\mathbf{X}^{\bot}$ is the orthogonal projection of $\mathbf{X}$ into the normal bundle of $\Sigma$. We note that ${\Delta}_{ {\mathbf{g}}_{{}_{\Sigma}}  }  \mathbf{X}$ equals the mean curvature vector $\mathbf{H}$ for $\Sigma$. When we orient $\Sigma$ by a smooth unit normal vector field $\mathbf{N}$ and introduce the mean curvature $H = \mathbf{H} \cdot \mathbf{N} = \left( {\Delta}_{ {\mathbf{g}}_{{}_{\Sigma}}  }  \mathbf{X} \right)  \cdot \mathbf{N}$, we obtain the 
 the scalar partial differential equation
 \begin{equation} \label{SS_eqn2}
 H = \alpha \, {\mathbf{X}} \cdot  \mathbf{N}.
 \end{equation}
Though (\ref{SS_eqn2}) resembles the classical constant mean curvature equation, in general, the standard techniques (such as method of moving planes) do not directly work for self-shrinkers. 

Locally, a self-shrinker may be written as the graph of a function $u: \Omega \subset \mathbb{R}^n \to \mathbb{R}$, where $u$ is a solution to 
 \begin{equation} \label{SS_eqn_local1}
\textrm{div}_{\mathbb{R}^n} \left( \frac{Du}{\sqrt{1 + |Du|^2} } \right) = \alpha \frac{u - x \cdot Du}{\sqrt{1 + |Du|^2} }.
 \end{equation}

\subsection{Ancient solutions for mean curvature flow} 

A self-shrinker $\Sigma$ corresponds to the $t=\left(t_{0} + \frac{1}{2\alpha}\right)$ time slice of a mean curvature flow that shrinks to the origin at the extinction time $t_{0}$. More explicitly, the one parameter family of hypersurfaces $${\Sigma}_{t}:=\sqrt{2 \alpha \left( t - t_{0}  \right) \,} \, {\Sigma}$$ is a solution to mean curvature flow (MCF) 
\begin{equation} \label{MCF_eqn}
\left( \frac{\partial }{\partial t}  \mathbf{X}(\cdot, t) \right)^\bot = \mathbf{H}(\cdot,t),
\end{equation}
for all ancient time $t \in \left(-\infty, t_{0} \right)$. Here $\mathbf{H}(\cdot,t)$ denotes the mean curvature vector for the time slice ${\Sigma}_{t}$. It follows that self-shrinkers correspond to ancient solutions to MCF that evolve over time by homotheties.

\subsection{Monotonicity of the Gaussian area}
 
Consider the backward heat kernel $\rho\left( \mathbf{X}, \, t\right)=\frac{1}{ { \left(4 \pi \left( t_{0} - t\right) \,\right)}^{\frac{n}{2}} } e^{ - \, \frac{ {\vert \mathbf{X} \vert}^{2} }{ 4 \left(t_{0} -t \right)}  }$  for the backward heat equation $f_{t}=-{\Delta}_{{\mathbb{R}}^{n+1}} f$. Huisken~\cite[Section 3]{Huisken 1990} showed that closed hypersurfaces evolving under MCF, for $t < t_{0}$, satisfy
\begin{equation} \label{Huisken's monotonicity}
 \frac{\partial}{\partial t} \int_{\Sigma_t}   \rho   = - \int_{\Sigma_t}  \rho \,    
 { \left\vert \, \mathbf{H} + \frac{1}{2\left(t_{0}-t \, \right)}  {\mathbf{X}}^{\bot}    \right\vert}^{2}  \leq 0.
\end{equation}
It follows from this monotonicity formula that mean curvature flow behaves asymptotically like a self-shrinker at a singularity where the curvature does not blow-up too fast~\cite[Theorem 3.5]{Huisken 1990}. Also, see Hamilton's monotonicity formula \cite{Hamilton 1993} and the local monotonicity formula due to Ecker \cite{Ecker 2004}.

\subsection{Minimality of self-shrinkers}   \label{variational eqns}
The first variation formula for weighted area (Ilmanen's lecture notes \cite[Section 2]{Ilmanen 1998} and Morgan's book \cite[Chapter 18]{Morgan 2009}) shows that self-shrinkers are variational objects. A submanifold $\Sigma$ immersed in a Riemannian manifold $\left(\mathcal{M}, g\right)$ with density  $e^{\Psi}$ is minimal if and only if its weighted mean curvature vector  ${\mathbf{H}}_{\Psi}={\mathbf{H}} - {\left( {\nabla}_{\mathcal{M}} \Psi  \right)}^{\bot}$
vanishes, where $\mathbf{H}$ denotes the mean curvature vector field on $\Sigma$ in $\left(\mathcal{M}, g \right)$ and  $ {\left( {\nabla}_{\mathcal{M}} \Psi  \right)}^{\bot}$ is the orthogonal projection of the vector field ${\nabla}_{\mathcal{M}} \Psi$ into the normal bundle of $\Sigma$. Given a hypersurface $\Sigma$ in ${\mathbb{R}}^{n+1}$ and constant $\alpha<0$, the following three statements are equivalent to each other:

\begin{itemize}
\item[1.] $\Sigma$ is critical for the Gaussian area functional ${\int}_{\Sigma}  \; e^{ \frac{ \alpha \vert   \mathbf{X}  \vert^{2} }{2} } {dvol}_{\Sigma}$.

\item[2.] $\Sigma$ satisfies the Euler-Lagrange equation $ {\Delta}_{  {\mathbf{g}}_{{}_{\Sigma}}  }  \mathbf{X} = {\left[ \, {\nabla}_{{\mathbb{R}}^{n+1}} \left( \frac{  \alpha \, {\vert \mathbf{X} \vert}^{2} }{2} \; \right) \, \right]}^{\bot} = \alpha \mathbf{X}^{\bot}$ for the Gaussian area. 

\item[3.] $\Sigma$ is a minimal submanifold in the Riemmanian manifold ${\mathbb{R}}^{n+1}$ with metric $e^{ \frac{ \alpha \vert   \mathbf{X}  \vert^{2} }{n} } \left( {dx_{1}}^{2}+\cdots+{dx_{n+1}}^{2} \right)$.
\end{itemize}

\subsection{Minimal cones as self-shrinkers}

The Clifford cone ${x_{1}}^{2}+{x_{2}}^{2}={x_{3}}^{2}+{x_{4}}^{2}$ over the Clifford torus in  ${\mathbb{S}}^{3}$ is a self-shrinker in ${\mathbb{R}}^{4}$. More generally, whenever ${\mathcal{S}}^{n-1}$ is a minimal hypersurface in the round hypersphere ${\mathbb{S}}^{n}$, its cone $C\left( \mathcal{S} \right):=\left\{ r \mathbf{p} \, : \, r \in \mathbb{R}, \mathbf{p} \in \mathcal{S} \right\}$ becomes a minimal hypersurface in ${\mathbb{R}}^{n+1}$. Since ${\mathbf{X}} \cdot  \mathbf{N} = 0$ on $C\left( \mathcal{S} \right)$, it follows that a minimal cone is a self-shrinker satisfying $H = 0 =\alpha \, {\mathbf{X}} \cdot  \mathbf{N}$ for any coefficient $\alpha$.

\subsection{Normalization of the coefficient}

Without loss of generality, we can use dilations to normalize the coefficient $\alpha<0$. In the literature, two normalizations $\alpha=-1$ (for instance, \cite{Huisken 1990}) and $\alpha=-\frac{1}{2}$ (for instance, \cite{Colding Minicozzi Pedersen 2015}) are common.  Unless otherwise noted, we take the normalization $\alpha = -\frac{1}{2}$ for the remainder of the survey.


\section{Results on existence and uniqueness of closed self-shrinkers}   \label{Section 3} 

\subsection{Shrinking curves in the plane}

In 1956, Mullins \cite{Mullins 1956} introduced the one-dimensional mean curvature flow, \emph{the curve shortening flow}, in $\mathbb{R}^2$ and constructed examples of solitons for the flow. In 1986, Gage and Hamilton \cite{Gage Hamilton 1986} solved the shrinking conjecture by showing 
that a convex curve collapses to a \emph{round point} under the curve shortening flow. The curve remains convex and becomes  \emph{circular} as it shrinks, in the sense that the ratio of the inscribed radius to the circumscribed radius approaches $1$, the ratio of the maximum curvature to the minimum curvature approaches $1$, and the higher order derivatives of the curvature converge to $0$ uniformly.

In 1987, Grayson \cite{Grayson 1987} proved the striking result that an embedded, not necessarily convex, closed curve eventually becomes convex, and therefore it eventually contracts to a round point, under the curve shortening flow.  In 1998, Huisken gave a concise proof \cite{Huisken 1998} of Grayson's Theorem, using a distance comparison argument and the classification and characterization of solitons as asymptotic models for singularities. We refer the interested reader to recent proofs by Andrews-Bryan \cite{Andrews Bryan 2011a, Andrews Bryan 2011b} and Magni-Mantegazza \cite{Magni Mantegazza 2014}.

In the mid 1980's, Abresch-Langer~\cite{Abresch Langer 1986} and Epstein-Weinstein~\cite{Epstein Weinstein 1987} independently investigated the self-shrinking solitons for the curve shortening flow.  Here are numerical approximations of some of self-shrinkers:
 \begin{figure}[H]
 \centering
 \includegraphics[height=4cm]{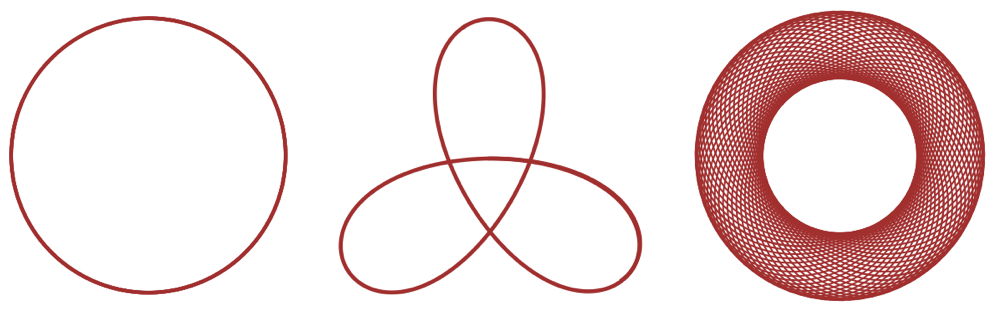}
 \caption{Examples of self-shrinkers for the curve shortening flow.}
 \end{figure}
 Unlike higher dimensional cases, the one-dimensional self-shrinker equation admits explicit first integrals. 
 
 \begin{theorem}[\textbf{Geometric conservation laws for self-shrinking curves}, \cite{Abresch Langer 1986, Epstein Weinstein 1987, Halldorsson 2012}]
  Let the function $\kappa$ denote the curvature of an immersed non-flat self-shrinker with the coefficient $\alpha<0$ in the $xy$-plane.
 \begin{enumerate}
 \item[1.] The quantity $\kappa \, e^{ \alpha  \left(   \frac{{x}^{2}+{y}^{2}}{2}   \right) }$ is constant. Hence, the curvature $\kappa$ is an 
 increasing function of the radius $\sqrt{x^2 +y^2}$.
 \item[2.] The entropy ${\kappa}^{2} +  \alpha \ln \left( {\kappa}^{2} \right)+ {\left(\frac{d\kappa}{d\theta} \right)}^{2}$ is constant, 
 where $\theta$ is the angle between the tangent vector and the $x$-axis.
 \end{enumerate}
 \end{theorem}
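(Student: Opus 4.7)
The plan is to work from the scalar form $\kappa = \alpha\,\mathbf{X}\cdot\mathbf{N}$ of the self-shrinker equation \eqref{SS_eqn2} in the $xy$-plane and extract both conservation laws by differentiation along the curve. Parametrize the curve by arclength $s$, let $\theta(s)$ be the angle that the unit tangent $\mathbf{T}=(\cos\theta,\sin\theta)$ makes with the $x$-axis, and let $\mathbf{N}=(-\sin\theta,\cos\theta)$, so that $\kappa=d\theta/ds$. The key preliminary identity, obtained by differentiating $\kappa = \alpha\,\mathbf{X}\cdot\mathbf{N}$ and using the Frenet formulas $d\mathbf{X}/ds = \mathbf{T}$ and $d\mathbf{N}/ds=-\kappa\mathbf{T}$, is
\begin{equation*}
\frac{d\kappa}{ds} \;=\; -\alpha\,\kappa\,\mathbf{X}\cdot\mathbf{T}.
\end{equation*}
This single identity will drive both parts.

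For part (1), I simply differentiate $F(s)=\kappa\,e^{\alpha(x^2+y^2)/2}$ along the curve. Since $\tfrac{d}{ds}(x^2+y^2)=2\,\mathbf{X}\cdot\mathbf{T}$, the two terms in $F'(s)$ cancel by the preliminary identity, giving $F'\equiv 0$. The monotonicity of $\kappa$ in $r=\sqrt{x^2+y^2}$ is an immediate consequence, since $\alpha<0$.

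For part (2), I use the first conservation law to trade $|\mathbf{X}|^2$ for $\ln\kappa$: writing the constant as $c$, we get $|\mathbf{X}|^2 = \tfrac{2}{\alpha}\ln(c/\kappa)$. Then I decompose $|\mathbf{X}|^2 = (\mathbf{X}\cdot\mathbf{T})^2+(\mathbf{X}\cdot\mathbf{N})^2$, substitute $\mathbf{X}\cdot\mathbf{N} = \kappa/\alpha$ from \eqref{SS_eqn2} and $(\mathbf{X}\cdot\mathbf{T})^2 = \alpha^{-2}\kappa^{-2}(d\kappa/ds)^2$ from the preliminary identity, and convert to $\theta$ via $d\kappa/ds = \kappa\,d\kappa/d\theta$. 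Multiplying through by $\alpha^2$ and rearranging turns the relation into
\begin{equation*}
\kappa^2 \;+\; \alpha\ln(\kappa^2) \;+\; \left(\frac{d\kappa}{d\theta}\right)^{\!2} \;=\; 2\alpha\ln c,
\end{equation*}
which is the claimed invariant.

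I do not anticipate a substantive obstacle: both identities reduce to one-line arclength differentiations once the Frenet setup and the identity $d\kappa/ds=-\alpha\kappa\,\mathbf{X}\cdot\mathbf{T}$ are in place. The only mildly delicate point is handling the $\ln(\kappa^2)$ term in part (2), which forces a choice of sign convention for $\kappa$; the formulation is invariant under $\kappa\mapsto-\kappa$, so this issue is cosmetic, and on any non-flat self-shrinker $\kappa$ cannot vanish on an open set (since $\kappa \equiv 0$ forces a line through the origin), making the logarithm well-defined on the open set where conservation is verified.
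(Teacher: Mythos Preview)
Your argument is correct. For part~(1) it is essentially the paper's proof in different notation: the paper writes $\tau=\mathbf{X}\cdot\mathbf{T}$, $\nu=\mathbf{X}\cdot\mathbf{N}$, records the structure equation $d\nu/ds=-\kappa\tau$, and combines it with $\kappa=\alpha\nu$ to get exactly your preliminary identity $d\kappa/ds=-\alpha\kappa\tau$, then differentiates $\kappa\,e^{\alpha(\tau^2+\nu^2)/2}$ just as you do.

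Part~(2), however, is a genuinely different route. The paper verifies the entropy law by direct $\theta$-differentiation: it first derives the second-order ODE $\kappa_{\theta\theta}+\kappa+\alpha/\kappa=0$ from the structure equations (equivalently, from $1/\kappa=-\nu_{\theta\theta}-\nu$ and $\kappa=\alpha\nu$), and then observes that $\frac{d}{d\theta}\bigl[\kappa_\theta^{\,2}+\kappa^2+\alpha\ln(\kappa^2)\bigr]=2\kappa_\theta\bigl(\kappa_{\theta\theta}+\kappa+\alpha/\kappa\bigr)=0$. You instead deduce the entropy law \emph{algebraically} from the first conservation law: solving $\kappa\,e^{\alpha|\mathbf{X}|^2/2}=c$ for $|\mathbf{X}|^2$, decomposing $|\mathbf{X}|^2=(\mathbf{X}\cdot\mathbf{T})^2+(\mathbf{X}\cdot\mathbf{N})^2$, and substituting $\mathbf{X}\cdot\mathbf{N}=\kappa/\alpha$ and $\mathbf{X}\cdot\mathbf{T}=-(\alpha\kappa)^{-1}\,d\kappa/ds=-\alpha^{-1}\,d\kappa/d\theta$. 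Your approach makes transparent that the second invariant is not independent of the first but is its rewriting in the orthonormal frame $(\mathbf{T},\mathbf{N})$, and it also identifies the constant as $2\alpha\ln|c|=\alpha\ln(c^2)$; the paper's approach has the virtue of exhibiting the underlying second-order ODE for $\kappa(\theta)$, which is the starting point for the Abresch--Langer classification.
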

  
 \begin{proof} 
 Let ${\mathbf{X}}(s)=\left( x(s), y(s) \right)$ denote an immersed non-flat 
 self-shrinker parameterized by an arc length $s$. We introduce the angle function $\theta(s)$ between the unit tangent vector $T(s)=\frac{d{\mathbf{X}}}{ds}$ and the $x$-axis, 
 the unit normal vector $\mathbf{N}(s) = \mathrm{J} \, \mathbf{T}(s)$ (with the $\frac{\pi}{2}$-rotation $ \mathrm{J}$), the signed curvature $\kappa(s)=\frac{d\theta}{ds}$,
 tangential support function $\tau(s)={\mathbf{X}}(s) \cdot  \mathbf{T}(s)$, and normal support function $\nu(s)={\mathbf{X}}(s) \cdot  \mathbf{N}(s)$. 
Combining the self-shrinker equation $ \kappa (s) = \alpha \, \nu(s)$ with the coefficient $\alpha<0$ 
 and the structure equations for the curve 
\begin{equation} \label{structure 01}
\frac{d \tau}{ds} = 1+ \kappa \nu, \quad \frac{d \nu}{ds} = -\kappa \tau, \quad \frac{d}{ds} \left( \frac{{\tau}^{2}+{\nu}^{2}}{2} \right) = \tau, \quad x^2 +y^2 = {\tau}^{2}+{\nu}^{2}
\end{equation}
implies the conservation law
\[
    \frac{d}{ds} \left[    \kappa \, e^{ \alpha  \left(   \frac{{x}^{2}+{y}^{2}}{2}   \right) } \right]
 = \frac{1}{\alpha} \, \frac{d}{ds} \left( \nu \, e^{ \alpha  \left(   \frac{{\tau}^{2}+{\nu}^{2}}{2}    \right) } \right) =  \frac{1}{\alpha}   \left( -\kappa + \alpha \nu \right) \, \tau \, e^{  \alpha  \left(    \frac{{\tau}^{2}+{\nu}^{2}}{2}   \right)  } =0,
\]
which guarantees that the geometric quantity $\kappa \, e^{ \alpha  \left(   \frac{{x}^{2}+{y}^{2}}{2}   \right) }$ is constant on the self-shrinker (\cite[Lemma 5.5]{Halldorsson 2012} for $\alpha=-1$ and \cite[Theorem A]{Abresch Langer 1986}).
Reading the first two structure equations (\ref{structure 01}) with respect to the angle $\theta$ yields 
\begin{equation} \label{structure 01}
\frac{d \tau}{d\theta} = \frac{1}{\kappa} + \nu, \quad \frac{d \nu}{d \theta} = - \tau, \quad \frac{1}{\kappa} = - {\nu}_{\theta \theta} - \nu.
\end{equation}
Combining these and the self-shrinker equation $ \kappa = \alpha \, \nu$ implies the conservation law (\cite[Section 1]{Epstein Weinstein 1987} for  $\alpha=-1$):
\[
    \frac{d}{d \theta} \left[    {{\kappa}_{\theta}}^{2} + {\kappa}^{2} + \alpha \ln \left( {\kappa}^{2} \right)   \right]
    = 2  {\kappa}_{\theta} \left[ {\kappa}_{\theta \theta} + \kappa + \frac{\alpha}{\kappa}  \right] =  2{\kappa}_{\theta}\,  \alpha   \left[   \left( {\nu}_{\theta \theta} + \nu \right) + \frac{1}{\kappa}  \right] =0.
\]
\end{proof} 

Recently, classification results and conservation laws for self-shrinkers were developed in other geometric contexts by Halldorsson \cite{Halldorsson 2012, Halldorsson 2015} and Chang \cite{Chang 2017}.

\subsection{Rigidity results for self-shrinkers}

Round spheres admit geometric characterizations both as constant mean curvature (CMC) surfaces and as self-shrinkers for the mean curvature flow. The classical theorems of Jellett, Alexandrov, and Hopf show that round spheres posses some rigidity as CMC hypersurfaces.  Jellett's Theorem in  ${\mathbb{R}}^{3}$ and its generalization \cite{Gimeno 2015, Hsiung 1956, Hsiung 1959, Kwong 2016, Montiel 1999}, which uses Hsiung-Minkowski integral formulas \cite{Hsiung 1956, Hsiung 1959}, confirms that a closed, star-shaped, CMC hypersurface is round. 
 Alexandrov used his method of moving planes to prove that an embedded, closed CMC hypersurface in ${\mathbb{R}}^{n+1}$ must be a round sphere. 
 The embedded assumption is essential due to the existence of immersed tori in ${\mathbb{R}}^{3}$ with positive constant mean curvature, see Abresch \cite{Abresch 1987} 
 and Wente \cite{Wente 1986}. Hopf showed if a closed immersed CMC surface in ${\mathbb{R}}^{3}$ is a topological sphere, then it must be round. One of Hopf's two proofs \cite{Hopf 2003} exploits a beautiful fact that the Codazzi equation implies the existence of globally well-defined holomorphic quadratic differential on CMC surfaces. The proof can be generalized to a wider class of surfaces in more general ambient spaces, for instance, as in \cite{Abresch Rosenberg 2004, Bryant 2011, Fraser Schoen 2015}.

Despite the similarity between the CMC and self-shrinker equations, the classical rigidity results of Alexandrov and Hopf for CMC hypersurfaces do not hold for self-shrinkers. (There are examples of an embedded $\mathbb{S}^1 \times \mathbb{S}^{n-1}$ self-shrinker and an immersed, non-round $\mathbb{S}^n$ self-shrinker in $\mathbb{R}^{n+1 \geq 3}$.) In addition, analogues of the classical Weierstrass-Enneper representation (holomorphic resolution of minimal surfaces) or Kenmotsu representation (which prescribes harmonic Gauss map  of CMC surfaces \cite {Kenmotsu 1979}) are not known for self-shrinkers. However, just as in the CMC setting, round spheres do posses some rigidty as closed self-shrinkers.

In 1984, Huisken \cite{Huisken 1984} established that a convex hypersurface in ${\mathbb{R}}^{n+1 \ge 3}$ shrinks to a round point by showing that the hypersurface, under a rescaled flow, converges to a totally umbilical hypersurface. Hence, Huisken's result in ${\mathbb{R}}^{n+1 \ge 3}$ is a higher dimensional analogue of the Gage-Hamilton Theorem for the curve shortening flow in ${\mathbb{R}}^{2}$. Since self-shrinkers keep their shape under the flow, these \emph{parabolic asymptotic convergence results} implicitly imply the \emph{elliptic rigidity result} that a closed, convex self-shrinker in any dimension is round. We highlight two additional rigidity results for closed self-shrinkers:

\begin{itemize}
\item[]
\item \emph{Rigidity of spheres as mean-convex self-shrinkers in ${\mathbb{R}}^{n+1 \ge 3}$}: In 1990, Huisken  \cite[Theorem 4.1]{Huisken 1990} showed that a closed, mean-convex ($H>0$) self-shrinker must be a round sphere. The key starting point in his argument is to combine the self-shrinker equation and Simons' identity for the squared length ${\vert A \vert}^{2}$ of the second fundamental form to obtain an explicit expression for the Laplacian of the well-defined quotient function $\frac{ {\vert A \vert}^{2} }{ H^2 }$. 
Since the self-shrinker is compact, the maximum principle guarantees that $\frac{ {\vert A \vert}^{2} }{ H^2 }$ is constant. Subsequent analysis of the pde 
for $\frac{ {\vert A \vert}^{2} }{ H^2 }$ and Hsiung-Minkowski integral formulas ultimately lead to the conclusion that the mean curvature $H$ is a positive constant. Now, a mean-convex 
self-shrinker has positive support function. Therefore, the self-shrinker is round. See also Montiel's Theorem \cite{Montiel 1999}.

\item[]
\item \emph{Rigidity of spheres as embedded ${\mathbb{S}}^{2}$ self-shrinkers in ${\mathbb{R}}^3$}:  In 2016, Brendle \cite{Brendle 2016} proved the long-standing Alexandrov-Hopf type conjecture showing that an embedded, topological $\mathbb{S}^2$ self-shrinker in ${\mathbb{R}}^3$ must be a round sphere. Unlike in the CMC case, combining the self-shrinker equation with the Codazzi equations does not produce a holomorphic quadratic differential on self-shrinkers, so the Hopf type approach does not directly work for self-shrinkers. The key result in Brendle's proof is that the sign of the normal support function does not change on an embedded ${\mathbb{S}}^{2}$ self-shrinker in ${\mathbb{R}}^3$, i.e., the closed, embedded self-shrinker is star-shaped. Thus, the self-shrinker is mean-convex (by the definition of the self-shrinker equation), and by Huisken's Rigidity Theorem it is a round sphere.  
\end{itemize}

\bigskip

\subsection{Examples of self-shrinkers}
\label{examples}

Though round spheres are rigid as CMC hypersurfaces and self-shrinkers under certain additional assumptions, there are numerous examples that contrast the rigidity results from the previous section. Hsiang-Teng-Yu \cite{Hsiang Teng Yu 1983} proved that there exist infinitely many distinct CMC immersions of ${\mathbb{S}}^{2k-1}$ in ${\mathbb{R}}^{2k \geq 4}$, see also \cite{Hsiang 1982}. These examples come from studying closed hypersurfaces with bi-rotational symmetry. (We consider the problem of constructing closed self-shrinkers with bi-rotational symmetry in Section~\ref{Section 6}.) A rich source of examples of immersed self-shrinkers comes from hypersurfaces with rotational symmetry. Using the shooting method for geodesics (see Section~\ref{Section 5}), an infinite number of complete, self-shrinkers for each of the rotational topological types: $\mathbb{S}^{n}$, $\mathbb{S}^1 \times \mathbb{S}^{n-1}$, $\mathbb{R}^{n}$, and $\mathbb{S}^1 \times \mathbb{S}^{n-1}$ were constructed in~\cite{Drugan Kleene 2017}. 

In the following, we introduce the geodesic equation for the profile curve of a self-shrinker with rotational symmetry and highlight a few modern examples of closed self-shrinkers. We note that even though rotational self-shrinkers have a variational characterization, it is unknown if the geodesic equation is integrable.

\begin{itemize}
\item[]
\item \emph{Profile curves of rotational shrinkers as geodesics in the half-plane}: Self-shrinkers are minimal submanifolds in the Remannian manifold $\mathbb{R}^{n+1 \geq 3}$ equipped with the conformal metric $e^{ - \frac{\vert   \mathbf{X}  \vert^{2} }{2n} } \left( {dx_{1}}^{2}+\cdots+{dx_{n+1}}^{2} \right).$ Applying the first variation formula to a rotational hypersurface $\mathbf{X}(s,\omega) =  \left( x(s), r(s) \omega \right)$, $s \in \mathbb{R}$, $
\omega \in \mathbb{S}^{n-1}$, with the profile curve $(x(s), r(s))$ in the half-plane $\mathbb{H} = \{ (x,r)\in\mathbb{R}^2 \, | \, r>0 \}$, shows that it is a self-shrinker if and only if its profile curve is a geodesic for the conformal metric $$g_{Ang} = r^{2(n-1)} e^{- \frac{x^2 + r^2}{2}} (dx^2 + dr^2).$$ The geodesic equation for the profile curve $(x(s),r(s))$ of a self-shrinker with rotational symmetry is
\[
\frac{x' r'' - x'' r'}{x'^2 + r'^2}  = \left( \frac{n-1}{r}  -\frac{r}{2} \right) x' + \frac{1}{2}xr',
\]
and the Gauss curvature of the Riemannian manifold $(\mathbb{H}, g_{Ang})$ is given by $$K =  \frac{r^2 + (n-1)}{r^{2n}} e^{ \frac{x^2+r^2}{2}} > 0.$$

\item[] 
\item \emph{The fundamental examples of rotational self-shrinkers}: The sphere of radius $\sqrt{2n}$ centered at the origin, a flat plane through origin, and a round cylinder of radius $\sqrt{2(n-1)}$ with axis through the origin are examples of self-shrinkers with rotational symmetry. We have the following profile curves for these fundamental examples:

\begin{itemize}

\item[1.] \emph{Round sphere}: $x^2 + r^2 = 2n$ with the profile curve $(x(s),r(s)) = \left(\sqrt{2n} \cos \left(\frac{s}{\sqrt{2n}}\right) , \sqrt{2n} \sin\left(\frac{s}{\sqrt{2n}}\right)\right)$.

\item[2.] \emph{Flat plane}: $x \equiv 0$ with the profile curve $(x(s),r(s)) = (0,s)$.

\item[3.] \emph{Round cylinder}: $r\equiv \sqrt{2(n-1)}$ with the profile curve $(x(s),r(s)) = (s, \sqrt{2(n-1)})$.
\end{itemize}

\item[] 
\item \emph{Angenent's torus}: Using the \emph{shooting method} for geodesics, Angenent~\cite{Angenent 1992} gave the first proof of the existence of an embedded torus ($\mathbb{S}^1 \times \mathbb{S}^{n-1}$) self-shrinker in $\mathbb{R}^{n+1}$ with a rotational symmetry. See also  \cite{Drugan 2015, Drugan Nguyen preprint,  Moller 2011}.

 \begin{figure}[H]
 \centering
 \includegraphics[height=4cm]{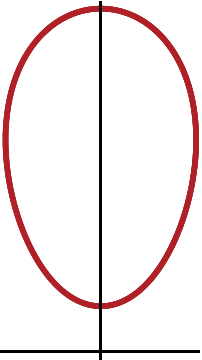}
 \caption{The profile curve whose rotation about the horizontal axis is an embedded torus self-shrinker.}
 \end{figure}

\item[] 
\item \emph{Immersed sphere self-shrinker}: Motivated by Angenent's construction and using the shooting method from the axis of rotation, it was shown in \cite{Drugan 2015} that there exists an immersed and non-embedded $\mathbb{S}^n$ self-shrinker in $\mathbb{R}^{n+1}$ with a rotational symmetry (see Section~\ref{immersed sphere} for a detailed sketch of the proof). The existence of this immersed self-shrinkers explains why the embeddedness assumption is essential in Brendle's rigidity result for embedded $\mathbb{S}^2$ self-shrinkers in $\mathbb{R}^{3}$ \cite{Brendle 2016}.
 \begin{figure}[H]
 \centering
 \includegraphics[height=4cm]{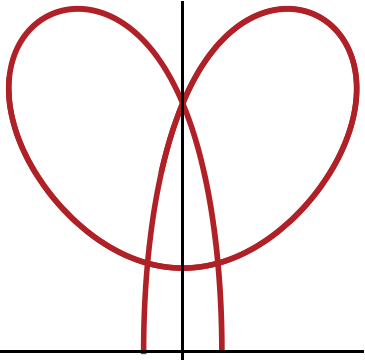}
 \caption{The profile curve whose rotation about the horizontal axis is an immersed sphere self-shrinker.}
 \label{fig:is}
 \end{figure}

\item[] 
\item \emph{Immersed self-shrinkers}: Building on the work in \cite{Angenent 1992, Drugan 2015, Kleene Moller 2014}, infinitely many immersed and non-embedded self-shrinkers for each of the rotational topological types: $\mathbb{S}^n$, $\mathbb{S}^1 \times \mathbb{S}^{n-1}$, $\mathbb{R}^{n}$, and $\mathbb{S}^1 \times \mathbb{R}^{n-1}$ were constructed in~\cite{Drugan Kleene 2017}. The main idea for the construction is to study the behavior of solutions to the geodesic equation near two known self-shrinkers and use continuity arguments to find complete self-shrinkers between them. See Section~\ref{immersed shrinkers} for illustrations on how to carry out this heuristic.

 \begin{figure}[H]
 \centering
 \includegraphics[height=4cm]{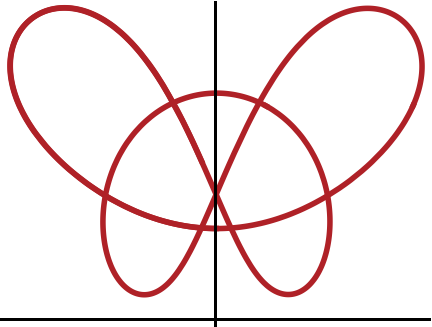}
 \caption{The profile curve whose rotation about the horizontal axis is an immersed torus self-shrinker.}
 \end{figure}
  
\item \emph{M{\o}ller's embedded shrinkers with higher genus in $\mathbb{R}^{3}$}: In 2011, M{\o}ller \cite{Moller 2011} performed a smooth desingularization of two rotational self-shrinkers: Angenent's torus and the round sphere. M{\o}ller's shrinkers are generalizations of Costa's embedded three-end minimal surface \cite{Hoffman 1987}, which can be viewed as a smooth desingularization of two rotational minimal surfaces: a catenoid and a plane passing the neck of the catenoid. More concretely, M{\o}ller proved the existence of a large lower bound $N_{0}$ such that for each even $g = 2k \geq N_0$ there exists a closed self-shrinker ${\Sigma}_{g}$ in $\mathbb{R}^{3}$ with  genus $g$ that is invariant under the dihedral symmetry group with $2g$ elements. Furthermore, the sequence of self-shrinkers ${\Sigma}_{g}$ converges in the Hausdorff sense (and smoothly away from the two initial intersection circles) to the union of Angenent's torus and the round sphere.

 \begin{figure}[H]
 \centering
 \includegraphics[height=4.785cm]{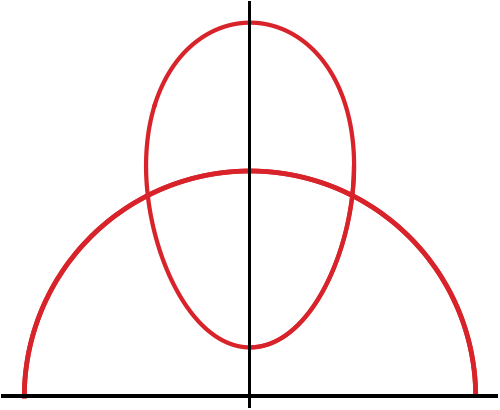}
   \caption{Two intersecting geodesics in M{\o}ller's desingularization of self-shrinking tori and round sphere}
 \end{figure}

\end{itemize}


\section{Variational method for an embedded $\mathbb{S}^1 \times \mathbb{S}^{n-1}$ self-shrinker in $\mathbb{R}^{n+1}$}    \label{Section 4} 
 
In this section, we outline the parabolic proof from \cite{Drugan Nguyen preprint} of the existence of a rotational self-shrinking torus. The proof uses variational techniques, applied to the geodesic problem from Section~\ref{examples}, to find a simple, closed geodesic and gives an estimate for the length of this geodesic in $(\mathbb{H},g_{Ang})$. It is unknown if this proof recovers the closed geodesics constructed in~\cite{Angenent 1992}. (See Section \ref{Section 7}.)  

\begin{theorem}[\cite{Drugan Nguyen preprint}]
\label{thm:variational}
For $n \geq 2$, there exists a simple, closed geodesic $\gamma_\infty$, for the conformal metric $$g_{Ang} = r^{2(n-1)} e^{-(x^2 + r^2)/2} (dx^2 + dr^2)$$ on the half-plane $\mathbb{H} = \{ (x,r) \in \mathbb{R}^2 \, | \, r > 0 \}$.  Moreover, its length $L_n(\gamma_{\infty})$ in the metric $g_{Ang}$ is less than the length of the double cover of the half-line $x=0$: 
	\[
	L_n(\gamma_{\infty}) := \int_{\mathbb{S}^1} r^{n-1} e^{-(x^2 + r^2)/4} \sqrt{x'^2 + r'^2}du 
	< 2 \int_0^\infty s^{n-1} e^{-s^2/4} ds.
	\]
\end{theorem}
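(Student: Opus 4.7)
The plan is to realize $\gamma_\infty$ as the limit of a curve shortening flow on $(\mathbb{H}, g_{Ang})$ applied to a carefully chosen symmetric initial curve. Since $g_{Ang}$ is invariant under the reflection $x \mapsto -x$, the half-line $\{x=0\}$ is a geodesic whose double cover has finite $g_{Ang}$-length equal to $2\int_0^\infty s^{n-1} e^{-s^2/4}\, ds$. I would take the initial curve $\gamma_0$ to be a smooth, $x \mapsto -x$ invariant, simple closed curve that closely approximates the boundary of a thin Euclidean rectangle $[-\epsilon, \epsilon] \times [\epsilon, R]$ enclosing a segment of the $r$-axis. A direct computation shows that for $\epsilon$ sufficiently small and $R$ sufficiently large, $L_n(\gamma_0) < 2\int_0^\infty s^{n-1} e^{-s^2/4}\, ds$; because curve shortening decreases length, this will ultimately deliver the desired upper bound on $L_n(\gamma_\infty)$.

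Next I would evolve $\gamma_0$ by the geodesic curvature flow for $g_{Ang}$, namely $\partial_t \mathbf{X} = \kappa_g \, \mathbf{N}$, and verify three structural properties. First, because $g_{Ang}$ is reflection-symmetric, the flow preserves the $x \mapsto -x$ invariance, so $\gamma_t$ continues to cross the $r$-axis transversally at two symmetric points for all $t$. Second, by the avoidance principle in dimension two, embeddedness is preserved. Third, the weight $r^{n-1} e^{-(x^2+r^2)/4}$ vanishes as $r \to 0$ and decays super-exponentially as $x^2 + r^2 \to \infty$, so barrier arguments using suitable translated round circles force $\gamma_t$ to remain in a fixed compact subregion of $\mathbb{H}$. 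These ingredients yield smooth existence of $\gamma_t$ on $[0, \infty)$, up to the possibility that the curve shrinks to a point.

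The main obstacle is to rule out this collapse. Positivity of the Gauss curvature $K = (r^2 + n - 1)\, r^{-2n}\, e^{(x^2+r^2)/2}$ means any contracting $\gamma_t$ becomes asymptotically round, and by the reflection symmetry a potential collapse point must lie on the $r$-axis, so the two symmetric crossings of $\gamma_t$ with the $r$-axis would have to merge. I would forbid this via a minimax width argument applied to the enclosed region $\Omega_t$. The Gauss-Bonnet formula gives
\[
\int_{\Omega_t} K \, dA_{g_{Ang}} + \oint_{\gamma_t} \kappa_g \, ds_{g_{Ang}} = 2\pi,
\]
which, combined with the explicit positivity of $K$ and control on the total geodesic curvature of $\gamma_t$ along the flow, forces the $g_{Ang}$-area of $\Omega_t$, hence its $g_{Ang}$-width, and hence $L_n(\gamma_t)$, to stay bounded below by a positive constant depending only on $\gamma_0$.

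Once the non-collapse estimate is established, standard smooth subsequential convergence theory for curve shortening on surfaces with positive Gauss curvature provides a limit $\gamma_\infty$ that is a simple closed geodesic satisfying $L_n(\gamma_\infty) \le L_n(\gamma_0) < 2\int_0^\infty s^{n-1} e^{-s^2/4}\, ds$. The hard part, as emphasized above, is the non-collapse step: it must be delicate enough to coexist with the sharp upper length bound coming from the $r$-axis competitor, since those two quantities sit on either side of the critical value for the flow.
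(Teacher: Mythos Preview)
Your outline has the right variational spirit, but the non-collapse step as written does not work, and the paper's proof differs from yours in exactly the two places needed to repair it.

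First, you run the \emph{standard} geodesic curvature flow $\partial_t\mathbf{X}=\kappa_g\mathbf{N}$. On a surface with everywhere positive Gauss curvature (which $(\mathbb{H},g_{Ang})$ is), simple closed curves under this flow are expected to shrink to round points; your Gauss--Bonnet/width paragraph does not prevent this. Concretely, under standard CSF the enclosed area satisfies $\frac{d}{dt}\mathrm{Area}(\Omega_t)=-\oint_{\gamma_t}\kappa_g\,ds=-2\pi+\iint_{\Omega_t}K\,dA$, so once $\iint_{\Omega_t}K\,dA$ drops below $2\pi$ the area strictly decreases and there is nothing in your argument to stop it from reaching zero. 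Saying that ``control on the total geodesic curvature'' forces the area to stay bounded below is circular: that control is precisely what you would need to produce. The paper instead runs the \emph{modified} flow $\partial_t\gamma=(k_g/K)\mathbf{n}$, for which $\frac{d}{dt}\iint_{\Omega_t}K\,dA=-2\pi+\iint_{\Omega_t}K\,dA$, so choosing the initial curve with enclosed Gauss area exactly $2\pi$ makes this quantity \emph{conserved}. That single choice rules out collapse to a point and gives long-time existence for free.

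Second, even with long-time existence, a single initial curve may still drift to a degenerate limit (for instance, toward the double cover of $\{x=0\}$, or out toward $r\to 0$ or $r\to\infty$). Your barrier sketch does not handle this. The paper's mechanism is different from anything in your proposal: it takes a \emph{one-parameter family} of symmetric rectangles $R[a,\varphi(a),c_0]$, each with enclosed Gauss area $2\pi$ and each with $L_n<2\int_0^\infty s^{n-1}e^{-s^2/4}\,ds$ (this inequality, your ``direct computation'', is actually the hard estimate in the paper and requires a dimension induction). It then uses the geodesic line $r=\sqrt{2(n-1)}$ as a barrier: some members of the family eventually lie entirely above it, others entirely below it, and by connectedness of the parameter interval there is an $a_0$ whose evolution intersects $r=\sqrt{2(n-1)}$ for all time. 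That trapping, together with the length bound, is what forces subsequential convergence to a genuine simple closed geodesic rather than a degenerate limit. Your single-curve approach has no analogue of this step.
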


The idea for finding this closed geodesic is to study a modified curve shortening flow:
\begin{equation}
\label{eq:mcsf}
\frac{\partial}{\partial t} \gamma_t = \frac{k_g}{K}\bf{n},
\end{equation}
where $k_g$ is the geodesic curvature and $K>0$ is the Gauss curvature in $(\mathbb{H},g_{Ang})$. The goal is to create an initial curve $\gamma_0$ whose evolution under the flow converges to the geodesic $\gamma_\infty$. To do this we consider a special family of initial curves and study their evolutions under the modified curve shortening flow. The advantage of this approach is that the evolution is well-known and the crux of the proof is in selecting the appropriate family of initial curves, all of which have length less than twice the length of the half-line $x=0$ and enclose Gauss area of exactly $2 \pi$.

The modified curve shortening flow has two important properties.
\begin{itemize}

\item[1.] \emph{The flow decreases length}: Since the arc length $ds$ evolves according to $\frac{\partial}{\partial t} ds = - \frac{k_g^2}{K} \, ds$,
 the length $L_n(\gamma_t)$ is non-increasing:
\[
\frac{d}{dt} L_n(\gamma_t) = -\int_{\gamma_t} \frac{k_g^2}{K}\, ds \leq 0.
\]

\item[2.] \emph{The flow preserves total Gauss area of $2\pi$}: When the evolving curves $\gamma_t$ are simple, closed curves bounding domains $\Omega_t$, the Gauss-Bonnet formula gives 
\begin{align*}
\frac{d}{dt} \iint_{\Omega_t} KdA 
= -\oint_{\gamma_t} k_g\,  ds 
= -2\pi + \iint_{\Omega_t} KdA.
\end{align*}
In particular, if the total Gauss area enclosed by the initial curve equals $2 \pi$, then the total Gauss area enclosed by $\gamma_t$ is also $2 \pi$ as long as the flow exists.

\end{itemize}

Working in regions where the Gaussian curvature is uniformly bounded from above and below away from 0, the short-time existence for the flow follows from~\cite{Gage 1990, Angenent 1990}. This gives the following long-time existence result.
	
\begin{proposition}[Long-time existence]
\label{prop:long-time-existence}
Let $\gamma_0$ be a simple closed curve. If the domain $\Omega_0$ enclosed by $\gamma_0$ satisfies $\iint_{\Omega_0} K dA = 2 \pi$, then the evolution of $\gamma_0$ with normal velocity $k_g/K$ exists for all time.  
\end{proposition}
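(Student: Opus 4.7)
The plan is to start from the short-time existence of~\cite{Gage 1990, Angenent 1990} and extend it to a global solution by ruling out every possible finite-time obstruction. Let $[0,T_{\max})$ denote the maximal smooth existence interval and suppose, toward a contradiction, that $T_{\max}<\infty$. A finite-time singularity can then only arise if one of the following occurs as $t\to T_{\max}$: (i) the curves $\gamma_t$ leave every compact subset of $\mathbb{H}$; (ii) $\gamma_t$ develops a self-intersection; (iii) the geodesic curvature $k_g$ blows up; or (iv) $\gamma_t$ contracts to a point.

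The first step will be to confine the trajectory $\{\gamma_t\}_{t\in[0,T_{\max})}$ to a fixed compact subset $\mathcal{C}\subset\mathbb{H}$. A direct computation shows that, in Euclidean coordinates on $\mathbb{H}$, the Gauss area density satisfies $K\,dA = (1+(n-1)/r^{2})\,dx\,dr$. Preservation of the enclosed Gauss area $\iint_{\Omega_t}K\,dA = 2\pi$ therefore yields two a priori bounds: the enclosed Euclidean area is at most $2\pi$, and $\iint_{\Omega_t}r^{-2}\,dx\,dr \leq 2\pi/(n-1)$, the second of which penalizes $\Omega_t$ from approaching the boundary $r=0$. I would combine these with the length monotonicity $L_n(\gamma_t)\leq L_n(\gamma_0)$ and with the avoidance principle applied to known closed or complete geodesics in $(\mathbb{H},g_{Ang})$ (for instance, the profile of the round sphere $x^2+r^2=2n$ and of the cylinder $r=\sqrt{2(n-1)}$, both stationary under \eqref{eq:mcsf}) to conclude that $\gamma_t$ stays inside a time-independent compact set $\mathcal{C}\subset\mathbb{H}$.

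Once confinement is in hand, $K$ is uniformly bounded above and below away from $0$ on $\mathcal{C}$, so the flow \eqref{eq:mcsf} is uniformly equivalent to the standard geodesic curvature flow in $(\mathbb{H},g_{Ang})$. I would then invoke Angenent's curvature and higher-derivative estimates to rule out (iii); the parabolic maximum principle, applied via intersection counting of $\gamma_t$ with disjoint test curves, to rule out (ii); and the conservation $\iint_{\Omega_t}K\,dA = 2\pi \neq 0$ to rule out (iv), since a point has zero enclosed Gauss area. With bounded geometry and preserved embeddedness on $[0,T_{\max})$, a standard continuation argument then extends the flow past $T_{\max}$, contradicting maximality, and hence $T_{\max}=\infty$.

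The hard part will be the confinement step. The length weight $r^{n-1}e^{-(x^{2}+r^{2})/4}$ vanishes both as $r\to 0$ and as $x^{2}+r^{2}\to\infty$, so a priori a long thin ``finger'' of the curve could have small weighted length while reaching the boundary of $\mathbb{H}$ or escaping to infinity, and the length bound alone does not exclude this. Ruling it out requires exploiting simultaneously the Euclidean area bound, the weighted bound on $\iint_{\Omega_t}r^{-2}\,dx\,dr$, and well-chosen geodesic barriers; moreover, since the proposition is stated for an arbitrary simple closed initial curve with enclosed Gauss area $2\pi$, the barriers must be selected from the geometry of $\gamma_0$ itself rather than from an a priori ansatz.
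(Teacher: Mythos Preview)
The survey itself does not prove this proposition: the single sentence preceding it (``Working in regions where the Gaussian curvature is uniformly bounded from above and below away from $0$, the short-time existence for the flow follows from~\cite{Gage 1990, Angenent 1990}. This gives the following long-time existence result.'') is the entirety of the argument offered here, with the details deferred to the Drugan--Nguyen preprint. So your sketch is already considerably more explicit than what the paper provides, and your overall architecture---short-time existence plus elimination of the four finite-time obstructions (i)--(iv)---is the standard and correct framework. Your computation $K\,dA=(1+(n-1)/r^{2})\,dx\,dr$ is right, and the consequences you draw (Euclidean area $\le 2\pi$, control on $\iint_{\Omega_t} r^{-2}$) are valid and useful.

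The gap is exactly where you place it, and it is not filled by what you write. The specific barriers you name---the sphere profile $x^2+r^2=2n$ and the cylinder line $r=\sqrt{2(n-1)}$---are fixed geodesics that an arbitrary initial $\gamma_0$ may already intersect, so the avoidance principle gives nothing in general; you concede this in your last paragraph but do not say what the $\gamma_0$-adapted barriers would be. More seriously, the integral bounds coming from Gauss-area conservation constrain the \emph{enclosed region} $\Omega_t$, not the curve: a thin finger of $\gamma_t$ reaching toward $r=0$ or toward infinity carries negligible enclosed $K\,dA$ and, since the length density $r^{n-1}e^{-(x^2+r^2)/4}$ vanishes at both ends of $\mathbb{H}$, negligible weighted length as well. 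Note also that $(\mathbb{H},g_{Ang})$ is incomplete (both $r=0$ and infinity lie at finite $g_{Ang}$-distance), so escape in finite time is not excluded by any soft metric argument. To close the confinement step one needs either a foliation of neighborhoods of $\partial\mathbb{H}$ by complete geodesics to serve as barriers, or a direct maximum-principle argument on a coordinate function (e.g.\ on $\min_{\gamma_t} r$ and $\max_{\gamma_t}(x^2+r^2)$) using the explicit Euclidean form of the flow; your sketch stops short of either.
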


By selecting a family of initial curves, all of which have length less than twice the length of the half-line $x=0$ and enclose total Gauss area of exactly $2 \pi$, Proposition \ref{prop:long-time-existence} guarantees that the evolutions of these curves exist for all time, and because the length decreases, the flow starting from each curve does not converge to a double cover of the half-line $\{x=0\}$. To select the family of initial curves, we consider rectangles  $R[a,b,c]$ with vertices $(a, -c), (a, c), (b,c), (b,-c)$, $a<b$ and $c>0$. 

\begin{figure}[htb]
\begin{picture}(120,90)(0,0)
\linethickness{0.2mm}
\put(10,10){\line(0,1){80}}
\put(10,90){\vector(0,1){0.12}}
\linethickness{0.25mm}
\put(10,50){\line(1,0){110}}
\put(120,50){\vector(1,0){0.12}}
\linethickness{0.25mm}
\multiput(40,30)(0,1.95){21}{\line(0,1){0.98}}
\linethickness{0.2mm}
\multiput(40,30)(1.96,0){26}{\line(1,0){0.98}}
\linethickness{0.2mm}
\multiput(90,30)(0,1.95){21}{\line(0,1){0.98}}
\linethickness{0.2mm}
\multiput(40,70)(1.96,0){26}{\line(1,0){0.98}}
\put(15,85){\makebox(0,0)[cc]{$x$}}

\put(115,45){\makebox(0,0)[cc]{}}

\put(115,45){\makebox(0,0)[cc]{}}

\put(45,75){\makebox(0,0)[cc]{}}

\put(115,45){\makebox(0,0)[cc]{$r$}}

\put(35,45){\makebox(0,0)[cc]{$a$}}

\put(95,45){\makebox(0,0)[cc]{$b$}}

\put(3,75){\makebox(0,0)[cc]{$c$}}

\put(1,35){\makebox(0,0)[cc]{$-c$}}

\put(10,70){\makebox(0,0)[cc]{}}

\linethickness{0.2mm}
\linethickness{0.2mm}
\linethickness{0.2mm}
\put(8,70){\line(1,0){4}}
\linethickness{0.2mm}
\put(8,30){\line(1,0){4}}
\end{picture}
\caption{The rectangle $R[a,b,c]$.} 
\end{figure}
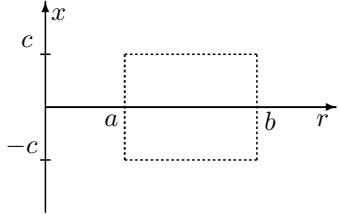

Numerics show that all the rectangles with $c=1/2$ will satisfy the requirement on length. To simplify computations, we take $c=c_0 \approx 0.481$ to be the positive real number such that $\frac{e^{-c_0^2/4}}{\int_0^{c_0} e^{-x^2/4}dx}=2$.

\begin{proposition}
\label{prop:PerimeterRectangle}
For every $a,b \in (0, \infty)$ and $n\geq 2$, we have
	\begin{equation}
	\label{eq:PerimeterBounded}
	L_n(a,b,c_0) <  2 \int_0^\infty s^{n-1} e^{-s^2/4} ds.
	\end{equation}
\end{proposition}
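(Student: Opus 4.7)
The plan is to compute $L_n(a,b,c_0)$ exactly by splitting the rectangle into its four sides, use the defining equation for $c_0$ to simplify, and then reduce the proposition to optimizing the resulting expression over $(a,b)$ and verifying that its global maximum is strictly below $2I_n$.

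First I would parametrize each side of $R[a,b,c_0]$ and integrate the conformal length element $r^{n-1}e^{-(x^2+r^2)/4}\sqrt{dx^2+dr^2}$ of $g_{Ang}$. The two sides at $x=\pm c_0$ (with $r\in[a,b]$) each contribute $e^{-c_0^2/4}\int_a^b s^{n-1}e^{-s^2/4}\,ds$, and the two sides at $r\in\{a,b\}$ (with $x\in[-c_0,c_0]$) contribute $2F(c_0)\phi(a)$ and $2F(c_0)\phi(b)$, where $\phi(t):=t^{n-1}e^{-t^2/4}$ and $F(t):=\int_0^t e^{-x^2/4}\,dx$. The defining identity $e^{-c_0^2/4}=2F(c_0)$ then collapses the sum to
\[
L_n(a,b,c_0)=e^{-c_0^2/4}\bigl[\,2(\Phi(b)-\Phi(a))+\phi(a)+\phi(b)\,\bigr], \qquad \Phi(t):=\int_0^t\phi(s)\,ds.
\]

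Next I would locate the maximum of the bracketed expression over $\{0<a<b\}$. The equations $\partial_a L_n=0$ and $\partial_b L_n=0$ reduce to $\phi'(a)=2\phi(a)$ and $\phi'(b)=-2\phi(b)$, and substituting $\phi'(t)=t^{n-2}e^{-t^2/4}\bigl(n-1-t^2/2\bigr)$ turns these into the quadratics $t^2+4t-2(n-1)=0$ and $t^2-4t-2(n-1)=0$. The unique interior critical point is
\[
a_*=-2+\sqrt{2(n+1)}, \qquad b_*=2+\sqrt{2(n+1)},
\]
with the pleasing symmetries $b_*-a_*=4$ and $a_*b_*=2(n-1)$, and the exponent identities $a_*^2/4=(n-1)/2-a_*$ and $b_*^2/4=(n-1)/2+b_*$ that collapse $\phi(a_*)$ and $\phi(b_*)$ into closed form. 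A second-derivative check (together with the monotonicity behavior of $\phi$) confirms that $(a_*,b_*)$ is the global maximum of $L_n$ on $\{0<a<b\}$.

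The last and hardest step is to verify $L_n(a_*,b_*,c_0)<2I_n$, where $I_n:=\int_0^\infty s^{n-1}e^{-s^2/4}\,ds$. The main obstacle is that this inequality is strikingly tight: the ratio $L_n(a_*,b_*,c_0)/(2I_n)$ is approximately $0.999$ for $n=2$, so no coarse estimate of the form $\Phi(b)-\Phi(a)\leq I_n$ combined with $\phi(a)+\phi(b)\leq 2\max\phi$ can possibly suffice. The defining property of $c_0$ must therefore be used in an essential way: combining it with the critical-point identities produces a closed-form expression for $L_n(a_*,b_*,c_0)$ in terms of $\sqrt{2(n+1)}$ and $c_0$, and I would verify the inequality by direct computation. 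The tightest case is $n=2$, where explicit estimation using $e^{\pm\sqrt{6}}$ gives the bound with about $1\%$ to spare; the margin widens for $n\geq 3$ since $I_n$ grows more quickly with $n$ than the boundary contribution $\phi(a_*)+\phi(b_*)$, so analogous direct verification handles all remaining dimensions.
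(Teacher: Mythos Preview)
Your reduction is correct and clean: the perimeter formula $L_n(a,b,c_0)=e^{-c_0^2/4}\bigl[2(\Phi(b)-\Phi(a))+\phi(a)+\phi(b)\bigr]$ is right, and because the bracket separates as $h_1(a)+h_2(b)$ with each $h_i$ having a single interior maximum, the critical point $(a_*,b_*)=\bigl(-2+\sqrt{2(n+1)},\,2+\sqrt{2(n+1)}\bigr)$ is indeed the global maximum over $\{0<a<b\}$. This is a genuinely different route from the paper, which does not optimize over $(a,b)$ at all but instead bounds $L_n(a,b,c_0)$ for every pair by an induction on $n$: low dimensions are checked numerically, and the inductive step is pushed through with Taylor expansions. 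Your approach has the advantage of isolating a single explicit inequality per dimension; the paper's approach trades that for a mechanism that propagates the estimate from one $n$ to the next.

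The gap is in your last step. You must prove $e^{-c_0^2/4}G_n(a_*,b_*)<2I_n$ for \emph{every} $n\geq 2$, and ``analogous direct verification handles all remaining dimensions'' is not a proof of infinitely many inequalities. Your heuristic that $I_n$ outgrows $\phi(a_*)+\phi(b_*)$ is not enough by itself, since the dominant term $2(\Phi(b_*)-\Phi(a_*))$ also scales like $I_n$; what you actually need is that the ratio $G_n(a_*,b_*)/(2I_n)$ stays below $e^{c_0^2/4}\approx 1.060$, and you have not argued this uniformly in $n$. To close the argument you would need either a monotonicity statement for that ratio, or a uniform estimate for large $n$ together with finitely many explicit checks --- which is essentially the inductive structure the paper uses. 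Also, for $n=2$ the margin is about $0.1\%$, not $1\%$ (consistent with your own figure of $0.999$), so the ``explicit estimation using $e^{\pm\sqrt{6}}$'' must be carried out with real care; a rigorous hand verification at that precision, including a rigorous value for $c_0$, is nontrivial.
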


The proof of this result is by induction on the dimension and numerical verification on lower dimensions. The induction is quite involved and makes heavy use of Taylor expansions. 

\begin{proposition}
There is a smooth function $\varphi: \mathbb{R}_+ \to \mathbb{R}_+$ with $\varphi(a)>a$, with $\lim_{a\to 0} \varphi(a) = 0$, so that  the family of rectangles $R[a,\varphi(a),c_0]$ satisfies 
	\[
	  \iint_{R[a,\varphi(a),c_0]} K dA = 2 \pi, \quad L_n(a,\varphi(a),c_0) <  2 \int_0^\infty s^{n-1} e^{-s^2/4} ds.
	\]
\end{proposition}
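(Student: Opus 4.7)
The plan is to reduce the construction to an elementary algebraic equation using a cancellation in the Gauss--Bonnet integrand, and then invoke the implicit function theorem; the length bound will then be a free consequence of Proposition~\ref{prop:PerimeterRectangle}. Combining the Gauss curvature $K = r^{-2n}(r^2+(n-1))e^{(x^2+r^2)/2}$ and the area form $dA_{g_{Ang}} = r^{2(n-1)}e^{-(x^2+r^2)/2}\,dx\,dr$ recorded in Section~\ref{examples}, the Gaussian weights cancel and
\[
K\,dA_{g_{Ang}} \,=\, \left(1 + \frac{n-1}{r^2}\right)dx\,dr.
\]
Integrating this elementary expression over $R[a,b,c_0] = \{\, a \leq r \leq b,\ -c_0 \leq x \leq c_0 \,\}$ gives
\[
\Phi(a,b) \,:=\, \iint_{R[a,b,c_0]} K\,dA \,=\, 2c_0\left[(b-a) + (n-1)\left(\frac{1}{a} - \frac{1}{b}\right)\right],
\]
so the Gauss area condition becomes the single scalar equation $\Phi(a,b) = 2\pi$.

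With this reduction in hand I would carry out three short steps. First, for each fixed $a > 0$ the map $b \mapsto \Phi(a,b)$ vanishes at $b = a$, tends to $+\infty$ as $b \to \infty$, and has strictly positive derivative $\partial_b\Phi = 2c_0\bigl(1 + (n-1)/b^2\bigr) > 0$; hence there is a unique $b = \varphi(a) > a$ with $\Phi(a,\varphi(a)) = 2\pi$, and the implicit function theorem supplies $\varphi \in C^\infty(\mathbb{R}_+, \mathbb{R}_+)$. Second, I would verify $\varphi(a) \to 0$ as $a \to 0^+$ by contradiction: if $\varphi(a_k) \geq \varepsilon > 0$ along some sequence $a_k \to 0^+$, then $(n-1)(1/a_k - 1/\varphi(a_k)) \geq (n-1)(1/a_k - 1/\varepsilon)$ diverges to $+\infty$ while the summand $\varphi(a_k) - a_k$ stays nonnegative, forcing $\Phi(a_k,\varphi(a_k)) \to \infty$ and contradicting $\Phi \equiv 2\pi$. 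Third, the length bound is immediate: Proposition~\ref{prop:PerimeterRectangle} asserts $L_n(a,b,c_0) < 2\int_0^\infty s^{n-1}e^{-s^2/4}ds$ uniformly in $(a,b) \in (0,\infty)^2$, so in particular it holds at $b = \varphi(a)$.

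All of the serious analytic work has been absorbed into Proposition~\ref{prop:PerimeterRectangle}, so there is no substantive obstacle at this stage; the main task is simply to notice the $K\,dA$ cancellation that makes $\Phi$ an elementary function of $(a,b)$ and then invoke the implicit function theorem. The only mildly delicate point is the small-$a$ asymptotic $\varphi(a) \to 0$, which the short comparison argument above handles.
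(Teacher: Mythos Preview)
Your argument is correct. The survey itself does not supply a proof of this proposition (it is stated as a quotation from \cite{Drugan Nguyen preprint}), so there is no paper proof to compare against; but the cancellation $K\,dA_{g_{Ang}} = (1 + (n-1)/r^2)\,dx\,dr$ you observe is exactly the natural mechanism, and the resulting elementary expression
\[
\Phi(a,b) = 2c_0\Bigl[(b-a) + (n-1)\bigl(\tfrac{1}{a}-\tfrac{1}{b}\bigr)\Bigr]
\]
makes the implicit function theorem step and the limit $\varphi(a)\to 0$ as $a\to 0^+$ routine, while the length bound is indeed inherited verbatim from Proposition~\ref{prop:PerimeterRectangle}. (For the record, your contradiction argument in step two can be sharpened to the asymptotic $\varphi(a) = a + \pi a^2/((n-1)c_0) + o(a^2)$, which confirms both $\varphi(a)\to 0$ and $\varphi(a)>a$ near the origin.) No gaps.
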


Next we consider the modified curve shortening flow for the initial curves $ R[a,\varphi(a),c_0]$.

\begin{proposition}  Let $\Phi: \mathbb{R}_+ \times \mathbb{R}_+ \to C^{0} (\mathbb S^1, \mathbb{H})$ be the map with the following properties:
	\begin{enumerate}
	\item $\Phi(a, 0) = R[a,\varphi(a), c_0]$,
	\item For fixed $a$, $\Phi(a, t)$ satisfies the evolution equation \eqref{eq:mcsf}.
	\end{enumerate}
There exists an $a_0 \in \mathbb{R}_+$ so that $\Phi(a_0,t)$ intersects the line $r = \sqrt{2(n-1)}$ for all time $t \in \mathbb{R}_+$. 
\end{proposition}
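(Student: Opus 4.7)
The plan is to use a connectedness argument on the parameter $a$, combined with the avoidance principle for the modified curve shortening flow \eqref{eq:mcsf}, exploiting the fact that the horizontal line $L := \{r = \sqrt{2(n-1)}\}$ is itself a (non-compact) geodesic of $(\mathbb{H}, g_{Ang})$. Indeed, substituting $r \equiv r_0$ with $r' = r'' = 0$ into the geodesic equation from Section \ref{examples} forces $\frac{n-1}{r_0} = \frac{r_0}{2}$, giving $r_0 = \sqrt{2(n-1)}$; this is the profile of the round cylinder self-shrinker. Consequently $k_g \equiv 0$ along $L$, so $L$ is a stationary solution of \eqref{eq:mcsf}, and any closed curve disjoint from $L$ remains disjoint from $L$ under the flow by the standard avoidance principle.

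Next I would define the parameter sets
\begin{align*}
A_- &= \{a \in \mathbb{R}_+ : \Phi(a, t) \subset \{r < \sqrt{2(n-1)}\} \text{ for some } t \geq 0\}, \\
A_+ &= \{a \in \mathbb{R}_+ : \Phi(a, t) \subset \{r > \sqrt{2(n-1)}\} \text{ for some } t \geq 0\},
\end{align*}
and establish three properties: (i) both are nonempty, (ii) both are open in $\mathbb{R}_+$, and (iii) they are disjoint. Property (i) is easy: since $\varphi$ is continuous with $\lim_{a\to 0}\varphi(a) = 0$, taking $a$ sufficiently small makes $\varphi(a) < \sqrt{2(n-1)}$, so the rectangle lies entirely below $L$ and $a \in A_-$; and any $a > \sqrt{2(n-1)}$ has $R[a,\varphi(a),c_0] \subset \{r > \sqrt{2(n-1)}\}$, so $a \in A_+$. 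Property (ii) comes from continuous dependence of $\Phi(\cdot, t)$ on the initial datum at each fixed $t$: if $a_* \in A_+$ witnessed by time $t_*$, then the compactness of $\Phi(a_*, t_*)$ yields a gap $\min_{\Phi(a_*,t_*)} r > \sqrt{2(n-1)} + \varepsilon$, and for $a$ near $a_*$ the curve $\Phi(a, t_*)$ is still $C^0$-close and hence remains above $L$. Property (iii) uses the avoidance principle: if some $a$ belonged to both, the evolution $\Phi(a,\cdot)$ would be strictly below $L$ at one time and strictly above $L$ at another, forcing a tangency in between, contradicting the fact that a curve strictly on one side of the stationary geodesic $L$ cannot reach $L$.

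Since $\mathbb{R}_+$ is connected and $A_-, A_+$ are disjoint nonempty open subsets, their union cannot exhaust $\mathbb{R}_+$; pick any $a_0 \in \mathbb{R}_+ \setminus (A_- \cup A_+)$. Then at every $t \geq 0$, the simple closed curve $\Phi(a_0, t)$ is neither entirely below nor entirely above $L$, so by the intermediate value theorem applied to the $r$-coordinate it must intersect $L$, giving the desired $a_0$. The main technical obstacle will be making the avoidance principle and continuous dependence precise starting from the non-smooth rectangular data $R[a, \varphi(a), c_0]$: corners must be smoothed in a one-parameter family that still encloses Gauss area $2\pi$ (using the Gauss--Bonnet balance as in Proposition \ref{prop:long-time-existence}), the modified flow estimates of \cite{Gage 1990, Angenent 1990} applied in any compact region $\{\delta \leq r \leq 1/\delta\} \subset \mathbb{H}$ where $K$ is smooth and positive, and one then passes to the limit on the smoothing parameter using the uniform length and area bounds. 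A secondary point is that the avoidance principle in the non-Euclidean setting with the speed $k_g/K$ is a routine adaptation of the Euclidean argument since $K$ is smooth and strictly positive on $\mathbb{H}$.
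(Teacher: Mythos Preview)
Your proposal is correct and follows essentially the same approach as the paper: both arguments use that $L=\{r=\sqrt{2(n-1)}\}$ is a stationary geodesic, invoke the avoidance/maximum principle to conclude that once $\Phi(a,t)$ lies strictly on one side of $L$ it stays there, define two disjoint open parameter sets, and appeal to the connectedness of $\mathbb{R}_+$. Your write-up is in fact more complete than the paper's terse version---you explicitly verify non-emptiness of $A_\pm$ (using $\lim_{a\to 0}\varphi(a)=0$ and $a>\sqrt{2(n-1)}$) and flag the smoothing of the rectangular initial data, points the survey leaves implicit.
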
	

\begin{proof} The set of curves that do not intersect the line $r = \sqrt{2(n-1)}$ is split into two disjoint sets
 	\begin{align*}
	A_1 & = \{ \text{continuous closed curves } \gamma: \mathbb{S}^1 \to \mathbb{H} \mid \gamma(s) < r_n,  s \in S^1\} ,\\
	A_2 & = \{ \text{continuous closed curves } \gamma: \mathbb{S}^1 \to \mathbb{H} \mid \gamma(s) > r_n,  s \in S^1\}. 
	\end{align*}
The line $r=\sqrt{2(n-1)}$ is a geodesic (it is the profile curve for the round cylinder self-shrinker), so it is stationary under the modified curve shortening flow. By the maximum principle, if $\Phi(a,t_0) \in A_i$ for some $i=1,2$ and some time $t_0$, then $\Phi(a, t) \in A_i$ for $t \geq t_0$. Consider the following subsets of $\mathbb{R}_+$:
	\begin{align*}
	U_i & = \{ a \in \mathbb{R}_+ \mid \exists t >0, \Phi(a,t) \in A_i\}
	\end{align*}
Both $U_1$ and $U_2$ are open and $U_1 \cap U_2 = \emptyset$. Therefore $U_1 \cup U_2 \neq \mathbb{R}_+$, which proves the existence of $a_0$.
\end{proof} 

The remainder of the proof of Theorem~\ref{thm:variational} is dedicated to showing that the curves $\gamma_t:= \Phi(a_0,t)$ converge to a simple, closed geodesic that is symmetric about the $r$-axis and convex in the Euclidean sense. First, we show that on compact sets, the curves $\gamma_t$ approach a geodesic along a subsequence. 

\begin{proposition}
\label{prop:sequenceti} There is a sequence $t_{i}$ so that 
	\begin{equation}
	\label{eq:TotalCurvatureDecay}
	\int_{\gamma_{t_i}\cap E}  |k|\, ds \to 0
	\end{equation}
for any compact subset $E$ of the open half-plane $\mathbb{H}$. 
\end{proposition}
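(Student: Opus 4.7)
The plan is to extract the sequence $t_i$ from the energy-dissipation identity built into the modified curve shortening flow. Since the arc length element in $g_{Ang}$ evolves as $\frac{\partial}{\partial t} ds = -\frac{k_g^2}{K} ds$, we have
\[
\frac{d}{dt} L_n(\gamma_t) = -\int_{\gamma_t} \frac{k_g^2}{K} ds \leq 0,
\]
so $L_n(\gamma_t)$ is monotonically non-increasing. Because $L_n(\gamma_t) \geq 0$, it converges to some $L_\infty \geq 0$ as $t \to \infty$, and integrating the dissipation identity gives
\[
\int_0^\infty \!\! \int_{\gamma_t} \frac{k_g^2}{K}\, ds\, dt = L_n(\gamma_0) - L_\infty \leq L_n(\gamma_0) < \infty.
\]
Hence the function $F(t) := \int_{\gamma_t} k_g^2/K\, ds$ lies in $L^1([0,\infty))$, so $\liminf_{t\to\infty} F(t) = 0$, and I can select $t_i \to \infty$ with $F(t_i) \to 0$.

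Next I convert this weighted $L^2$ bound on $k_g$ into the desired $L^1$ bound on compact subsets of $\mathbb{H}$. Fix a compact $E \subset \mathbb{H}$. From the formula $K = \frac{r^2 + (n-1)}{r^{2n}} e^{(x^2+r^2)/2}$ with $r > 0$ on $\mathbb{H}$, both $K$ and $1/K$ are bounded on $E$ by constants $C_E$ depending only on $E$. By Cauchy--Schwarz,
\[
\int_{\gamma_{t_i} \cap E} |k_g|\, ds = \int_{\gamma_{t_i} \cap E} \frac{|k_g|}{\sqrt{K}} \cdot \sqrt{K}\, ds \leq \left( \int_{\gamma_{t_i}} \frac{k_g^2}{K}\, ds \right)^{1/2} \left( \int_{\gamma_{t_i} \cap E} K\, ds \right)^{1/2}.
\]
The second factor is controlled by $\sqrt{C_E \cdot L_n(\gamma_{t_i})} \leq \sqrt{C_E \cdot L_n(\gamma_0)}$, while the first factor is $\sqrt{F(t_i)} \to 0$. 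This forces $\int_{\gamma_{t_i}\cap E} |k_g|\, ds \to 0$, which is the claim (since on the compact set $E$, geodesic curvature for $g_{Ang}$ and the Euclidean curvature differ by bounded Christoffel-type terms, the same conclusion also yields the Euclidean analogue if that interpretation is intended).

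There is no serious obstacle here: the argument is the standard ``dissipation $+$ Cauchy--Schwarz'' extraction of subsequential limits used for curvature flows. The only two points requiring mild care are (i) verifying that $L_n(\gamma_t)$ is bounded below so that the global energy bound is finite, which is automatic from $L_n(\gamma_t) \geq 0$, and (ii) ensuring that $K$ is uniformly bounded on $E$, which holds because $E$ stays a definite distance away from the axis $\{r=0\}$ where the conformal factor of $g_{Ang}$ degenerates. Both are immediate given the setup.
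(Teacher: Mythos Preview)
Your argument is correct and is precisely the standard dissipation-plus-Cauchy--Schwarz extraction one expects here. The paper itself does not supply a proof of this proposition; Section~\ref{Section 4} is an outline of the argument from \cite{Drugan Nguyen preprint}, and Proposition~\ref{prop:sequenceti} is only stated. Your approach---integrating the length-decay identity $\frac{d}{dt}L_n(\gamma_t)=-\int_{\gamma_t}k_g^2/K\,ds$ to get $\int_0^\infty\!\int_{\gamma_t}k_g^2/K\,ds\,dt<\infty$, extracting $t_i$ along which the inner integral vanishes, and then converting to an $L^1$ bound on any compact $E$ via Cauchy--Schwarz and the uniform bounds on $K$ there---is exactly the mechanism one would expect in the source paper, and your handling of the quantifier order (one sequence $t_i$ chosen independently of $E$) is correct.
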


\begin{proposition}
\label{prop:C1Convergence} Let $t_i$ be the sequence from Proposition \ref{prop:sequenceti} (or possibly one of its subsequences) and let $E$ be a compact set in $\mathbb{H}$. If for some $p_i \in \mathbb{S}^1$, $i \in \mathbb N$,  the sequence $\{\gamma_{i} (p_i)\}$ converges to a point $P \in E$, then there exists a subsequence ${i_j}$ so that the connected component of $\gamma_{i_j} \cap E$ containing $\gamma_{i_j} (p_{i_j}) $ converges in $C^1$ in $E$.
The limit curve contains $P$ and satisfies the geodesic equation in $E$. 
\end{proposition}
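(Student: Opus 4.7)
The strategy has three stages: (i) extract $C^0$-limits via Arzel\`a-Ascoli, (ii) identify the limit as a smooth geodesic using the vanishing of total curvature on $E$, and (iii) bootstrap to $C^1$ convergence through an ODE comparison.

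Since $E$ is compactly contained in the open half-plane, the conformal factor of $g_{Ang}$ is smooth and bounded above and below by positive constants on $E$, so $g_{Ang}$ is uniformly equivalent to the Euclidean metric on $E$. Because the modified curve shortening flow does not increase $L_n(\gamma_t)$, each component of $\gamma_i \cap E$ has uniformly bounded $g_{Ang}$-length. Parametrize the component containing $\gamma_i(p_i)$ by $g_{Ang}$-arc length $\sigma \in [a_i, b_i]$, with $\gamma_i(0) = \gamma_i(p_i) \to P$. After passing to a subsequence, $[a_i, b_i] \to [a_\infty, b_\infty]$; since $|\gamma_i'|_{g_{Ang}} = 1$ the family is uniformly Lipschitz, and Arzel\`a-Ascoli gives $\gamma_{i_j} \to \gamma_\infty$ uniformly on $[a_\infty, b_\infty]$, with $\gamma_\infty$ Lipschitz and $\gamma_\infty(0) = P$. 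To identify the limit, in local coordinates on $E$ the tangent $T_i = \gamma_i'$ satisfies
$$T_i^{k\,\prime}(\sigma) + \Gamma^k_{\ell m}\bigl(\gamma_i(\sigma)\bigr)\, T_i^\ell(\sigma)\, T_i^m(\sigma) = k_g(\sigma)\, N_i^k(\sigma);$$
the Christoffel term is uniformly bounded (since $g_{Ang}$ is smooth on $E$ and $|T_i| = 1$) while the forcing tends to zero in $L^1$ by Proposition~\ref{prop:sequenceti}. Hence $\{T_i\}$ has uniformly bounded total variation, and Helly's selection theorem extracts a further subsequence with $T_i \to T_\infty$ pointwise a.e. Passing to the limit in the integrated form of the ODE shows that $\gamma_\infty$ is an absolutely continuous solution of the geodesic equation on $(a_\infty, b_\infty)$, hence smooth by standard ODE regularity.

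For the $C^1$ upgrade, set $\eta_i = T_i - T_\infty$; subtracting the two geodesic equations yields
$$\eta_i'(\sigma) = -\bigl[\Gamma^k_{\ell m}(\gamma_i)\,T_i^\ell T_i^m - \Gamma^k_{\ell m}(\gamma_\infty)\,T_\infty^\ell T_\infty^m\bigr] + k_g N_i^k,$$
with the bracket bounded by $C_0\bigl(\|\gamma_i - \gamma_\infty\|_\infty + |\eta_i|\bigr)$ by smoothness of $\Gamma$ and the mean value theorem. Pick $\sigma_0 \in [a_\infty, b_\infty]$ at which $T_i(\sigma_0) \to T_\infty(\sigma_0)$ (possible since $T_i \to T_\infty$ a.e.), integrate from $\sigma_0$, and apply Gr\"onwall's inequality to obtain
$$\|\eta_i\|_{L^\infty} \leq C_1\Bigl(|\eta_i(\sigma_0)| + \|\gamma_i - \gamma_\infty\|_\infty + \int |k_g|\, d\sigma\Bigr),$$
and each of the three right-hand terms tends to zero. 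Thus $T_i \to T_\infty$ uniformly, which is the asserted $C^1$ convergence. The principal technical difficulty is this final step: the geodesic curvature is controlled only in $L^1$, not pointwise, so closing a sup-norm estimate requires a Gr\"onwall argument that absorbs an integrable forcing. The compactness of $E$ (for bounded Christoffels) and the smoothness of the limit geodesic (enabling linearization) are essential to making this work.
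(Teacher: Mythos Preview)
The paper, being a survey, does not include a proof of this proposition; it is stated as part of the outline of the argument from \cite{Drugan Nguyen preprint}, so there is no in-paper proof to compare against. Your three-stage strategy is sound and is the natural route: the uniform bound on $L_n(\gamma_t)$ together with $g_{Ang}$-arc-length parametrization yields equicontinuity and a $C^0$ sublimit by Arzel\`a--Ascoli; writing $T_i'+\Gamma(\gamma_i)(T_i,T_i)=k_g N_i$ and invoking Proposition~\ref{prop:sequenceti} gives a uniform BV bound on the tangents, so Helly's theorem plus dominated convergence in the integrated equation identifies the limit as a smooth solution of the geodesic ODE; and the Gr\"onwall estimate on $\eta_i=T_i-T_\infty$, which absorbs the $L^1$ forcing $k_g N_i$, upgrades pointwise convergence of the tangents to uniform convergence. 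The one place that deserves an extra sentence is the handling of the varying arc-length domains $[a_i,b_i]$: since their endpoints lie on $\partial E$ and move with $i$, the Arzel\`a--Ascoli, Helly, and Gr\"onwall steps are cleanest when carried out on fixed compact subintervals of the limiting parameter interval, followed by a diagonal extraction. This is a routine adjustment rather than a gap.
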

 
For $t>0$ we know each curve $\gamma_t$ restricted to the positive quadrant is a graph over the $r$-axis.  Since the flow preserves symmetry, this follows from the initial evolution along with a result from~\cite{Angenent 1991} on intersection points. The proof is completed by showing that a convergent subsequence of $\gamma_t$ stays within a compact domain. This is done using properties of geodesics written as graphs over the $r$-axis and length estimates for $\gamma_t$. Once it is know that there is a subsequence of $\gamma_t$ that stays in a compact subset of $\mathbb{H}$, it follows that there is a subsequence of $\gamma_t$ that converges to a geodesic $\gamma_\infty$ with the desired properties.


\section{Shooting method for closed self-shrinkers with rotational symmetry}   \label{Section 5}

Recall from Section~\ref{examples} that a hypersurface with rotational symmetry $\mathbf{X}(s,\omega) =  \left( x(s), r(s) \omega \right)$, $s \in \mathbb{R}$, $\omega \in \mathbb{S}^{n-1}$ is a self-shrinker if and only if the profile curve $(x(s), r(s))$ is a geodesic in $(\mathbb{H},g_{Ang})$ (see Section~\ref{examples}). Consequently, the construction of closed self-shrinkers with rotational symmetry can be reduced to finding closed geodesics and geodesic arcs that intersect the $x$-axis orthogonally in $(\mathbb{H},g_{Ang})$.

The geodesic equation for the profile curve $(x(s),r(s))$ of a self-shrinker with rotational symmetry is
\begin{equation}
\label{geo:eqn}
\frac{x' r'' - x'' r'}{x'^2 + r'^2}  = \left( \frac{n-1}{r}  -\frac{r}{2} \right) x' + \frac{1}{2}xr'.
\end{equation}
Reparametrizing the profile curve so that $x'(s)^2 + r'(s)^2 = 1$, shows that the tangent angle 
$\alpha(s)$ solves the system
\begin{equation} 
\label{SSEq}
\left\{
\begin{array}{lll}
x'(s) & = & \cos \alpha(s), \\
r'(s) & = & \sin \alpha(s), \\
\alpha'(s) & = &\left( \frac{n - 1}{r(s)} - \frac{r(s)}{2} \right) \cos \alpha(s) + \frac{x(s)}{2} \sin \alpha(s).
\end{array}
\right.
\end{equation}
For $(x_0, r_0) \in \mathbb{H}$ and $\alpha_0 \in \mathbb{R}$, we let $\Gamma[x_0, r_0, \alpha_0](s)$ denote the unique solution to~(\ref{SSEq}) satisfying
\[
\Gamma[x_0, r_0, \alpha_0] (0) = (x_0, r_0), \quad \Gamma[x_0, r_0, \alpha_0]' (0) = (\cos(\alpha_0), \sin(\alpha_0)).
\]
The geodesics $\Gamma[x_0, r_0, \alpha_0]$ depend smoothly on the parameters $[x_0, r_0, \alpha_0] \in \mathbb{H} \times \mathbb{R}$, and as was shown in~\cite{Drugan 2015} this dependence extends smoothly to geodesics that intersect the $x$-axis orthogonally.
 

\subsection{An immersed sphere self-shrinker}
\label{immersed sphere}

In this section, we give a detailed illustration of how the shooting method for the geodesic equation~(\ref{SSEq}) can be used to prove the following result.

\begin{theorem}[\cite{Drugan 2015}]
There exists an immersed and non-embedded $\mathbb{S}^n$ self-shrinker in $\mathbb{R}^{n+1}$.
\end{theorem}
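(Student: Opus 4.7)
The plan is to apply a shooting method to the geodesic system~(\ref{SSEq}) starting perpendicularly from the $x$-axis, with shooting parameter $x_0 > 0$. Let $\gamma_{x_0} := \Gamma[x_0, 0, \pi/2]$, extended smoothly to starting points on the $x$-axis by the smooth dependence result quoted just above. The goal is to find a specific $x_0^{\star} \in (0, \sqrt{2n})$ for which $\gamma_{x_0^{\star}}$ first meets the $x$-axis again perpendicularly, but only after winding at least once around the round cylinder $r \equiv \sqrt{2(n-1)}$. Perpendicularity at both endpoints guarantees that the resulting rotational surface closes up smoothly into an $\mathbb{S}^n$, while the extra winding forces the profile curve to self-intersect in $\mathbb{H}$, so that the resulting self-shrinker is immersed but not embedded.

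The argument hinges on two reference geodesics. The round sphere $\gamma_{\sqrt{2n}}$ is the upper semicircle $x^2 + r^2 = 2n$; it returns to the $x$-axis perpendicularly at $(-\sqrt{2n}, 0)$ with total tangent-angle change $\Delta\alpha = \pi$. The round cylinder $r \equiv \sqrt{2(n-1)}$ is a complete horizontal geodesic that, by uniqueness for~(\ref{SSEq}), acts as a barrier for every other geodesic. On the open set $U \subset (0, \sqrt{2n})$ where $\gamma_{x_0}$ returns transversally to the $x$-axis at some first time $s^{\star}(x_0)$, both $s^{\star}$ and the accumulated angle $\Delta\alpha(x_0) := \alpha(s^{\star}(x_0)) - \pi/2$ depend continuously on $x_0$.

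The heart of the argument is to show that $\Delta\alpha$ is unbounded as $x_0$ approaches a critical value. Heuristically, for $x_0$ near $\sqrt{2n}$ the geodesic is a small perturbation of the semicircle and $\Delta\alpha \approx \pi$, while as $x_0$ decreases the restoring mechanism encoded in $\alpha' = \bigl(\tfrac{n-1}{r} - \tfrac{r}{2}\bigr)\cos\alpha + \tfrac{x}{2}\sin\alpha$ forces $\gamma_{x_0}$ to oscillate around the cylinder $r = \sqrt{2(n-1)}$, adding approximately $2\pi$ to $\Delta\alpha$ for each complete oscillation before the geodesic finally escapes the trapping region and returns to the axis. Once the unboundedness of $\Delta\alpha$ on $U$ is established, continuity together with the intermediate value theorem produces an $x_0^{\star} \in U$ with $\Delta\alpha(x_0^{\star}) = 3\pi$, which already forces $\gamma_{x_0^{\star}}$ to have at least one self-intersection. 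The rotational hypersurface generated by $\gamma_{x_0^{\star}}$ is then the desired immersed, non-embedded $\mathbb{S}^n$ self-shrinker.

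The main obstacle is the quantitative analysis of the trapping-and-escape regime near the cylinder. One has to rule out pathologies such as the geodesic being asymptotic to the cylinder (and thus never returning to the axis), and one has to control the amplitude of oscillations over long arc length in order to count the accumulated rotation. I would attack this by linearizing~(\ref{SSEq}) around the cylinder geodesic to capture the leading oscillatory behavior, by combining the cylinder barrier with monotonicity of the weighted quantity $\tfrac12(x^2 + r^2)$ along $\gamma_{x_0}$ to bound excursions, and by running a Sturm-type oscillation count for $\alpha$ along the trajectory. Once the oscillatory regime is understood, the concluding continuity and intermediate-value steps, together with the observation that $\Delta\alpha = 3\pi$ forces self-intersection, are straightforward.
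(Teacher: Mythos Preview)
Your shooting setup matches the paper's, but the paper's execution is quite different and your version has real gaps. The paper exploits the reflection symmetry of~(\ref{geo:eqn}) about the $r$-axis: rather than following $S[t]$ back to the $x$-axis, it tracks the geodesic only until it reaches a point $Q_t$ in $\mathbb{H}$ where the tangent is horizontal, and seeks $x_*$ for which $Q_{x_*}$ lands on the $r$-axis (reflection then closes the profile). To do this it writes $S[t]$ as two successive graphs $x=f_t(r)$, $x=g_t(r)$ over the $r$-axis, extracts sign and concavity information directly from~(\ref{ode:graph}) and its derivative~(\ref{ode2:graph}), and shows (Propositions~\ref{gamma:small:height}--\ref{beta:crossing}) that for small $t$ the geodesic climbs to height $b_t\gtrsim\sqrt{\log(1/t^2)}$, crosses the $r$-axis twice, and terminates at a horizontal-tangent point $Q_t$ in the open first quadrant. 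The shooting value $x_*$ is then defined as the supremum of $t$ for which this convexity-and-crossing property $\mathscr{P}(t)$ persists, and maximality forces $Q_{x_*}$ onto the $r$-axis.

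Several of your concrete assertions are incorrect. The cylinder $r\equiv\sqrt{2(n-1)}$ is \emph{not} a barrier for other geodesics: uniqueness for~(\ref{SSEq}) forbids only tangential contact, and the sphere profile already crosses the cylinder transversally. Your dynamical picture is also off: as $x_0\downarrow 0$ the geodesics $S[x_0]$ do not oscillate about the cylinder; they rise far above it and oscillate back and forth across the $r$-axis (see Section~\ref{immersed shrinkers}). The quantity $\tfrac12(x^2+r^2)$ is not monotone along these curves. Most seriously, your intermediate-value argument on $\Delta\alpha$ cannot work as stated: the $\tfrac{n-1}{r}$ singularity in~(\ref{SSEq}) forces any geodesic that actually reaches $r=0$ to arrive perpendicularly, so on your set $U$ the accumulated angle $\Delta\alpha$ takes values only in $\pi\mathbb{Z}$ and is locally constant; continuity then yields no intermediate value. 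One must instead track a quantity that varies continuously in the interior of $\mathbb{H}$---the position of $Q_t$---which is exactly what the paper does.
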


We begin by considering the one-parameter shooting problem: $$S[t] \textrm{ is the solution to~(\ref{SSEq}) with } S[t] (0) = (t, 0) \textrm{ and } S[t]' (0) = (0, 1).$$ Notice that the solutions $S[0]$ and $S[\sqrt{2n}]$ are the profile curves for a flat $\mathbb{R}^n$ self-shrinker and the round $\mathbb{S}^n$ self-shrinker, respectively. After describing the shape of $S[t]$ for small $t>0$, we will show that there is $x_* \in (0,\sqrt{2n})$ so that $S[x_*]$ intersects the $r$-axis orthogonally (see Figure~\ref{shape:sphere4}). Since the geodesic equation is symmetric with respect to reflections about the $r$-axis, the geodesic $S[x_*]$ intersects the $x$-axis orthogonally at two points (see Figure~\ref{fig:is}) and is the profile curve for an immersed $\mathbb{S}^n$ self-shrinker.

\vspace{5pt}
{\bf Step 1}: The first step in the proof is to show that $S[t]$ has the following shape for small $t>0$:

 \begin{figure}[H]
 \label{shape:sphere3}
 \centering
 \includegraphics[height=5cm]{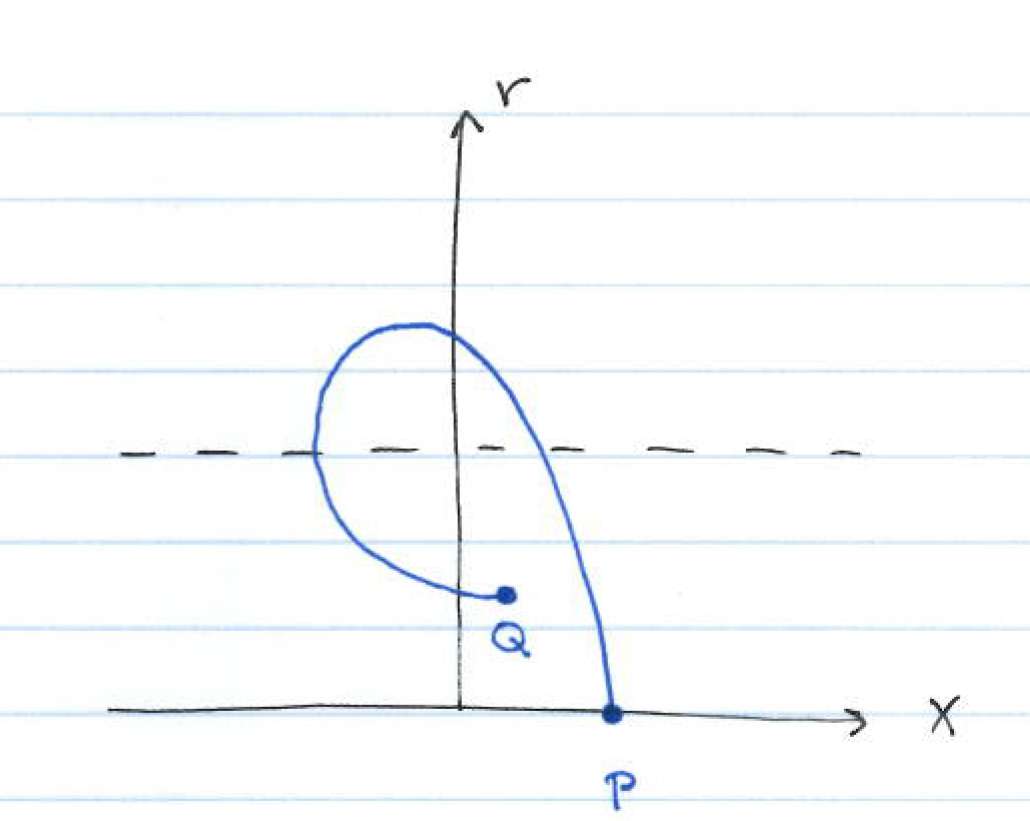}
 \caption{The shape of the geodesic $S[t]$ when $t>0$ is sufficiently small.}
 \end{figure}

In order to do this, we need to understand the behavior of the geodesic $S[t]$ as it travels away from the $x$-axis, turns around, and travels back towards the $x$-axis. Following the proof in~\cite{Drugan 2015}, we analyze $S[t]$ by writing it locally as graphs over the $r$-axis. The local graphical component $x=f(r)$ satisfies
 \begin{equation}
\label{ode:graph}
\frac{f''}{1+f'^2} = \left( \frac{r}{2} - \frac{n-1}{r} \right) f'  - \frac{1}{2}f. 
\end{equation}
Taking a derivative of~(\ref{ode:graph}), we have
\begin{equation}
\label{ode2:graph}
\frac{f'''}{1+f'^2} = \frac{2f'(f'')^2}{(1+f'^2)^2} + \left( \frac{r}{2} - \frac{n-1}{r} \right) f'' + \frac{n-1}{r^2} f'.
\end{equation}
Notice that~(\ref{ode2:graph}) is a second order differential equation for $f'$ with positive coefficient on $f'$. Much of the behavior of the geodesics $S[t]$ can be understood by analyzing these equations. We have the following results from~\cite{Drugan 2015}:

\begin{proposition}
\label{gamma:small:height}
For $t>0$, let $f_t$ denote the solution to~(\ref{ode:graph}) with $f_t(0) = t$ and $f_t'(0) = 0$. Then $f_t''<0$, and there is a point $b_t > \sqrt{2(n-1)}$ so that $\lim_{r \to b_t} f'_t(r) = -\infty$ and $f_t(b_t) > -\infty$. Moreover, there exists $\tilde{t} > 0$ so that if $t \in (0,\tilde{t} \,]$, then $$b_t \geq \sqrt{ \log{\frac{2}{\pi t^2}} },$$ $$\frac{-4(n+1)}{ \sqrt{\log{\frac{2}{\pi t^2}}} } \leq f_t(b_t) < 0,$$ and $f_t(r) < 0$, for $t \in (2\sqrt{n}, b_t]$.
\end{proposition}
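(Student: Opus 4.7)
The plan is to handle the proposition in three stages: the global concavity $f_t''<0$; the existence and strict lower bound for the blow-up radius $b_t$, together with $f_t(b_t)>-\infty$; and the quantitative small-$t$ estimates. I will work primarily with the graphical equations~\eqref{ode:graph} and~\eqref{ode2:graph}, switching to the arc-length system~\eqref{SSEq} where convenient.

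For the sign of $f_t''$, I first fix the one-sided initial value: Taylor's theorem gives $f_t'(r)/r\to f_t''(0)$ as $r\to 0^+$, so taking $r\to 0^+$ in~\eqref{ode:graph} yields $f_t''(0) = -(n-1)f_t''(0) - t/2$, hence $f_t''(0) = -t/(2n)<0$. To propagate negativity, I use a first-contact argument in~\eqref{ode2:graph}. If $r_0>0$ were the first zero of $f_t''$, the first two terms on the right of~\eqref{ode2:graph} would vanish at $r_0$; since $f_t'(0)=0$ and $f_t''<0$ on $[0,r_0)$, we have $f_t'(r_0)<0$, so the remaining term $\frac{n-1}{r_0^2}f_t'(r_0)$ is strictly negative. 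This forces $f_t'''(r_0)<0$, contradicting the inequality $f_t'''(r_0)\ge 0$ required for $f_t''$ to reach zero from below.

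For the existence of $b_t$, I pass to the arc-length representation~\eqref{SSEq}. Since $f_t''<0$, the tangent angle $\alpha$ is strictly increasing from $\pi/2$, and $r'=\sin\alpha>0$, so the graph representation is valid on $[0,s_*)$ where $s_*$ is the first time $\alpha=\pi$; the blow-up $f_t'\to -\infty$ corresponds precisely to $\alpha\to\pi^-$, so $b_t := r(s_*)$. To show $s_*<\infty$, I set $\beta = \pi-\alpha$ and derive from~\eqref{SSEq} the coupled behavior $\beta' \approx -(\tfrac{r}{2}-\tfrac{n-1}{r})$ and $r' = \sin\beta\approx\beta$ for small $\beta$; this gives the quadrature $\beta\, d\beta\approx -\tfrac{r}{4}\,dr$, so $\beta$ reaches zero at a finite value of $r$, with the correction terms controlled by a Gronwall estimate. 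Finiteness of $f_t(b_t)$ then follows from $|x'(s)|=|\cos\alpha|\le 1$ on the finite interval $[0,s_*]$. The strict inequality $b_t>\sqrt{2(n-1)}$ comes from evaluating the $\alpha$-equation at $s_*$, giving $\alpha'(s_*) = b_t/2-(n-1)/b_t\ge 0$, with equality ruled out because by uniqueness of the geodesic ODE it would force agreement with the round-cylinder solution $r\equiv\sqrt{2(n-1)}$, contradicting $r(0)=0$.

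For the small-$t$ estimates, I linearize about the flat plane by writing $f_t = t\phi + O(t^2)$, where $\phi$ satisfies $\phi''= (\tfrac{r}{2}-\tfrac{n-1}{r})\phi' - \tfrac{\phi}{2}$ with $\phi(0)=1$, $\phi'(0)=0$. Under $z=r^2/4$ this is Kummer's confluent hypergeometric equation, whose relevant solution $\phi(r)=M(-\tfrac12,\tfrac{n}{2},r^2/4)$ admits the asymptotic $\phi(r)\sim -c_n r^{-(n+1)} e^{r^2/4}$ for a positive constant $c_n$, and whose first positive zero $r_1$ satisfies $r_1<2\sqrt{n}$ for all $n\ge 2$ by direct inspection of the power series (each coefficient $(-1/2)_k/(n/2)_k k!$ is negative for $k\ge 1$). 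A barrier/Gronwall comparison between $f_t$ and $t\phi$, valid while $|t\phi'|\lesssim 1$, then lets the linearization track $f_t$ out to radii of order $\sqrt{\log(1/t)}$; matching the threshold $|t\phi'(b_t)|\asymp 1$ against the exponential asymptotics of $\phi'$ yields the stated lower bound $b_t\ge\sqrt{\log(2/(\pi t^2))}$. The bound $|f_t(b_t)|\le 4(n+1)/\sqrt{\log(2/(\pi t^2))}$ comes from integrating $|f_t'|$ over the short interval $[r_1,b_t]$ near blow-up, where $|f_t'|$ is large but the interval correspondingly short; the sign claim $f_t(r)<0$ on $(2\sqrt{n},b_t]$ then follows from $r_1<2\sqrt{n}$ combined with the sign preservation afforded by the barrier. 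The main technical obstacle is precisely this third-stage comparison: pushing the linear approximation through the exponential-growth regime of $\phi$ out to radii of order $\sqrt{\log(1/t)}$ while controlling the $f'^2$ nonlinearity in~\eqref{ode:graph} requires sharp sub- and super-solutions, and converting the confluent-hypergeometric asymptotics into effective non-asymptotic estimates with the explicit constants appearing in the stated bounds is the most demanding piece of the argument.
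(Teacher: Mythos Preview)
The paper does not prove this proposition. This is a survey, and Proposition~\ref{gamma:small:height} is quoted from \cite{Drugan 2015} with the preface ``We have the following results from~\cite{Drugan 2015}'', after which the paper immediately moves on to interpret the statement. There is therefore no in-paper argument to compare yours against.

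Your three-stage outline is nonetheless reasonable and is consistent with the ODE framework the survey sets up around~\eqref{ode:graph}--\eqref{ode2:graph}. A few remarks. Stage~1 is correct as written. In Stage~2 your quadrature has a slip---the right-hand side should be $-(\tfrac r2-\tfrac{n-1}{r})\,dr$, not $-\tfrac r4\,dr$---but the conclusion that $\beta$ hits zero at a finite radius still follows, and the uniqueness argument for the strict inequality $b_t>\sqrt{2(n-1)}$ is fine. In Stage~3 the identification of the linearized solution with $M(-\tfrac12,\tfrac n2,r^2/4)$ and its asymptotics are correct, but your justification that the first zero $r_1$ lies below $2\sqrt n$ does not follow from ``each coefficient is negative for $k\ge 1$'': that only gives $\phi<1$ on $(0,\infty)$, not the location of the zero. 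You would need an actual estimate showing $M(-\tfrac12,\tfrac n2,n)<0$. More substantively, you have not indicated how the explicit constants $\sqrt{\log(2/(\pi t^2))}$ and $4(n+1)$ arise from your comparison; these are not generic outputs of a linearization argument, and getting them requires non-asymptotic bounds with tracked constants, which is precisely the ``most demanding piece'' you flag but do not address. So the proposal is a plausible roadmap rather than a proof, with the quantitative third stage still essentially open.
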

This proposition tells us that when $t>0$ is sufficiently small, the first component of $S[t]$ written as a graph over the $r$-axis is concave down, decreasing, and it crosses the $r$-axis, before it blows-up at the point $B_t = ( f_t(b_t), b_t)$. (Here, the graphical component blows-up in the sense that the tangent line at $B_t$ is orthogonal to the $r$-axis.) 

 \begin{figure}[H]
 \centering
 \includegraphics[height=5cm]{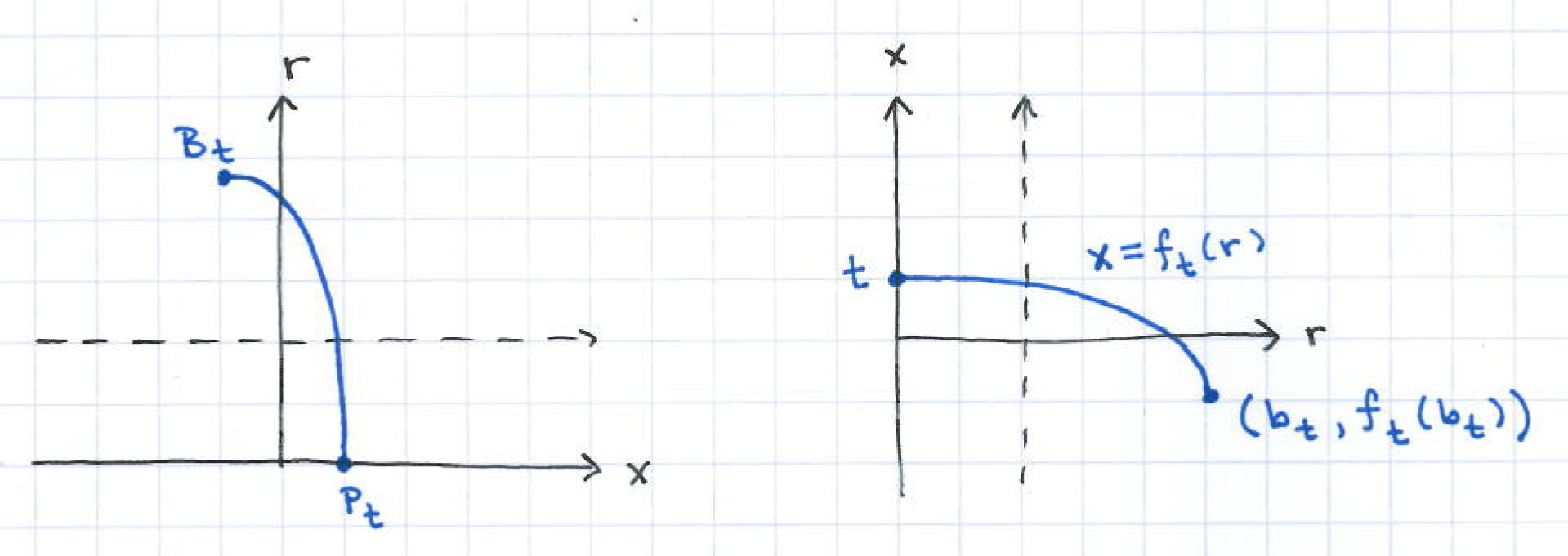}
 \caption{The initial shape of the first graphical component of $S[t]$ written as a graph over the $r$-axis when $0<t<\tilde{t}$.}
 \end{figure}

In addition, Proposition~\ref{gamma:small:height} tells us $b_t \to \infty$ and $f_t(b_t) \to 0$ as $t \to 0$. This behavior allows us to study the second component of $S[t]$ written as a graph over the $r$-axis when $t>0$ is sufficiently small.

\begin{proposition}
\label{beta:crossing}
Let $\tilde{t} >0$ be given as in the conclusion of Proposition~\ref{gamma:small:height}. For $t \in (0, \tilde{t} \,]$, let $B_t = ( f_t(b_t), b_t)$, where $f_t$ is the solution to~(\ref{ode:graph}) with $f_t(0) = t$ and $f_t'(0) = 0$, and define $g_t$ to be the unique solution to~(\ref{ode:graph}) with $g_t(b_t) = f_t(b_t)$ and $\lim_{r \to b_t} g_t'(r) = \infty$. Then there exists $0 < \bar{t} <\tilde{t}$ so that for $ t \in (0 ,\bar{t} \,]$, the solution $g_t$ has the following properties: there is $a_t \in (0,\sqrt{2(n-1)})$ so that $g_t$ is a maximally extended solution to~(\ref{ode:graph}) on the interval $(a_t, b_t)$; there is a point $c_t \in (a_t, b_t)$ so that $g_t'(c_t) = 0$; $g_t'' > 0$; and $0 < g_t(a_t) < \infty$.
\end{proposition}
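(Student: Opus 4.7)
The plan is to study the unique smooth continuation of the geodesic through $B_t$, which in a left-neighborhood of $r=b_t$ can be written as a graph $x=g_t(r)$. At $B_t$ the profile curve has horizontal tangent ($\alpha=\pi$) in the $(x,r)$-plane, so the arc-length formulation~(\ref{SSEq}) produces this extension, and the prescribed condition $\lim_{r\to b_t}g_t'(r)=+\infty$ is precisely the matching with the $f_t$-branch at $B_t$. Substituting into~(\ref{ode:graph}): with $r$ close to $b_t>\sqrt{2(n-1)}$ and $g_t'$ large positive, the right-hand side is dominated by $\bigl(\tfrac{r}{2}-\tfrac{n-1}{r}\bigr)g_t'>0$, so $g_t''=\bigl(\tfrac{r}{2}-\tfrac{n-1}{r}\bigr)(1+(g_t')^{2})g_t'-\tfrac{1}{2}(1+(g_t')^{2})g_t$ is large and positive in a left-neighborhood of $b_t$.

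The main task is then to propagate $g_t''>0$ along the full interval of existence. I would switch to the angle $\alpha(s)$ from~(\ref{SSEq}) and work with the Euclidean geodesic curvature $\kappa(s):=\alpha'(s)$. A short calculation using $g_t'(r)=\cot\alpha(s)$ gives
\[
g_t''(r)=-\frac{\kappa(s)}{\sin^{3}\alpha(s)},
\]
so on the $g_t$-branch, where $\alpha\in(\pi,2\pi)$ and $\sin\alpha<0$, the inequality $g_t''>0$ is equivalent to $\kappa>0$. The inequality $f_t''<0$ from Proposition~\ref{gamma:small:height} is likewise equivalent to $\kappa>0$ along the $f_t$-arc, and at $B_t$ we compute $\kappa=\tfrac{b_t}{2}-\tfrac{n-1}{b_t}>0$. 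Differentiating $\kappa=\bigl(\tfrac{n-1}{r}-\tfrac{r}{2}\bigr)\cos\alpha+\tfrac{x}{2}\sin\alpha$ along the geodesic yields the linear scalar ODE
\[
\kappa'(s)+Q(s)\kappa(s)=-\frac{n-1}{r^{2}}\sin\alpha\cos\alpha,\qquad Q=\bigl(\tfrac{n-1}{r}-\tfrac{r}{2}\bigr)\sin\alpha-\tfrac{x}{2}\cos\alpha.
\]
Using an integrating factor together with the quantitative estimates $b_t\ge\sqrt{\log(2/\pi t^{2})}$ and $|f_t(b_t)|\le 4(n+1)/\sqrt{\log(2/\pi t^{2})}$ from Proposition~\ref{gamma:small:height}, I would show that for $t$ sufficiently small, $\kappa$ remains positive as $\alpha$ sweeps the range $(\pi,2\pi)$.

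With $\kappa>0$ secured, $\alpha(s)$ is strictly increasing on the extension and sweeps monotonically from $\pi$ upward. The value $\alpha=3\pi/2$ produces $g_t'(r)=\cot\alpha=0$, giving the interior critical point $c_t$ at which convexity places the minimum of $g_t$. As $\alpha\to 2\pi^{-}$, $g_t'(r)=\cot\alpha\to-\infty$, so the graph terminates with a vertical tangent at some $r=a_t>0$; because $(x(s),r(s))$ is smooth in arc-length, $g_t(a_t)$ is automatically finite. To show $a_t<\sqrt{2(n-1)}$, I would compare with the round-cylinder geodesic $r\equiv\sqrt{2(n-1)}$: the strictly convex arc $g_t$ cannot be tangent to this horizontal line by ODE uniqueness, so it must cross it transversally, and the small-$t$ estimates force the crossing to occur at $r$ below $\sqrt{2(n-1)}$. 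Finally, $g_t(a_t)>0$ follows by tracking the $x$-displacement on the second half of the arc: parametrizing by $\alpha$ via $ds=d\alpha/\kappa$, the total $x$-gain while $\alpha\in(3\pi/2,2\pi)$ equals $\int_{3\pi/2}^{2\pi}\frac{\cos\alpha}{\kappa}\,d\alpha$, which dominates $|g_t(c_t)|$ (itself comparable to $|f_t(b_t)|$, small by Proposition~\ref{gamma:small:height}).

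The principal obstacle is the persistence of $\kappa>0$. The forcing $-(n-1)r^{-2}\sin\alpha\cos\alpha$ has the wrong sign on the subinterval $\alpha\in(\pi,3\pi/2)$, so a naive maximum principle for $\kappa$ fails: at an interior zero the ODE only yields $\kappa'<0$, which is compatible with $\kappa$ descending from positive to negative values. The small-$t$ hypothesis enters precisely here — the initial curvature $\kappa(B_t)\approx b_t/2$ is very large while the unfavorable angular interval is traversed in arc-length of order $1/b_t$, so the net decrement in the integrating-factor-weighted quantity $\mu\kappa$ can be quantitatively controlled and positivity retained. Related quantitative care is then required to secure the transversal crossing of the cylinder and the positivity of $g_t(a_t)$.
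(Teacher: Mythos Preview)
The paper is a survey and does not itself prove this proposition; it quotes the result from \cite{Drugan 2015} and indicates only that the analysis there proceeds by writing $S[t]$ locally as graphs over the $r$-axis and exploiting equations~(\ref{ode:graph}) and~(\ref{ode2:graph}). Your approach via the arc-length curvature $\kappa=\alpha'$ is a reparametrization of that same strategy: your scalar ODE $\kappa'+Q\kappa=-(n-1)r^{-2}\sin\alpha\cos\alpha$ is precisely the angle-variable form of the third-derivative identity~(\ref{ode2:graph}), and the obstacle you isolate---the forcing having the wrong sign on $\alpha\in(\pi,3\pi/2)$---is exactly the statement that at an interior zero of $g_t''$ with $g_t'>0$, equation~(\ref{ode2:graph}) yields $g_t'''>0$, so a naive maximum principle fails there. (By contrast, once $g_t'<0$ the same equation \emph{does} protect $g_t''>0$.) Both routes therefore require the quantitative input from Proposition~\ref{gamma:small:height} in the same place, and you identify this correctly. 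The graph formulation has the mild practical advantage of staying in a single independent variable $r$, whereas your integrating-factor estimate must track $r$, $x$, and $\alpha$ simultaneously; otherwise the two are interchangeable.

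One simplification: the cylinder comparison for $a_t<\sqrt{2(n-1)}$ is unnecessary. Once $\kappa>0$ is secured through $\alpha\to 2\pi^{-}$, evaluating $\kappa=(n-1)/r-r/2$ at that endpoint (where $\cos\alpha=1$, $\sin\alpha=0$) forces $a_t<\sqrt{2(n-1)}$ directly. The estimate $g_t(a_t)>0$ via the $x$-displacement integral $\int\cos\alpha\,\kappa^{-1}\,d\alpha$ is the genuinely delicate remaining step; your sketch is plausible but would need the sharpest form of the small-$t$ bounds on both $\kappa$ and $|g_t(c_t)|$ to close, and the paper gives no further detail here either.
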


This proposition tells us that when $t>0$ is sufficiently small, the second component of $S[t]$ written as a graph over the $r$-axis is concave up, it crosses the $r$-axis, and it blows-up at the point $Q_t = ( g_t(a_t), a_t)$, where $0 < a_t < \sqrt{2(n-1)}$ and $ 0 < g_t(a_t) < \infty$.

 \begin{figure}[H]
 \centering
 \includegraphics[height=5cm]{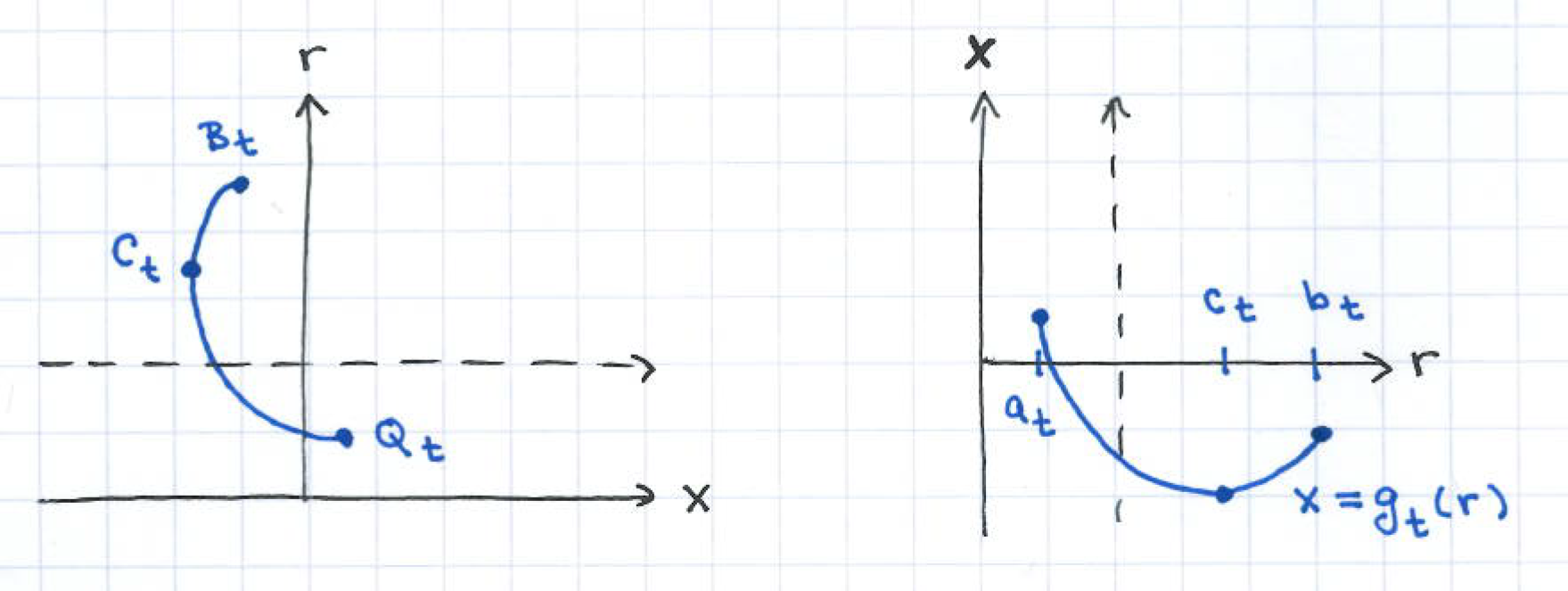}
 \caption{The initial shape of the second graphical component of $S[t]$ written as a graph over the $r$-axis when $0<t<\bar{t}$.}
 \end{figure}

Combining these two propositions shows that $S[t]$ has the initial shape illustrated in Figure~\ref{shape:sphere3} when $0<t<\tilde{t}$.

\vspace{5pt}
{\bf Step 2}: The second step is to increase the shooting parameter $t>0$ and show that there is $x_* \in (0,\sqrt{2n})$ so that $S[x_*]$ has the following shape:

 \begin{figure}[H]
 \centering
 \includegraphics[height=5cm]{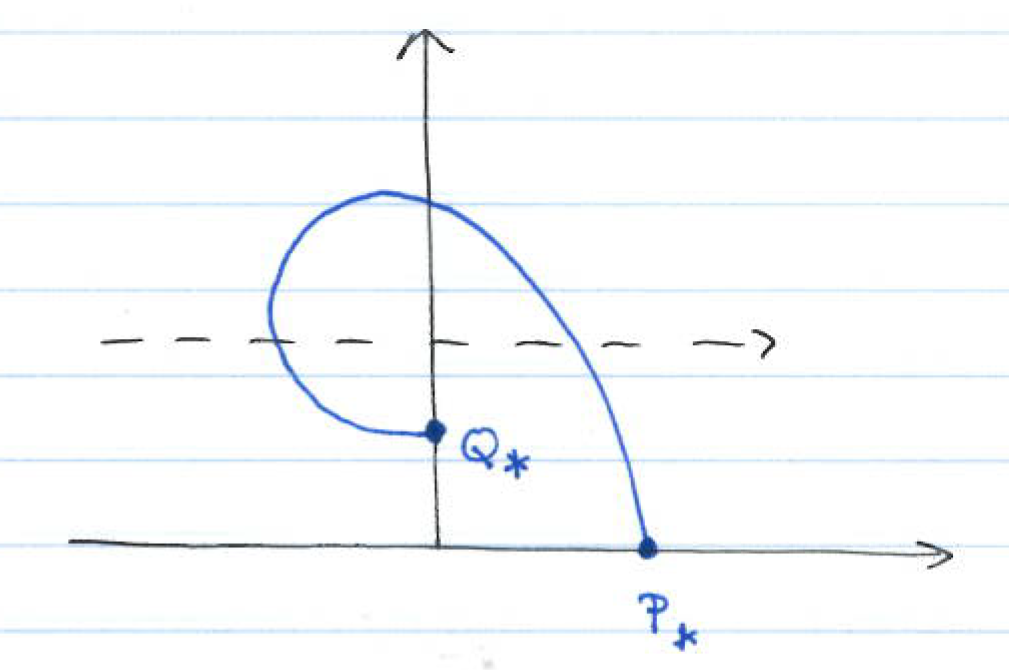}
 \caption{A sketch of the geodesic $S[x_*]$.}
 \label{shape:sphere4}
 \end{figure}

Using the notation from Step 1, we know that for $0 < t  \leq \bar{t}$, the geodesic $S[t]$ is strictly convex as it travels from $P_t = (t,0)$ to $B_t$ to $C_t$ to $Q_t$, and $Q_t$ lies in the same quadrant as $P_t$. We note that the the geodesic arc from $P_t$ to $Q_t$ is strictly convex in the sense that $(x' r'' - r' x'')$ is non-vanishing along the curve.

 \begin{figure}[H]
 \centering
 \includegraphics[height=5cm]{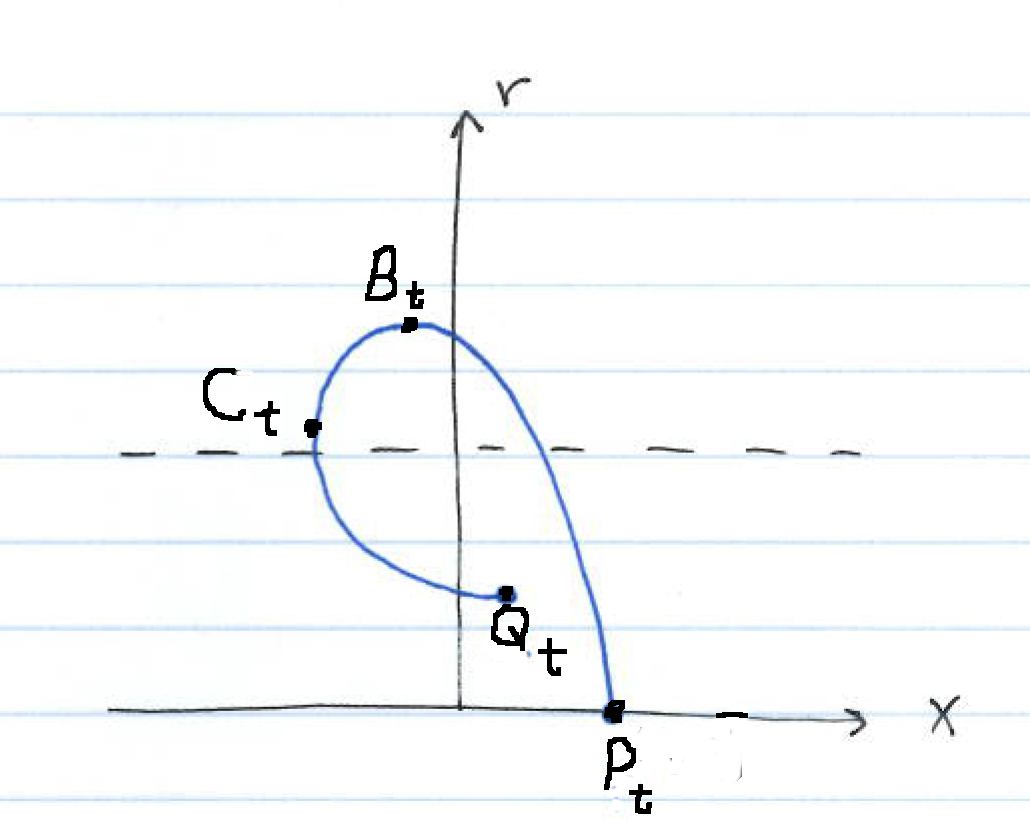}
 \caption{A geodesic arc $S[t]$ with property $\mathscr{P}(t)$.}
 \label{fig:BCQ}
 \end{figure}

To define the initial shooting coordinate $x_*$, we introduce the property $\mathscr{P}(t)$ for the geodesic $S[t]$. As in Figure~\ref{fig:BCQ}, we say that $\mathscr{P}(t)$ holds if
\begin{itemize}
\item[1.] The geodesic $S[t]$ contains points $B_t$, $C_t$, and $Q_t$ as described in Step 1.
\item[2.] The geodesic $S[t]$ is strictly convex as it travels from $P_t$ to $Q_t$. 
\item[3.] The geodesic $S[t]$ crosses the $r$-axis between $P_t$ and $B_t$ and then crosses back over between $C_t$ and $Q_t$.
\end{itemize}
We define the initial shooting coordinate $x_*$ by
\begin{equation}
\label{shoot:value}
x_* = \sup \{ x > 0 \, : \, \mathscr{P}(t) \textrm{ holds for all } t \in (0,x] \}.
\end{equation}
It follows from Step 1 that $x_*$ is well-defined and $x_* \geq \bar{t}$. 

By construction, for $0<t<x_*$, the geodesic $S[t]$ satisfies property $\mathscr{P}(t)$, and by continuous dependence on initial conditions, $S[t]$ converges to $S[x_*]$ as $t \uparrow x_*$. The following result summarizes the main properties of $S[x_*]$ established in~\cite{Drugan 2015}.
\begin{proposition}
Let $x_*$ be the initial shooting value defined in~(\ref{shoot:value}). Then 
\begin{itemize}
\item[1.] $x_* < \sqrt{2n}$.
\item[2.] As $t \uparrow x_*$, the points $B_t$, $C_t$, $Q_t$ on the geodesics $S[t]$ converge to distinct points $B_{x_*}$, $C_{x_*}$, $Q_{x_*}$ on $S[x_*]$ in a compact subset of $\mathbb{H}$.
\item[3.] The geodesic $S[x_*]$ is strictly convex as it travels from $P_{x_*}$ to $Q_{x_*}$.
\item[4.] The geodesic $S[x_*]$ crosses the $r$-axis between $P_{x_*}$ and $B_{x_*}$.
\item[5.] The point $Q_{x_*}$ lies on the $r$-axis.
\end{itemize}
\end{proposition}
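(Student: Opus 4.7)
My plan is to combine continuous dependence of solutions to \eqref{SSEq} on initial data with the maximality in the definition of $x_*$. Items (2)-(3) will come from passing to a limit along a sequence $t_i \uparrow x_*$, item (1) from a comparison with the round semicircle, and items (4)-(5) from analyzing which part of $\mathscr{P}$ first fails as $t$ approaches $x_*$.

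\textbf{The bound $x_* < \sqrt{2n}$.} The geodesic $S[\sqrt{2n}]$ is the round semicircle $x^2+r^2=2n$, which as a graph over the $r$-axis has first graphical component $x=\sqrt{2n-r^2}$ blowing up on the $r$-axis at $(0,\sqrt{2n})$ and second graphical component $x=-\sqrt{2n-r^2}$ reaching the $x$-axis at $(-\sqrt{2n},0)$. In particular, the analog of $Q_t$ for the semicircle lies on the negative-$x$ side of the $r$-axis. By continuous dependence on the initial parameter, $S[t]$ is close to $S[\sqrt{2n}]$ on compact sets when $t$ is close to $\sqrt{2n}$, so the analog of $Q_t$ inherits a negative $x$-coordinate, violating the last clause of $\mathscr{P}(t)$. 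Hence $x_* < \sqrt{2n}$.

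\textbf{Passing to the limit.} For $t_i \uparrow x_*$, property $\mathscr{P}(t_i)$ provides points $B_{t_i} = (f_{t_i}(b_{t_i}), b_{t_i})$, $C_{t_i} = (g_{t_i}(c_{t_i}), c_{t_i})$, and $Q_{t_i} = (g_{t_i}(a_{t_i}), a_{t_i})$ with $a_{t_i} < c_{t_i} < b_{t_i}$. The cylinder $\{r=\sqrt{2(n-1)}\}$ is itself a geodesic of \eqref{SSEq}, so by uniqueness of geodesics with given tangent it acts as a strict barrier, giving $a_{t_i} < \sqrt{2(n-1)} < b_{t_i}$ with the strict inequality preserved in the limit. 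Together with the upper bound $t_i \leq x_* < \sqrt{2n}$ and the singular term $\frac{n-1}{r}$ in \eqref{SSEq} (which prevents a geodesic with nonzero tangent from reaching $r=0$), this confines the three points to a common compact subset of $\mathbb{H}$. Passing to a subsequence yields distinct limit points $B_{x_*}$, $C_{x_*}$, $Q_{x_*}$ on $S[x_*]$, establishing (2). Non-vanishing of $x'r''-r'x''$ along the approximants survives the limit via a unique-continuation argument applied to the linear second-order ODE \eqref{ode2:graph} for $f'$: a zero at an interior point of the limit arc would propagate to force $S[x_*]$ to be a straight line, contradicting the geometry. This gives (3).

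\textbf{Locating the degeneracy.} By the supremum definition of $x_*$, property $\mathscr{P}(x_*)$ must fail, for otherwise continuous dependence would propagate $\mathscr{P}$ to an open interval containing $x_*$, contradicting maximality. Since (2) and (3) already establish conditions 1 and 2 of $\mathscr{P}$, the failure must come from condition 3, i.e., from one of the two $r$-axis crossings. A quantitative analysis built from the estimates of Propositions \ref{gamma:small:height}-\ref{beta:crossing} together with the bound $x_* < \sqrt{2n}$ shows that as $t$ grows the $x$-coordinate of $Q_t$ decreases to $0$ strictly before that of $B_t$ does; hence the failing crossing is the second one, which gives (5) once one observes that $g_{t_i}(a_{t_i}) > 0$ for all $i$ forces the limit $g_{x_*}(a_{x_*}) \geq 0$. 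Property (4) follows since $B_{x_*}$ is then still strictly on the negative-$x$ side. The main technical obstacle is the compactness argument in the limit passage: establishing sharp enough barriers from \eqref{SSEq} so that no pair among $a_{t_i}, c_{t_i}, b_{t_i}$ collapses and none escapes to $0$ or $\infty$, and -- most delicately -- showing that $Q_t$ reaches the $r$-axis strictly before $B_t$ does as $t$ approaches $x_*$. This rests on combining the graphical estimates of Propositions \ref{gamma:small:height}-\ref{beta:crossing} with the strict convexity of (3) to control the global shape of $S[t]$ uniformly in $t$.
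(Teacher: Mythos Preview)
The paper does not actually prove this proposition in the text; it states it as a summary of results ``established in~\cite{Drugan 2015}.'' So there is no in-paper proof to compare against, and your outline has to be judged on its own merits. The overall architecture---continuous dependence together with maximality of $x_*$---is correct, but two of the steps you treat as routine are in fact the heart of the matter and are not established by what you wrote.

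\emph{Compactness.} You claim the cylinder barrier and the singular coefficient $\tfrac{n-1}{r}$ trap $B_t$, $C_t$, $Q_t$ in a fixed compact subset of $\mathbb{H}$ as $t\uparrow x_*$. The cylinder only gives $a_t<\sqrt{2(n-1)}<b_t$; what you need are the opposite bounds ($b_t$ bounded above, $a_t$ bounded away from $0$), and neither follows from your argument. Propositions~\ref{gamma:small:height}--\ref{beta:crossing} are only valid for small $t$, and the bound on $b_t$ there is a \emph{lower} bound, going the wrong way. You also do not rule out collapse of adjacent points (e.g.\ $C_t\to B_t$), which the paper's remark after the proposition flags as the mechanism that actually secures strict convexity. \emph{Which crossing fails.} You reduce items (4)--(5) to the claim that $Q_t$ reaches the $r$-axis before $B_t$ does, then invoke ``a quantitative analysis'' from Propositions~\ref{gamma:small:height}--\ref{beta:crossing}. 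Those propositions do not apply near $t=x_*$, so this invocation is unjustified; and your deduction that only condition~3 of $\mathscr{P}$ can fail presupposes that conditions~1--2 persist to $x_*$, which is exactly items (2)--(3)---so the reasoning is circular until the compactness is settled. You correctly identify these as the delicate points at the end of the proposal, but identifying them is not the same as proving them.
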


It follows from this proposition along with the symmetry of solutions to~(\ref{geo:eqn}) with respect to reflections about the $r$-axis shows that $S[x_*]$ is the profile curve for an immersed $\mathbb{S}^n$ self-shrinker with the shape illustrated in Figure~\ref{fig:is}.

\begin{remark}
In Step 2, the geodesic $S[x_*]$ is analyzed as the limit of the geodesics $S[t]$ as $t \uparrow x_*$. Now, each geodesic $S[t]$ is strictly convex as a curve from $P_t$ to $Q_t$. However, this does not imply that $S[x_*]$ is strictly convex. In fact, without further argument, we can only conclude that $S[x_*]$ is convex in the sense that $(x' r'' - r' x'')$ may vanish but does not change sign. There are indeed examples where the limiting geodesic may not be strictly convex. For instance, the geodesics $S[t]$ converge to the $r$-axis as $t \downarrow 0$. In the case of $S[x_*]$, proving the existence of the point $C_{x_*}$ ensures that $S[x_*]$ is strictly convex as it travels from $P_{x_*}$ to $Q_{x_*}$.
\end{remark}


\subsection{A collection of shooting problems for closed self-shrinkers}
\label{immersed shrinkers}

In this section, we sketch the behavior of geodesics for three shooting problems and illustrate how this behavior can be used to construct closed self-shrinkers. The analysis for the results stated in this section can be found in~\cite{Drugan Kleene 2017}.

\subsubsection{A second immersed sphere self-shrinker}

Consider the shooting problem: $$S[t] \textrm{ is the solution to~(\ref{SSEq}) with } S[t] (0) = (t, 0) \textrm{ and } S[t]' (0) = (0, 1).$$ For this shooting problem we consider the geodesics $S[t]$ when $t>0$ is close to 0. Given $N>0$, there exists $\varepsilon >0$ so that when $0<t<\varepsilon$, the geodesic $S[t]$ is strictly convex as it crosses back and forth over the $r$-axis with $N$ local maximums to the left of the $r$-axis and $N$ local minimums to the right of the $r$-axis.
 
 \begin{figure}[H]
 \centering
 \includegraphics[height=5cm]{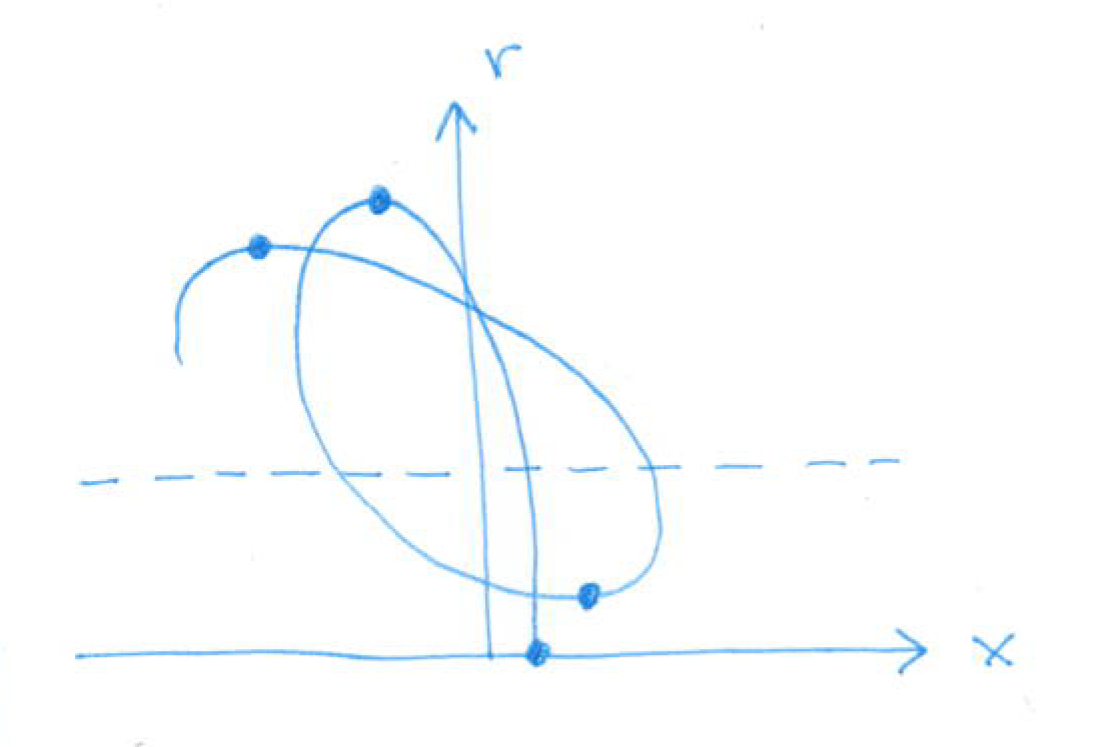}
  \caption{$S[t]$ for $t>0$ close to 0.}
 \end{figure}

As was shown in the previous section, this shooting problem leads to the existence of the profile curve, $S[x_*]$, for an immersed $\mathbb{S}^n$ self-shrinker. We will sketch a proof that there is $x_{**} \in (0, x_*)$ so that $S[x_{**}]$ is the profile curve for a second immersed $\mathbb{S}^n$ self-shrinker.

For $t$ close to $x_*$, the geodesic $S[t]$ has the following shape:

 \begin{figure}[H]
 \centering
 \includegraphics[height=5cm]{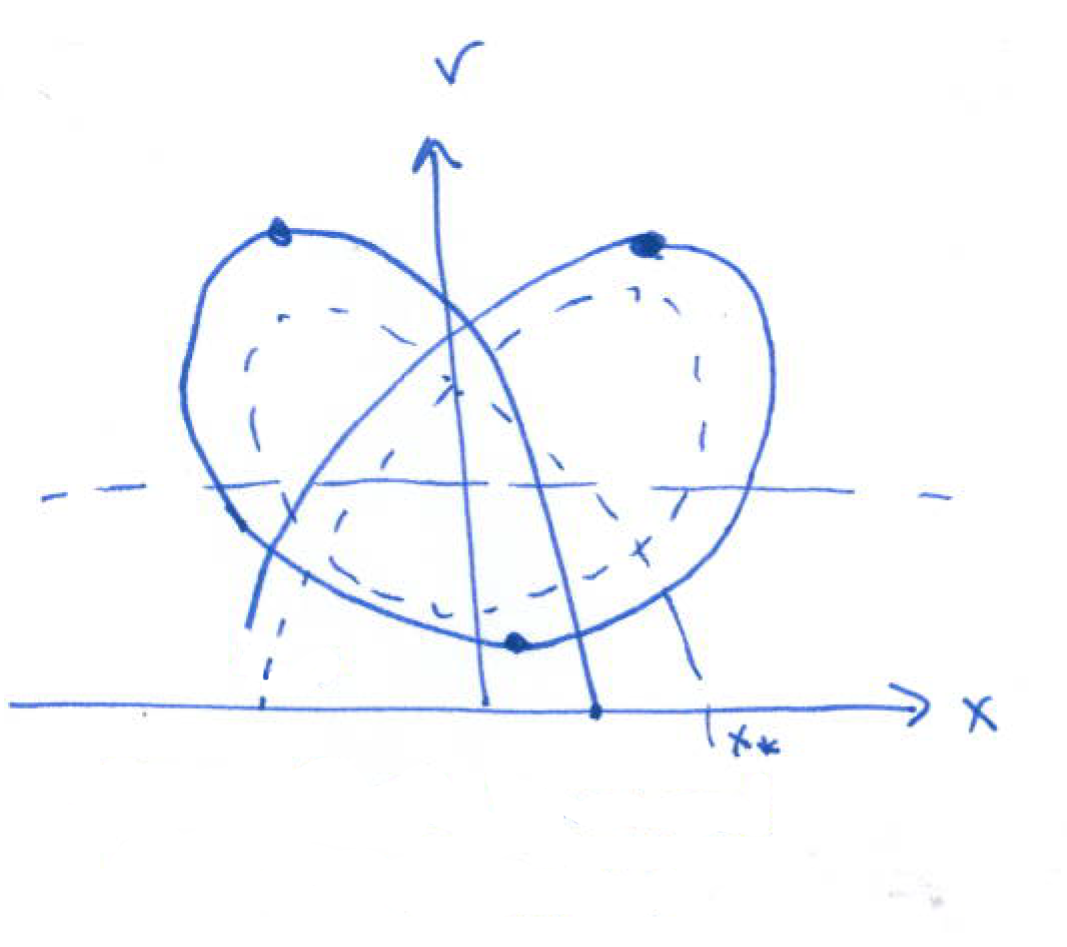}
   \caption{$S[t]$ for $t$ close to $x_*$.}
 \end{figure}

Notice that the second local maximum points lie on different sides of the $r$-axis in the previous two figures. As $t$ varies from $0$ to $x_*$, a continuity argument can be used to show that there is $x_{**} \in (0, x_*)$ so that the second local maximum lies on the $r$-axis. By the symmetry of geodesics with respect to reflections about the $r$-axis, $S[x_{**}]$ intersects the $x$-axis orthogonally at two points and is the profile curve for an immersed $\mathbb{S}^n$ self-shrinker.
 
 \begin{figure}[H]
 \centering
 \includegraphics[height=5cm]{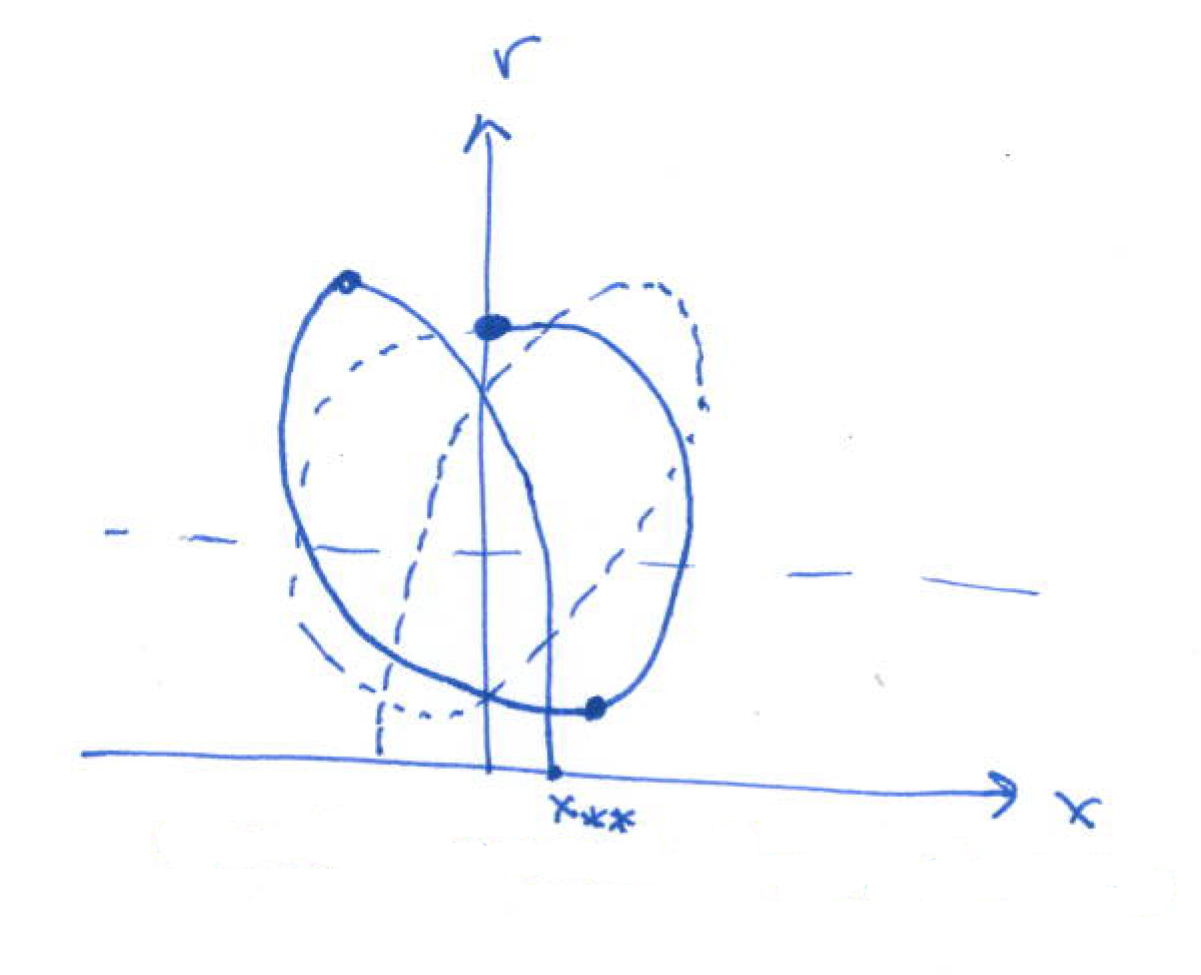}
 \caption{$S[x_{**}]$ is the profile curve for a second immersed $\mathbb{S}^n$ self-shrinker.}
 \end{figure}

\subsubsection{An embedded torus self-shrinker}

Consider the shooting problem: $$T[t] \textrm{ is the solution to~(\ref{SSEq}) with } T[t] (0) = (0,t) \textrm{ and } T[t]' (0) = (1, 0).$$ For this shooting problem we consider the geodesics $T[t]$ when $t>0$ is close to 0. Solutions to this shooting problem behave similarly to solutions from the previous shooting problem, maintaining their strict convexity as they cross back and forth over the $r$-axis with local maximums to the left of the $r$-axis and local minimums (for $s>0$) to the right of the $r$-axis.
 
 \begin{figure}[H]
 \centering
 \includegraphics[height=5cm]{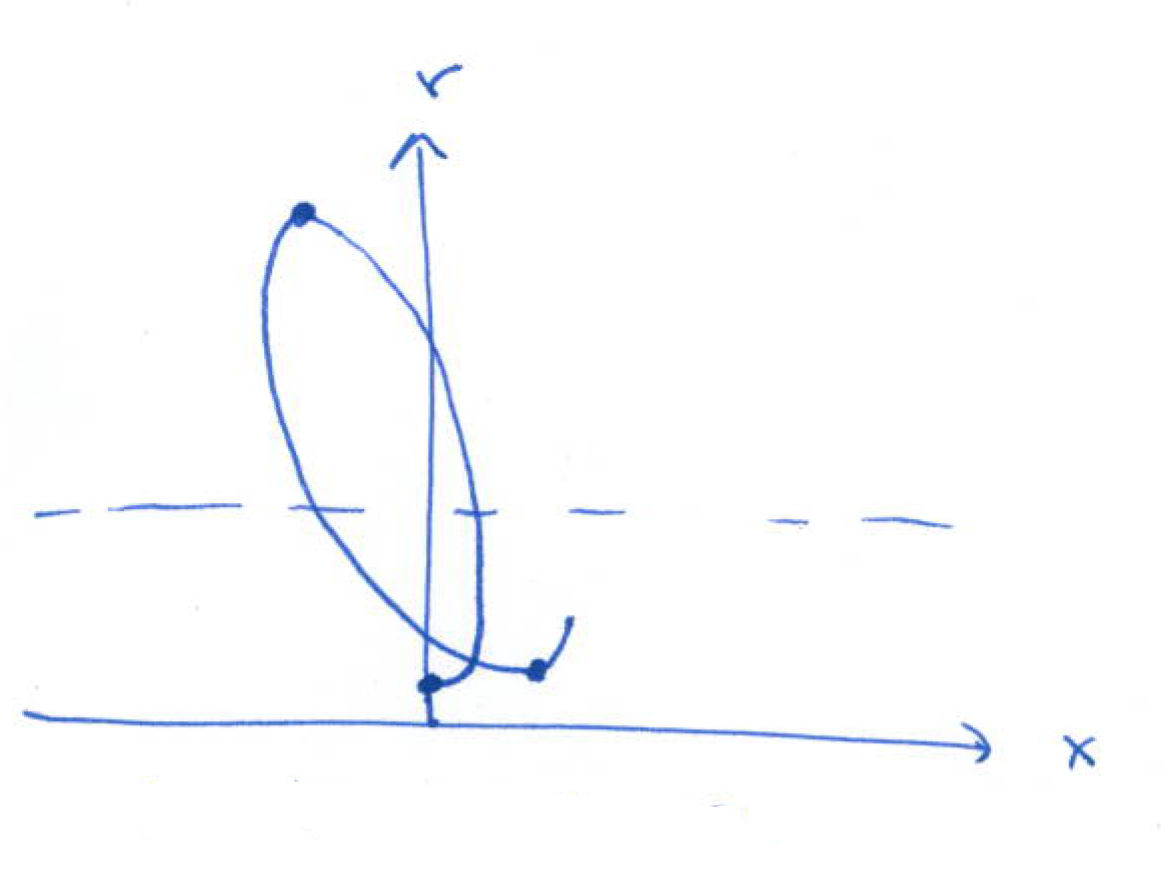}
   \caption{$T[t]$ for $t>0$ close to 0.}
 \end{figure}

Increasing $t$ away from $0$ leads to the profile curve for an immersed $\mathbb{S}^n$ self-shrinker with the shape illustrated in Figure~\ref{fig:is}. In particular, we have two convex geodesic arcs with local maximums on opposite sides of the $r$-axis. A continuity argument can be used to show that there is a simple closed strictly convex geodesic that intersects the $r$-axis orthogonally at two points. Such a geodesic is the profile curve of an $\mathbb{S}^1 \times \mathbb{S}^{n-1}$ self-shrinker.

 \begin{figure}[H]
 \centering
 \includegraphics[height=5cm]{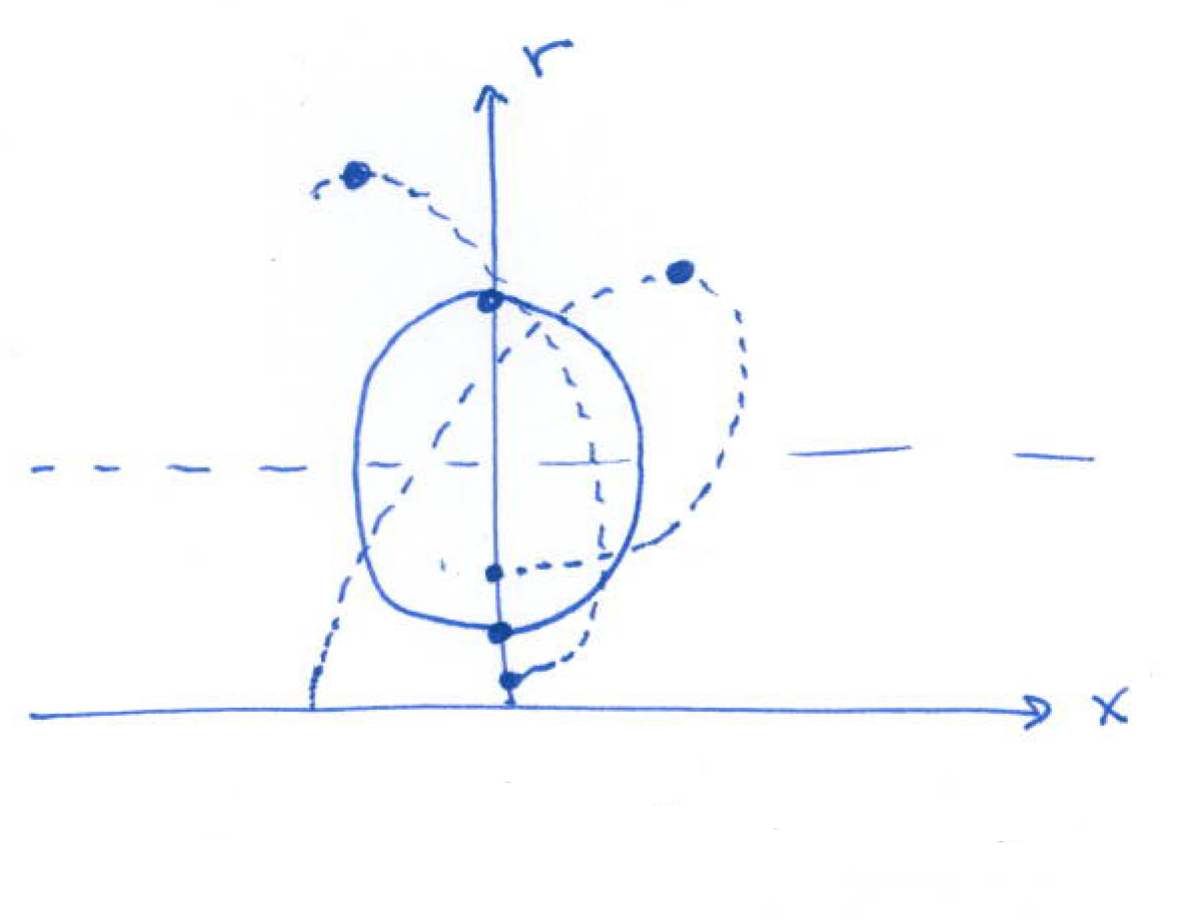}
 \caption{The profile curve of an embedded $\mathbb{S}^1 \times \mathbb{S}^{n-1}$ self-shrinker.}
 \end{figure}

\subsubsection{An immersed torus self-shrinker}

Consider the shooting problem: $$T[t] \textrm{ is the solution to~(\ref{SSEq}) with } T[t] (0) = (0,t) \textrm{ and } S[t]' (0) = (1, 0).$$ For this shooting problem we consider the geodesics $T[t]$ when $t<\sqrt{2(n-1)}$ is close to $\sqrt{2(n-1)}$. Solutions to this shooting problem cross back and forth over the geodesic $r \equiv \sqrt{2(n-1)}$ oscillating in a shape that resembles $\infty$:
 
 \begin{figure}[H]
 \centering
 \includegraphics[height=5cm]{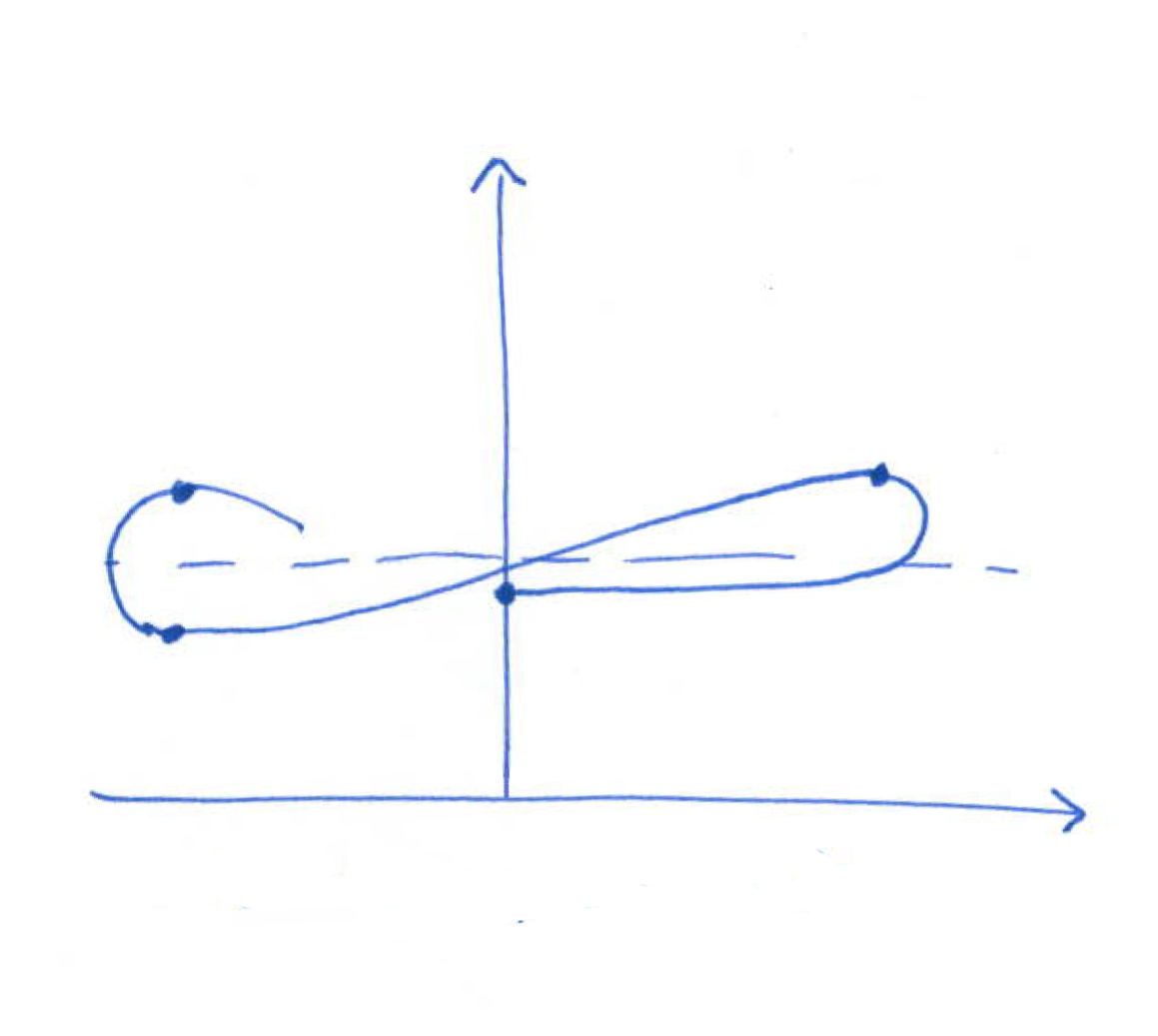}
   \caption{}
 \end{figure}

Decreasing $t$ away from $\sqrt{2(n-1)}$ again leads to the profile curve for an immersed $\mathbb{S}^n$ self-shrinker with the shape illustrated in Figure~\ref{fig:is}. In particular, right before $t$ reaches this profile curve, $T[t]$ has the following shape:

 \begin{figure}[H]
 \centering
 \includegraphics[height=5cm]{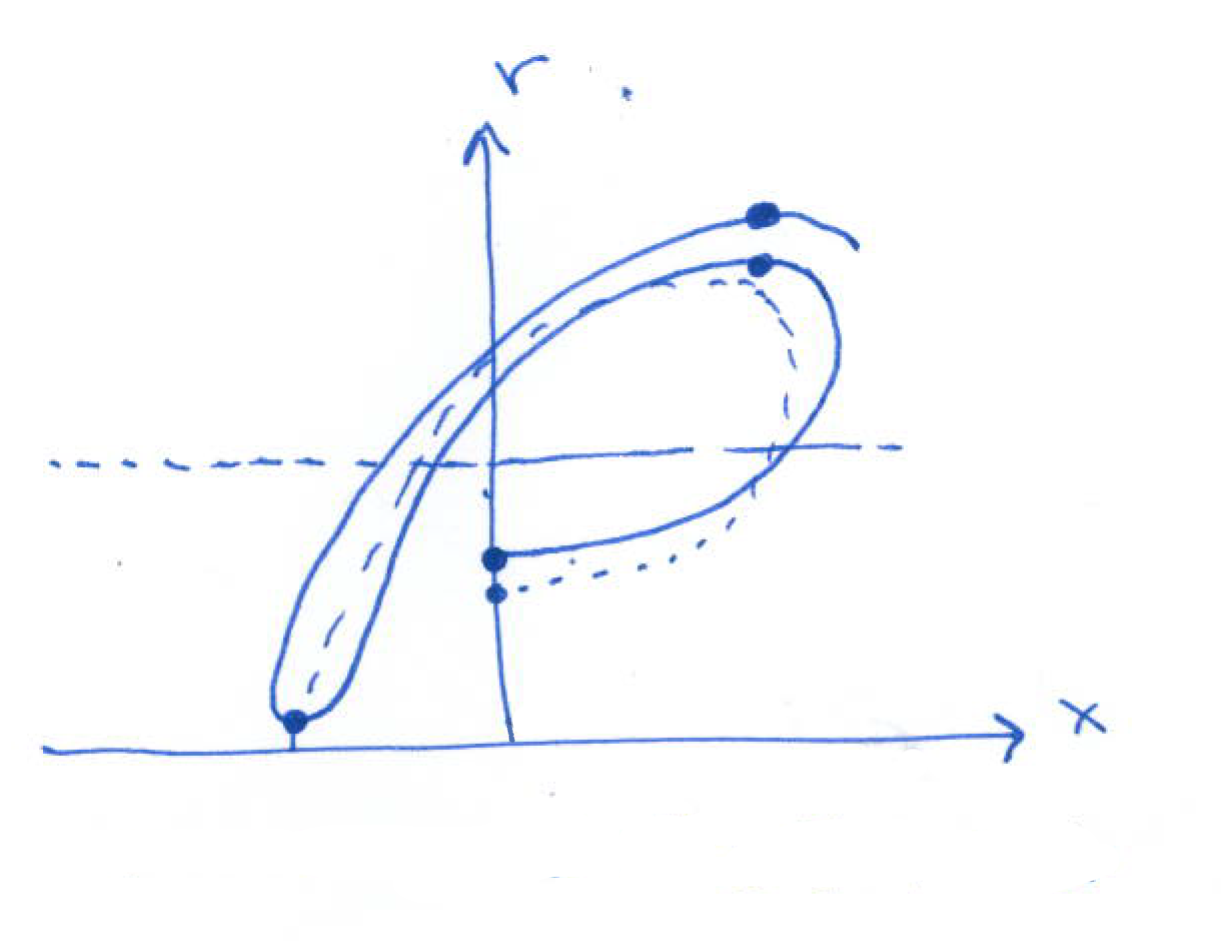}
   \caption{}
 \end{figure}

Comparing the geodesic arcs in the previous two figures, we see that the second local maximum points lie on opposite sides of the $r$-axis. It follows that there is $r_* \in (0, \sqrt{2(n-1)})$ so that $T[r_*]$ intersects the $r$-axis orthogonally at two points: 

 \begin{figure}[H]
 \centering
 \includegraphics[height=5cm]{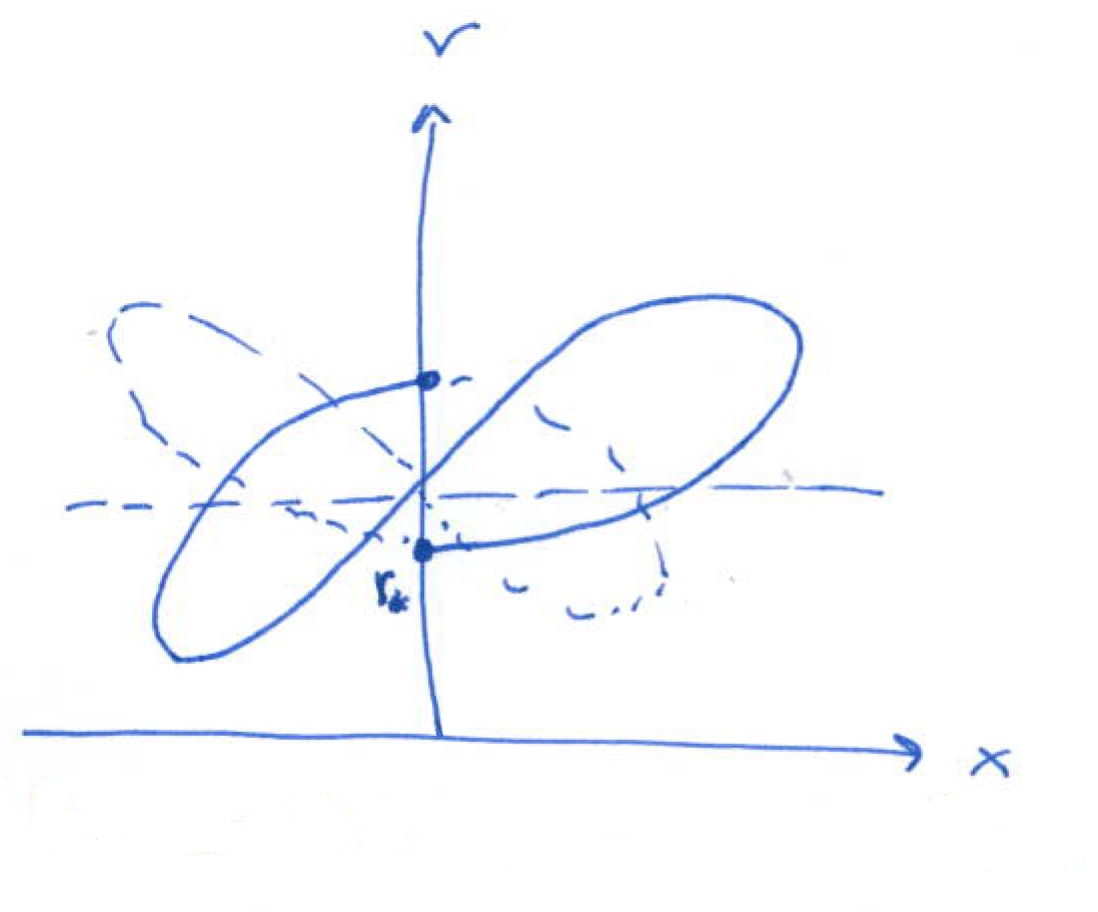}
 \caption{$T[r_*]$ is the profile curve for an immersed $\mathbb{S}^1 \times \mathbb{S}^{n-1}$ self-shrinker.}
 \end{figure}


\subsection{The role of continuity in the shooting method}
\label{Section 5.3}

In this section we discuss the role of continuity in the shooting method for constructing closed geodesics in $(\mathbb{H},g_{Ang})$. First of all, since the shooting method is implemented by varying the initial position and velocity for solutions to (\ref{SSEq}), the continuous dependence of solutions on initial conditions is the main force at work. Next, we observe that the geodesic equation~(\ref{geo:eqn}), re-written as $$\frac{x' r'' - x'' r'}{(x'^2 + r'^2)^{3/2}}  = \left( \frac{n-1}{r}  -\frac{r}{2} \right) \frac{x'}{\sqrt{x'^2 + r'^2}} + \frac{1}{2}x \frac{r'}{\sqrt{x'^2 + r'^2}},$$ gives uniform bounds for the Euclidean curvature of geodesic arcs in a fixed compact subset of $\mathbb{H}$. This tells us that as we vary the initial conditions in the shooting problem, the limit of geodesic arcs in a fixed compact subset of $\mathbb{H}$ will not develop corners or collapse onto a curve with multiplicity. (We note that this type of behavior does occur at the boundary of $\mathbb{H}$ when geodesics converge to half-entire graphs with multiplicity~\cite{Drugan Kleene 2017}). 

Now, the geodesic equation~(\ref{geo:eqn}) is symmetric with respect to reflections across the $r$-axis, so the existence of a closed geodesic can be established by finding a geodesic arc that intersects the $r$-axis orthogonally at two points. One way to find such a geodesic arc is to build a framework where the following three properties hold:

\begin{enumerate}

\item[1.] There is a point $P_1$ so that the geodesic arc obtained from shooting orthogonally to the $r$-axis at $P_1$ with tangent angle 0 contains a point $Q_1$, located to the left of the $r$-axis with tangent angle $\pi$.

 \begin{figure}[H]
 \centering
 \includegraphics[height=4cm]{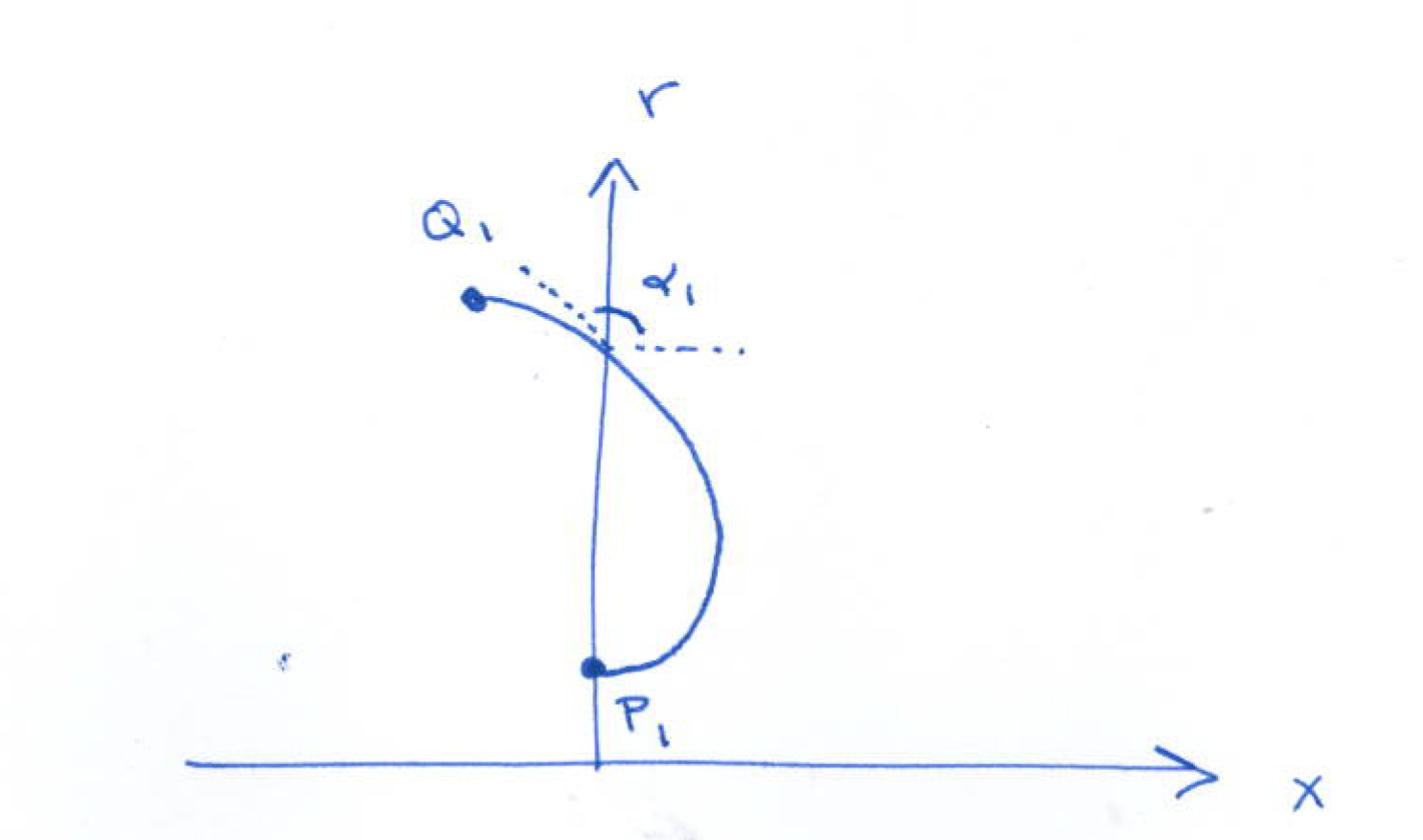}
    \caption{}
 \end{figure}

\item[2.] There is a point $P_2$ so that the geodesic arc obtained from shooting orthogonally to the $r$-axis at $P_2$ with tangent angle 0 contains a point $Q_2$, located to the right of the $r$-axis with tangent angle $\pi$.

 \begin{figure}[H]
 \centering
 \includegraphics[height=4cm]{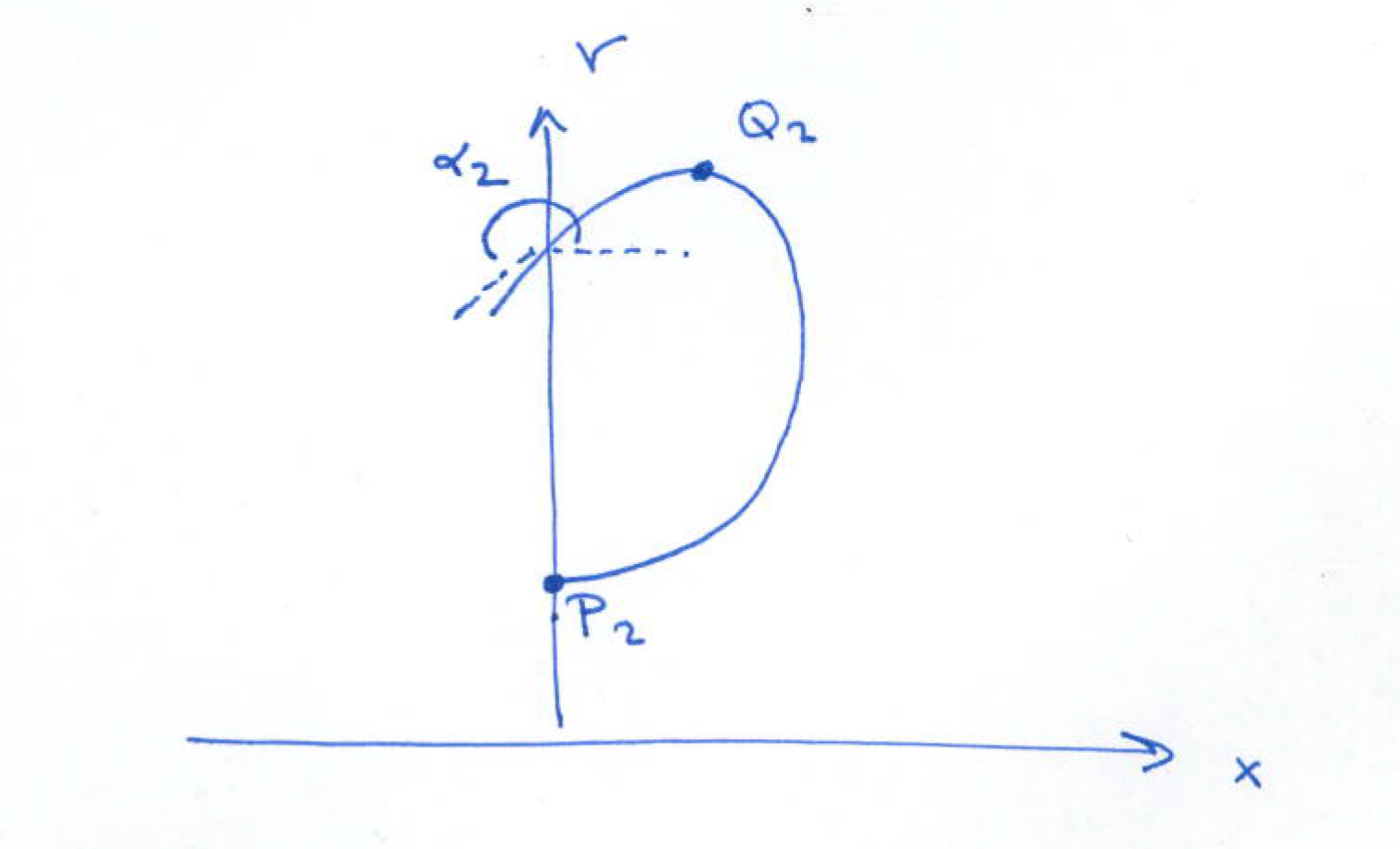}
    \caption{}
 \end{figure}

\item[3.] For each point $P$ on the $r$-axis, between $P_1$ and $P_2$, the geodesic arc obtained from shooting orthogonally to the $r$-axis at $P$ with tangent angle 0 travels from $P$ to a point $Q$ with tangent angle $\pi$. In addition, as $P$ varies from $P_1$ to $P_2$, the geodesic arcs connecting $P$ to $Q$ vary smoothly and remain inside a compact subset of $\mathbb{H}$. (We note that under these assumptions, (\ref{geo:eqn}) implies that the geodesics are strictly convex at $Q$).

\end{enumerate}

\begin{remark}
\label{rm:Q}
In this example, continuity, combined with the behavior of the geodesic arcs through $P_1$ and $P_2$, guarantees that there is a point $P_*$ on the $r$-axis (between $P_1$ and $P_2$) so that the geodesic arc from $P_*$ to $Q_*$ intersects the $r$-axis orthogonally at $Q_*$. Notice how the third property is stated to ensure that continuity can be applied to the problem. For a shooting problem like this, the existence of the points $Q$ needs to be established as continuous dependence on initial conditions does not guarantee that the orthogonal tangent lines (tangent angle $\pi$) will be preserved as $P$ varies. 
\end{remark}

\begin{remark}
\label{rm:alt}
As an alternative to tracking the points $Q$ as $P$ varies from $P_1$ to $P_2$, the crossing angles $\alpha$, where the geodesic arc meets the $r$-axis could be studied instead. In the above illustrations, $\alpha_1$ is less than $\pi$ and $\alpha_2$ is greater than $\pi$. In this case, if the crossing points exist, the crossing angles vary continuously, and the geodesic arcs remain in a compact subset of the domain, then there is a geodesic arc that intersects the $r$-axis with a crossing angle of $\pi$.
\end{remark}

 
\section{Self-shrinkers with bi-rotational symmetry}    \label{Section 6} 

In this section, we consider the problem of constructing closed self-shrinkers with bi-rotational symmetry. A hypersurface in $\mathbb{R}^{M_{1}+M_{2}+2}$ has \emph{bi-rotational symmetry} if it can be written in the form $$\mathbf{X}(s,p_1,p_1) = \left( x(s)  {\mathbf{p}_{1}} , y(s)  {\mathbf{p}_{2}}   \right),$$ where $(x(s),y(s))$ is a curve in the positive quadrant $\mathcal{Q} = \{(x,y) \in \mathbb{R}^2 \, | \, x>0, y>0 \}$, ${\mathbf{p}_{1}}  \in {\mathbb{S}}^{M_{1}}$, ${\mathbf{p}_{2}}  \in {\mathbb{S}}^{M_{2}}$, and  $M_1$ and $M_2$ are positive integers.

Historically, bi-rotational symmetry has produced rich examples of solutions to classical problems. Hsiang \cite{Hsiang 1982} proved the existence of infinitely many distinct bi-rotational CMC immersions of ${\mathbb{S}}^{N-1}$ in ${\mathbb{R}}^{N \geq 4}$. These examples show that Hopf's Theorem does not extend to higher dimensions. Alencar-Barros-Palmas-Reyes-Santo \cite[Theorem 1.2]{Alencar Barros Palmas Reyes Santos 2005} proved the existence of various bi-rotational minimal hypersurfaces in ${\mathbb{R}}^{M_{1}+M_{2}+2 \geq 8}$ with $M_{1}, M_{2} \geq 2$, that are asymptotic or doubly asymptotic to the minimal Clifford cone. Bombieri-De Giorgi-Giusti \cite[Section IV]{Bombieri De Giorgi Giusti 1969} investigated \emph{bi-radial} graphs of the form $x_{2m+1} = F (  \sqrt{ {x_{1}}^{2} + \cdots + {x_{m}}^{2} \, }, \sqrt{{x_{m+1}}^{2} + \cdots + {x_{2m}}^{2}} )$. They proved the existence of an entire, non-flat minimal graph in $\mathbb{R}^9$, which showed that Bernstein's Theorem does not hold in $\mathbb{R}^{N \geq 9}$. Recently, Del Pino-Kowalczyk-Wei \cite{del Pino Kowalczyk Wei 2011} studied the asymptotic behavior of the Bombieri-De Giorgi-Giusti graph and proved the existence of a counterexample to De Giorgi's conjecture for the Allen-Cahn equation in $\mathbb{R}^{N \geq 9}$.

The following reduction tells us that the profile curve of a bi-rotational self-shrinker is a weighted geodesic in $\mathcal{Q}$. 
 
\begin{proposition}[\textbf{Bi-rotational self-shrinkers from weighted geodesics in $\mathcal{Q}$}] \label{MCV equality} 
Let $\alpha<0$ be a constant, let $M_1$ and $M_2$ be positive integers, and let  $\gamma(s) = (x(s), y(s))$, $s \in I$, be an immersed curve in the positive quadrant $\mathcal{Q}$. The following conditions are equivalent. 
\begin{itemize}

\item[1.] The bi-rotational hypersurface
\[
{\Sigma}^{M_{1}+M_{2}+1}   = \left\{  \; \mathbf{X} = ( x(s)  {\mathbf{p}_{1}} , y(s) {\mathbf{p}_{2}}   ) \in {\mathbb{R}}^{M_{1}+M_{2}+2} \;\; \vert \; \;  s \in I,  \; {\mathbf{p}_{1}}  \in {\mathbb{S}}^{M_{1}}, \; {\mathbf{p}_{2}}  \in {\mathbb{S}}^{M_{2}} \; \right\}
\]
is a self-shrinker, satisfying ${\Delta}_{ {\mathbf{g}}_{{}_{\Sigma}}  }  \mathbf{X} = \alpha \mathbf{X}^{\bot}$.

\item[2.] The profile curve $\gamma(s) = (x(s),y(s))$ is a weighted geodesic in the positive quadrant $\mathcal{Q}$ equipped with the density 
\[
 x^{M_1} y^{M_2} e^{\, \alpha \frac{x^2 + y^2}{2}}.
\]

\item[3.] The profile curve  $\gamma(s) = (x(s), y(s))$ satisfies the geodesic equation 
\begin{equation}
\label{bgeo:eqn}
\frac{x' y'' - x'' y'}{x'^2 + y'^2} = - \left( \frac{M_1}{x}  + \alpha x \right) y' + \left( \frac{M_2}{y}  + \alpha y \right) x'.
\end{equation}
\end{itemize}
\end{proposition}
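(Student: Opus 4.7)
The plan is to split the three-way equivalence into two steps: derive the Euler--Lagrange equation for a weighted arclength functional on $\mathcal{Q}$ to see $(2)\Leftrightarrow(3)$, and then observe that the Gaussian area of a bi-rotational $\Sigma$ is, up to a positive constant, exactly that weighted length of its profile curve, which combined with the variational characterization of self-shrinkers from Section~\ref{variational eqns} yields $(1)\Leftrightarrow(2)$.

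For $(2)\Leftrightarrow(3)$, set
\[
\Phi(x,y):=x^{M_1}y^{M_2}e^{\alpha(x^2+y^2)/2},
\]
so that condition (2) is the assertion that $\gamma$ is critical for $L[\gamma]=\int\Phi(x,y)\sqrt{x'^2+y'^2}\,ds$. The standard first variation calculation for such a weighted length functional gives the Euler--Lagrange equation
\[
\kappa_\gamma=\nabla(\log\Phi)\cdot\mathbf{n},
\]
where $\mathbf{n}=(-y',x')/\sqrt{x'^2+y'^2}$ is the Euclidean unit normal and $\kappa_\gamma=(x'y''-x''y')/(x'^2+y'^2)^{3/2}$ is the signed Euclidean curvature. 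Since $\nabla\log\Phi=\bigl(\tfrac{M_1}{x}+\alpha x,\ \tfrac{M_2}{y}+\alpha y\bigr)$, taking the dot product with $\mathbf{n}$ and clearing a single factor of $\sqrt{x'^2+y'^2}$ reproduces exactly (\ref{bgeo:eqn}).

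For $(1)\Leftrightarrow(2)$, parameterize $\Sigma$ by $(s,\mathbf{p}_1,\mathbf{p}_2)$. The three tangential directions (profile and the two sphere factors) are mutually orthogonal, so the induced metric is block diagonal,
\[
g_\Sigma=(x'^2+y'^2)\,ds^2+x(s)^2\,g_{\mathbb{S}^{M_1}}+y(s)^2\,g_{\mathbb{S}^{M_2}},
\]
giving $dvol_\Sigma=x^{M_1}y^{M_2}\sqrt{x'^2+y'^2}\,ds\,dvol_{\mathbb{S}^{M_1}}\,dvol_{\mathbb{S}^{M_2}}$ and the factorization
\[
\int_\Sigma e^{\alpha|\mathbf{X}|^2/2}\,dvol_\Sigma=|\mathbb{S}^{M_1}|\,|\mathbb{S}^{M_2}|\,L[\gamma].
\]
Since $\Sigma$ is a self-shrinker iff it is critical for Gaussian area (Section~\ref{variational eqns}), and the bi-rotational hypersurfaces are exactly the fixed loci of the compact action of $O(M_1+1)\times O(M_2+1)$ on $\mathbb{R}^{M_1+M_2+2}$, Palais's principle of symmetric criticality lets us test criticality against bi-rotational variations only, and by the factorization above this is precisely criticality of $L[\gamma]$. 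A purely computational alternative that avoids symmetric criticality is to work directly from the scalar self-shrinker equation (\ref{SS_eqn2}): choosing the unit normal $\mathbf{N}=(-y'\mathbf{p}_1,x'\mathbf{p}_2)$ and computing the shape operator in arclength parameterization, one finds principal curvatures $x'y''-x''y'$ along the profile together with $y'/x$ (multiplicity $M_1$) and $-x'/y$ (multiplicity $M_2$) along the sphere factors, while $\mathbf{X}\cdot\mathbf{N}=yx'-xy'$, so $H=\alpha\mathbf{X}\cdot\mathbf{N}$ rearranges to (\ref{bgeo:eqn}). I expect no genuine obstacle beyond bookkeeping: the only points demanding attention are the sign conventions for $H$ and $\mathbf{N}$, and the identity $\langle\partial_{u_i}\partial_{u_j}\mathbf{p}_k,\mathbf{p}_k\rangle=-(g_{\mathbb{S}^{M_k}})_{ij}$ that supplies the correct spherical contribution to the second fundamental form.
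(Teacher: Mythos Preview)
Your argument is correct. The paper does not supply a proof of this proposition; it is stated as a reduction parallel to the rotational case in Section~\ref{examples}, where the first variation formula is simply invoked to identify rotational self-shrinkers with geodesics for $g_{Ang}$. Your proof fills in exactly what that sketch would demand: the Euler--Lagrange computation for the weighted length gives $(2)\Leftrightarrow(3)$, and the factorization of the Gaussian area together with the variational characterization of Section~\ref{variational eqns} gives $(1)\Leftrightarrow(2)$. The one genuine point to be careful about is the direction $(2)\Rightarrow(1)$, where criticality among bi-rotational competitors must be upgraded to criticality among all variations; you handle this correctly, either via Palais's symmetric criticality or, more concretely, via the direct computation of $H$ from the block-diagonal metric, which shows that the scalar self-shrinker equation~(\ref{SS_eqn2}) on $\Sigma$ is literally equation~(\ref{bgeo:eqn}) on $\gamma$. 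The latter route is the cleaner one here and makes the equivalence $(1)\Leftrightarrow(3)$ independent of any variational machinery.
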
 

As in the rotational symmetry case, the shooting method can be used to explore profile curves of bi-rotational self-shrinkers, and the existence of a closed self-shrinker with bi-rotational symmetry is equivalent to finding a solution to~(\ref{bgeo:eqn}) that is closed or intersects the boundary of $\mathcal{Q}$ orthogonally at two points.

\subsection{The shooting method for bi-rotational self-shrinkers}

In this section we adopt the normalization $\alpha= - \frac{1}{2}$, and we make the additional assumption that $ M_1 = M_2 = M \geq 1$. Then, the geodesic equation (\ref{bgeo:eqn}) becomes
\begin{equation}
\label{bgeo:eqn2}
\frac{x' y'' - x'' y'}{x'^2 + y'^2} = - \left( \frac{M}{x}  -\frac{1}{2}x \right) y' + \left( \frac{M}{y}  -\frac{1}{2} y \right) x'.
\end{equation}
Notice that for a geodesic $y=y(x)$ written as a graph over the $x$-axis, we have the following equation which resembles equation~(\ref{ode:graph}) for self-shrinkers with rotational symmetry:
\begin{equation}
\label{bgeo:eqn3}
\frac{y''}{1 + y'^2} =  - \left( \frac{M}{x} - \frac{1}{2}x \right) y' + \left( \frac{M}{y}  -\frac{1}{2} y \right).
\end{equation}

Examples of bi-rotational self-shrinkers in ${\mathbb{R}}^{2M+2}$ include the minimal Clifford cone, round cylinders $\mathbb{S}^{M} \times \mathbb{R}^{M+1}$ of radius $\sqrt{2M \,}$ with `axis' through the origin, and the round sphere of radius $\sqrt{2 (2M + 1)}$ centered at the origin. We have the following profile curves for these examples:
\begin{itemize}
\item[1.] \emph{Minimal Clifford cone}:  $y = x$, 
\item[2.] \emph{Round cylinders}: $y = \sqrt{2 M\,}$ and $x = \sqrt{2M\,}$,
\item[3.] \emph{Round sphere}: $x^2 + y^2 = 2 (2M + 1)$.
\end{itemize}

 \begin{figure}[H]
 \centering
 \includegraphics[height=4.785cm]{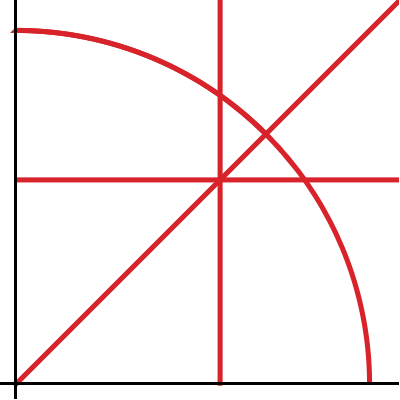}
   \caption{Geodesics corresponding to the Clifford cone, two round cylinders, and the round sphere.}
 \end{figure}

The assumption $M_1 = M_2 = M$ introduces an additional symmetry as the geodesic equation (\ref{bgeo:eqn2}) is now symmetric with respect to reflections about the line $y=x$. Consequently, a closed geodesic can be constructed by finding a geodesic arc that intersects the line $y=x$ orthogonally at two points.

Reparametrizing a solution to (\ref{bgeo:eqn2}) so that the curve satisfies $x'(s)^2 + y'(s)^2 = 1$, shows that the tangent angle $\alpha(s)$ is a solution to the system
\begin{equation} 
\label{bi:SSEq}
\left\{
\begin{array}{lll}
x'(s) & = & \cos \alpha(s), \\
y'(s) & = & \sin \alpha(s), \\
\alpha'(s) & = & - \left( \frac{M_1}{x(s)} - \frac{x(s)}{2} \right) \sin \alpha(s) + \left( \frac{M_2}{y(s)} - \frac{y(s)}{2} \right) \cos \alpha(s).
\end{array}
\right.
\end{equation}
This leads to the shooting problem: 
\begin{equation} 
\label{bi:shoot}
T[t] \textrm{ is the solution to~(\ref{bi:SSEq}) with } T[t] (0) = (t,t) \textrm{ and } T[t]' (0) = \left(-\frac{1}{\sqrt{2}}, \frac{1}{\sqrt{2}}\right).
\end{equation}
As described in Section~\ref{Section 5.3}, the existence of a closed geodesic can be established by showing that the above shooting problem exhibits the two types of behavior illustrated in Figure~\ref{bishoot}.

 \begin{figure}[H]
 \centering
 \includegraphics[height=4.785cm]{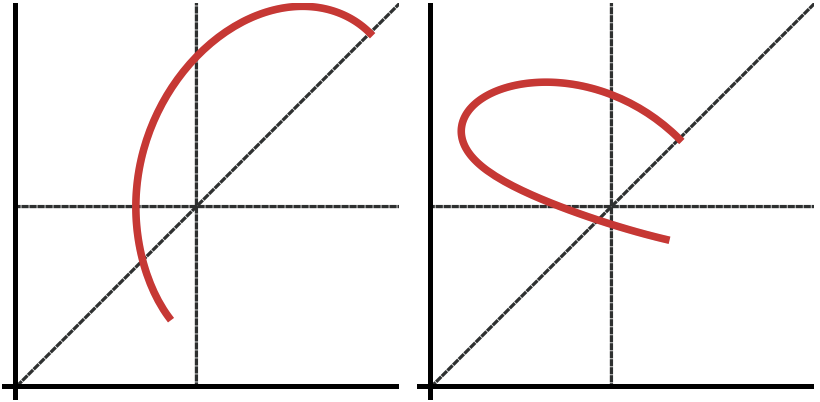}
   \caption{}
   \label{bishoot}
 \end{figure}

McGrath has posted a preprint~\cite{McGrath 2015} which shows that $T[t]$ exhibits the behavior illustrated on the left of Figure~\ref{bishoot}, for large values of the parameter $t$. In this preprint, McGrath presents a shooting method argument similar to the ones used to construct closed self-shrinkers with rotational symmetry in~\cite{Angenent 1992} and~\cite{Drugan 2015}. The approach is to analyze the geodesics $T[t]$ as $t$ decreases from $\infty$ and deduce through continuity that $T[t_*]$ is a closed geodesic for some $t_*>0$. We note that the presence of the additional term in (\ref{bgeo:eqn2}), when compared to (\ref{geo:eqn}), makes the analysis of bi-rotational geodesics more complicated than their rotational counterparts.

\subsection{Numerical approximations for profile curves of bi-rotational self-shrinkers in ${\mathbb{R}}^{4}$}
\label{birot examples}

In this section we present numerical approximations of \emph{symmetric} profile curves for closed self-shrinkers with bi-rotational symmetry in the case where $\alpha = - \frac{1}{2}$ and $M_1 = M_2 = 1$. We used Wolfram Mathematica to plot numerical solutions to the system (\ref{bi:SSEq}) for various initial values in the shooting problem (\ref{bi:shoot}). 

\begin{enumerate}

\item[1.] \emph{Embedded ${\mathbb{T}}^{3}$ self-shrinker in ${\mathbb{R}}^{4}$}: A detailed analysis of (\ref{bgeo:eqn3}), adapted from the crossing arguments in \cite{Drugan 2015} and \cite{Drugan Kleene 2017}, confirms there is a $t_2 > 0$ so that the geodesic $T[t]$ has the initial shape illustrated on the left of Figure~\ref{bishoot} for $t \geq t_2$. Numerics show that there is a $t_1 > \sqrt{6}$ so that $T[t_1]$ has the following initial shape:

 \begin{figure}[H]  
 \centering
 \includegraphics[height=4.785cm]{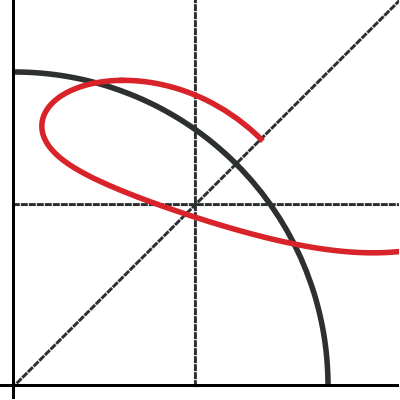}
   \caption{}  \label{ray lower} 
 \end{figure}

The framework of Remark~\ref{rm:alt} can now be implemented to prove the existence of a simple, closed geodesic. However, as discussed in Section~\ref{Section 5.3}, additional details on the behavior of the geodesics $T[t]$, $t \in [t_1,t_2]$ are needed to run the continuity argument, and the existence of a geodesic with the shape of $T[t_1]$ in Figure~\ref{ray lower} still needs to be established. Here is a numerical approximation of a simple, closed geodesic:

 \begin{figure}[H]  
 \centering
 \includegraphics[height=4.785cm]{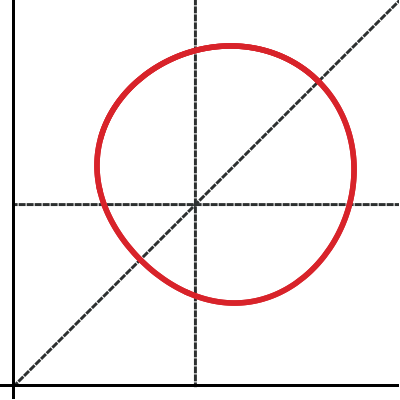}
   \caption{}  \label{embedded T3} 
 \end{figure}

\bigskip

\item[2.] \emph{Three immersed ${\mathbb{T}}^{3}$ self-shrinkers in ${\mathbb{R}}^{4}$}:  

 \begin{figure}[H]  
 \centering
 \includegraphics[height=4.785cm]{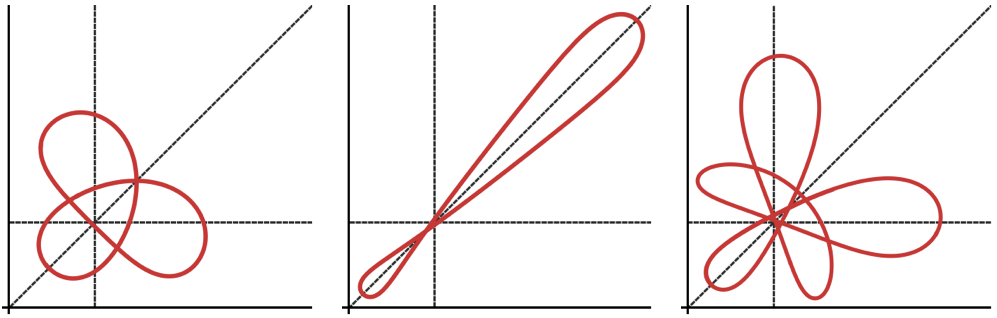}
   \caption{}  \label{immersed T3} 
 \end{figure}

\bigskip

\item [3.] \emph{Two immersed ${\mathbb{S}}^{3}$ self-shrinkers in ${\mathbb{R}}^{4}$}: 

 \begin{figure}[H]     
 \centering
 \includegraphics[height=4.785cm]{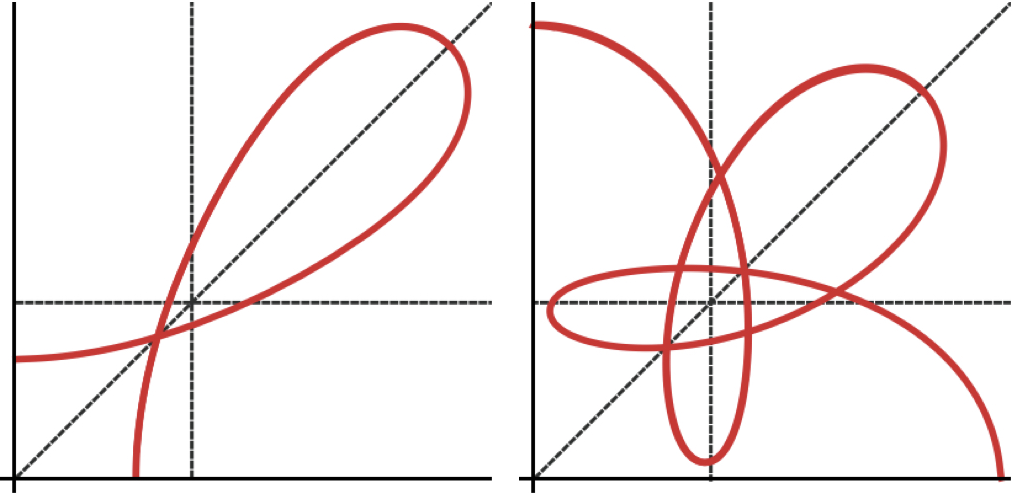}
   \caption{}  \label{immersed S3}
 \end{figure}

\end{enumerate}


 \bigskip
 
\section{Open Problems}  \label{Section 7} 

We end the survey with a list of open problems for closed self-shrinkers. One may also raise similar questions for the existence and uniqueness of closed self-shrinkers in higher dimensions.


\bigskip

\begin{problem} 
Is Angenet's rotational torus the only closed, embedded, genus 1 self-shrinker in $\mathbb{R}^3$?
\end{problem}

The uniqueness of Angenent's torus is even unknown in the class of self-shrinkers with rotational symmetry. The embeddedness assumption is necessary due to the immersed examples constructed in~\cite{Drugan Kleene 2017}. Very recently, motivated by Lawson's Theorem \cite{Lawson 1970} for embedded minimal surfaces in the three-dimensional sphere, Mramor and Wang \cite[Corollary 1.2]{Mramor Wang 2017} exploited the variational characterization of self-shrinkers (Section \ref{variational eqns}) to show that an embedded self shrinking 
torus in $\mathbb{R}^3$ is un-knotted. 


\bigskip

\begin{problem}
Is the round sphere centered at the origin the only embedded $\mathbb{S}^{3}$ self-shrinker in $\mathbb{R}^{4}$?
\end{problem}

The embeddedness assumption is necessary due to the existence of immersed and non-embedded $\mathbb{S}^3$ self-shinkers constructed in~\cite{Drugan 2015, Drugan Kleene 2017}. Uniqueness is known in the rotational case~\cite{Drugan thesis, Kleene Moller 2014}.


\bigskip

\begin{problem}
Existence and uniqueness of closed self-shrinkers with bi-rotational symmetry.
\end{problem}

Are there bi-rotational self-shrinkers for the numeric profile curves presented in Section~\ref{birot examples}? Are there any uniqueness results for these or other bi-rotational examples?
 
  \bigskip
  \bigskip
  \bigskip
     



\begin{thebibliography}{} 

\bibitem{Abresch Langer 1986}
U. Abresch, J. Langer, \emph{The normalized curved shortening flow and homothetic solutions}, 
 \textbf{J. Differential Geom.} 23 (1986), 175--196.

\bibitem{Abresch 1987} 
U. Abresch, \emph{Constant mean curvature tori in terms of elliptic functions}, 
\textbf{J. Reine Angew. Math.} 374 (1987), 169--192. 

\bibitem{Abresch Rosenberg 2004}
U. Abresch. H. Rosenberg, {\emph{A Hopf differential for constant mean curvature surfaces in ${\mathbb{S}}^{2} \times \mathbb{R}$ and ${\mathbb{H}}^{2} \times \mathbb{R}$}}, \textbf{Acta Math.}  193 (2004), no. 2, 141--174.
 
\bibitem{Alencar Barros Palmas Reyes Santos 2005}
 H. Alencar, A. Barros, O. Palmas, J. G. Reyes, W. Santos, 
 \emph{$O(m) \times O(n)$-invariant minimal hypersurfaces in ${\mathbb{R}}^{m+n}$}, 
 \textbf{Ann. Global Anal. Geom.} 27 (2005), 179--199.
 
\bibitem{Andrews Bryan 2011a}
 B. Andrews, P. Bryan, \emph{A comparison theorem for the isoperimetric profile under curve-shortening flow}, 
 \textbf{Comm. Anal. Geom.} 19 (2011), no. 3, 503--539. 
 
\bibitem{Andrews Bryan 2011b}
 B. Andrews, P. Bryan, \emph{Curvature bound for curve shortening flow via distance comparison and a direct 
 proof of Grayson's theorem}, \textbf{J. Reine Angew. Math.} 653 (2011), 179--187.

\bibitem{Angenent 1990}
S. Angenent, \emph{Parabolic equations for curves on surfaces. {I}.
  Curves with {$p$}-integrable curvature}, \textbf{Ann. of Math.} (2), 132 (1990), 451--483.

\bibitem{Angenent 1991}
S. Angenent \emph{Parabolic equations
  for curves on surfaces. {II}. Intersections, blow-up and generalized
  solutions}, \textbf{Ann. of Math.} (2), 133 (1991), 171--215.
 
\bibitem{Angenent 1992}
S. Angenent, \emph{Shrinking doughnuts}, Nonlinear diffusion equations and their equilibrium states, 3 (Gregynog, 1989), 
21--38, Progr. Nonlinear Differential Equations Appl. 7, Birkh\"{a}user, Boston, 1992.

\bibitem{Bombieri De Giorgi Giusti 1969}
 E. Bombieri, E. De Giorgi, E. Giusti,  \emph{Minimal cones and the Bernstein problem},  
\textbf{Invent. Math.} 7 (1969), 243--268. 

\bibitem{Bryant 2011} 
R. L. Bryant, \emph{Complex analysis and a class of Weingarten surfaces}, arXiv preprint arXiv:1105.5589 (2011).

\bibitem{Brendle 2016}
S. Brendle, \emph{Embedded self-similar shrinkers of genus $0$}, \textbf{Annals of Math.}  183 (2016),  715--728.

\bibitem{Chang 2017}
J.-E. Chang, \emph{One dimensional solutions of the $\lambda$-self shrinkers}, 
\textbf{Geom. Dedicata.} 189 (2017), 97--112.
 
\bibitem{Colding Minicozzi Pedersen 2015}
T. H. Colding, W. P. Minicozzi II, E. K. Pedersen, \emph{Mean curvature flow}, \textbf{Bull. Amer. Math. Soc. (N.S.)} 52 (2015), no. 2, 297--333.
 
\bibitem{del Pino Kowalczyk Wei 2011}
M. del Pino, M. Kowalczyk, J. Wei,  \emph{On De Giorgi's conjecture in dimension $N \geq 9$}, 
\textbf{Ann. of Math.} (2) 174 (2011), no. 3, 1485--1569. 

\bibitem{Drugan 2015}
G. Drugan, \emph{An immersed $S^2$ self-shrinker},
\textbf{Trans. Amer. Math. Soc.} 367 (2015), no. 5, 3139--3159.

\bibitem{Drugan thesis}
G. Drugan, \emph{Self-shrinking solutions to mean curvature flow}. Ph.D. thesis, University of Washington, 2014.

\bibitem{Drugan Nguyen 2016}
G. Drugan, X. H. Nguyen, \emph{Mean curvature flow of entire graphs evolving away from the heat flow}, 
\textbf{Proc. Amer. Math. Soc.}  145 (2017), no. 2, 861--869. 
 
\bibitem{Drugan Nguyen preprint}
G. Drugan, X. H. Nguyen, \emph{Shrinking doughnuts via variational methods},  arXiv preprint arXiv:1708.08808 (2017).
 
\bibitem{Drugan Kleene 2017}
G. Drugan, S. J. Kleene, \emph{Immersed self-shrinkers},
\textbf{Trans. Amer. Math. Soc.} 369 (2017), no. 10, 7213--7250.

\bibitem{Ecker 2004} 
K. Ecker, \emph{Regularity theory for mean curvature flow}, 
Progress in Nonlinear Differential Equations and their Applications, 57. Birkh\"{a}user, Boston, Inc., Boston, MA, 2004.
 
\bibitem{Epstein Weinstein 1987}
C. L. Epstein, M. I. Weinstein, \emph{A stable manifold theorem for curve shortening equations},
\textbf{Comm. Pure Appl. Math.} 40 (1987), no. 1, 119--139.
 
\bibitem{Fraser Schoen 2015} 
A. Fraser,  R. Schoen, \emph{Uniqueness theorems for free boundary minimal disks in space forms}, 
\textbf{Int. Math. Res. Not.}  2015, no. 17, 8268--8274.

\bibitem{Gage 1990}
M. Gage, \emph{Deforming curves on convex surfaces to simple closed geodesics}, 
\textbf{Indiana Univ. Math. J.} 39 (1990), 1037--1059.
 
\bibitem{Gage Hamilton 1986}
 M. Gage, R. S. Hamilton, \emph{The heat equation shrinking convex plane curves}, 
 \textbf{J. Differential Geom.} 23 (1986), 69--96.
 
 \bibitem{Gimeno 2015} 
 V. Gimeno, \emph{Isoperimetric inequalities for submanifolds. Jellett-Minkowski's formula revisited}, 
 \textbf{Proc. Lond. Math. Soc.} (3) 110 (2015), no. 3, 593--614. 
  
\bibitem{Grayson 1987}
 M. A. Grayson, \emph{The heat equation shrinks embedded plane curves to round points}, 
 \textbf{J. Differential Geom.} 26 (1987), 285--314.

\bibitem{Halldorsson 2012}
H. P. Halldorsson, \emph{Self-similar solutions to the curve shortening flow}, 
\textbf{Trans. Amer. Math. Soc.} 364 (2012), no. 10, 5285--5309.

\bibitem{Halldorsson 2015}
H. P. Halldorsson, \emph{Self-similar solutions to the mean curvature flow in the Minkowski plane ${\mathbb{R}}^{1,1}$}, 
\textbf{J. Reine Angew. Math.} 704 (2015), 209--243. 
 
\bibitem{Hamilton 1993} 
R. S. Hamilton, 
\emph{Monotonicity formulas for parabolic flows on manifolds}, \textbf{Comm. Anal. Geom.} 1 (1993), no 1. 127--137.

\bibitem{Hopf 2003}
H. Hopf, \emph{Differential geometry in the large: seminar lectures New York University 1946 and Stanford University 1956}. Vol. 1000. Springer, 2003.
 
\bibitem{Hoffman 1987}
D. Hoffman, \emph{The computer-aided discovery of new embedded minimal surfaces}, 
\textbf{Math. Intelligencer}, 9 (1987), no. 3, 8--21. 

\bibitem{Hsiang Teng Yu 1983}
 W. Y. Hsiang, Z. H. Teng, W.C. Yu, {\emph{New examples of constant mean curvature immersions of spheres into Euclidean space}}, 
 \textbf{Ann. of Math.} 117 (1983), no. 3, 609--625.
  
\bibitem{Hsiang 1982}
 W. Y. Hsiang, \emph{Generalized rotational hypersurfaces of constant mean curvature in the 
 Euclidean spaces. I.} \textbf{J. Differential Geom.} 17 (1982), no. 2, 337--356. 
 
 \bibitem{Hsiung 1956} 
 C. C. Hsiung, \emph{Some integral formulas for closed hypersurfaces in Riemannian space},  
 \textbf{Pacific J. Math.} 6 (1956), 291--299.
 
 \bibitem{Hsiung 1959}
 C. C. Hsiung, \emph{Some integral formulas for closed hypersurfaces}, 
 \textbf{Math. Scand.} 2 (1959), 286--294.
   
\bibitem{Huisken 1984} 
G. Huisken, \emph{Flow by mean curvature of convex surfaces into spheres}, \textbf{J. Differential Geom.} 20 (1984), no. 1, 237--266.

\bibitem{Huisken 1990} G. Huisken, 
\emph{Asymptotic behavior for singularities of the mean curvature flow}, \textbf{J. Differential Geom.}  31 (1990), no. 1, 285--299.

\bibitem{Huisken 1998}
 G. Huisken, \emph{A distance comparison principle for evolving curves}, 
 \textbf{Asian J. Math.} 2 (1998), no. 1, 127--133.
 
\bibitem{Ilmanen 1998} Ilmanen, 
{\emph{Lectures on Mean Curvature Flow and Related Equations}}, 1998.

\bibitem{Kapouleas Kleene Moller 2011}
N. Kapouleas, S. J.  Kleene, N. M. M{\o}ller,  \emph{Mean curvature self-shrinkers of high genus: non-compact examples},
arXiv preprint arXiv:1106.5454 (2011), to appear in \textbf{J. Reine Angew. Math.}

\bibitem{Kenmotsu 1979}
K. Kenmotsu, \emph{Weierstrass formula for surfaces of prescribed mean curvature}, 
\textbf{Math. Ann.} 245 (1979), no. 2, 89--99. 

\bibitem{Kleene Moller 2014}
S. J.  Kleene, N. M. M{\o}ller, \emph{Self-shrinkers with a rotational symmetry}, 
\textbf{Trans. Amer. Math. Soc.}  366 (2014), no. 8, 3943--3963.

 \bibitem{Kwong 2016}
 K.-K. Kwong, \emph{An extension of Hsiung-Minkowski formulas and some applications},
 \textbf{J. Geom. Anal.} 26 (2016), no. 1, 1--23.   
  
\bibitem{Lawson 1970}
 H. B. Lawson, \emph{The unknottedness of minimal embeddings}, 
 \textbf{Invent. Math.} 11 (1970), 183--187. 
 
\bibitem{Magni Mantegazza 2014}
A. Magni, C. Mantegazza,  \emph{A note on Grayson's theorem},  
\textbf{Rend. Semin. Mat. Univ. Padova}, 131 (2014), 263--279. 

\bibitem{McGrath 2015}
P. McGrath, \emph{Closed mean curvature self-shrinking surfaces of generalized rotational type}, 
arXiv preprint arXiv:1507.00681 (2015).

\bibitem{Morgan 2009}
F. Morgan, \emph{Geometric Measure Theory. A Beginner's Guide}, fourth ed. Elsevier/Academic Press, Amsterdam, 2009.   

\bibitem{Moller 2011}
N. M. M{\o}ller, \emph{Closed self-shrinking surfaces in $\mathbb {R}^ 3$ via the torus}, arXiv preprint arXiv:1111.7318 (2011).

\bibitem{Montiel 1999}
S. Montiel, \emph{Unicity of constant mean curvature hypersurfaces in some Riemannian manifolds},
\textbf{Indiana Univ. Math. J.} 48 (1999), no. 2, 711--748.

\bibitem{Mramor Wang 2017}
A. Mramor, S. Wang, \emph{On the topological rigidity of compact self shrinkers in ${\mathbb{R}}^{3}$}, 
arXiv preprint arXiv:1708.06581 (2017).

\bibitem{Mullins 1956}
W. W. Mullins, \emph{Two Dimensional Motion of Idealized Grain Boundaries},
 \textbf{Journal of Applied Physics}, 27 (1956), no. 8, 900--904.

\bibitem{Wente 1986} 
H. Wente, \emph{Counterexample to a conjecture of H. Hopf}, 
\textbf{Pacific J. Math.} 121 (1986), no. 1, 193--243.

\end{thebibliography}
\end{document}